\documentclass{amsart}

\usepackage{a4wide}
\usepackage{mathtools}
\usepackage{amssymb,amsmath,amsfonts,amsthm}
\usepackage{paralist}
\usepackage{latexsym}
\usepackage{mathbbol}
\usepackage{color}

\usepackage{placeins}

\setcounter{MaxMatrixCols}{20}

\usepackage{tikz}

\newcommand{\N}{\mathbb{N}}
\newcommand{\R}{\mathbb{R}}
\newcommand{\0}{\mathbb{0}}

\newcommand{\E}{\mathcal{E}}

\newcommand{\set}[2]{\left\{#1\vphantom{#2}\right.\;\left|\;\vphantom{#1}#2\right\}}
\newcommand{\ext}[2]{{\mathsf {Ext}}^{#1}_{#2}}
\newcommand{\zvek}[1]{\begin{matrix} \textendash\textendash & \!\!\! {#1} \!\!\! & \textendash\textendash \end{matrix}}
\newcommand{\svek}[1]{\begin{matrix} | \\ {#1} \\ | \end{matrix}}
\newcommand{\zsvek}[1]{\begin{smallmatrix} \textendash\textendash &  {#1}  & \textendash\textendash \end{smallmatrix}}
\newcommand{\ssvek}[1]{\begin{smallmatrix} | \\ {#1} \\ | \end{smallmatrix}}
\newcommand{\ID}{{\mathsf {Id}}}

\makeatletter
\renewcommand*\env@matrix[1][*\c@MaxMatrixCols c]{%
  \hskip -\arraycolsep
  \let\@ifnextchar\new@ifnextchar
  \array{#1}}
\makeatother

\DeclareMathOperator{\interior}{\mathsf{int}}
\DeclareMathOperator{\conv}{\mathsf{conv}}

\DeclareMathOperator{\vol}{\mathsf{vol}}
\DeclareMathOperator{\pos}{\mathsf{pos}}
\DeclareMathOperator{\corank}{\mathsf{corank}}
\DeclareMathOperator{\Iso}{\mathsf{Iso}}
\DeclareMathOperator{\GL}{\mathsf{GL}}
\DeclareMathOperator{\Cay}{\mathsf{Cay}}

\newtheorem{theorem}{Theorem}[section]
\newtheorem{lemma}[theorem]{Lemma}
\newtheorem{proposition}[theorem]{Proposition}
\newtheorem{corollary}[theorem]{Corollary}
\theoremstyle{definition}
\newtheorem{example}[theorem]{Example}
\newtheorem{remark}[theorem]{Remark} 
\newtheorem{definition}[theorem]{Definition}

\title{Polyhedral volume ratios, Izmestiev's Colin de Verdi{\`e}re matrices and Spectral Gaps}

\author{Ioannis Ivrissimtzis, Carsten Lange, Shiping Liu, Norbert Peyerimhoff}
\date{\today}

\begin{document}

\maketitle

\begin{abstract}
   We present a relation between volumes of certain lower dimensional simplices associated to a full-dimensional primal and polar dual polytope in $\R^k$. 
   We then discuss an applications of  this relation to a geometric construction of a Colin de Verdi{\`e}re matrix by Ivan Izmestiev. In the second part of the paper, we introduce a variation of vertex transitive polytopes, translate their associated Colin de Verdi{\`e}re matrices into random walk matrices, and investigate extremality properties of the spectral gaps of these random walk matrices 
   in two concrete examples -- permutahedra of Coxeter groups and polytopes associated to the pure rotational tetrahedral group -- where maximal spectral gaps correspond to equilateral polytopes. 
\end{abstract}

\tableofcontents

\section{Introduction}
\label{sec:intro}

In the following subsections, we present our main result, a polyhedral volume ratio formula (Theorem~\ref{thm:main1}), discuss an application to a particular Colin de Verdi\`ere matrix (Corollary~\ref{cor:altIavnCvD}), and give an overview over the structure and content of this paper. 


\subsection{A volume ratio formula}

We start with some basic facts about polar duals (see, e.g., \cite[Section 3.4]{G-03} or \cite[Section 2.3]{Z-95}).
Let $C \subset \R^k$ be a (bounded or unbounded) $k$-dimensional polyhedron containing the origin $\0$ in its interior $\interior(C)$. The polar-dual of $C$, denoted by $P^* \subset \R^k$, is defined as follows:
$$ C^* = \set{p \in \R^k}{\langle p, v \rangle \le 1 \text{ for all $v \in C$}}. $$
Note that $C^*$ is not empty since it contains the origin $\0$. In fact, $C^*$ is always a polytope. In the case that $C$ is unbounded, $\0$ becomes a vertex of its polar-dual $C^*$. Moreover, if $r \subset \R^k$ is a ray generated by $v \in \R^k$ and completely contained in an unbounded polyhedron $C$, then the polar-dual $C^*$ is contained in the closed half space $H = \set{x \in \R^k}{\langle v,x \rangle \le 0}$.

\goodbreak

\begin{example} \label{example:cone-dual} $ $ 
  \begin{compactenum}[a)] 
    \item An example of a bounded polyhedron and its polar-dual is a cube and an octahedron: 
        \[
            Q = \set{(x,y,z) \in \R^3}{-2 \le x,y,z \le 2}
        \]
        and
        \[
            Q^* = \conv\left\{\pm\tfrac{1}{2}e_1, \pm\tfrac{1}{2}e_2, \pm\tfrac{1}{2}e_3\right\},
        \]
        where $e_i$ denote the standard basis vectors of $\R^3$.
    \item An example of an unbounded polyhedron and its polar-dual is a translate of an octant with apex $v_0=(-2,-2,-2)$ and a simplex (with the origin as one vertex):
        \[
            C = \set{(x,y,z) \in \R^3}{x,y,z \ge -2} = v_0 + \pos(e_1,e_2,e_3)
        \]
        and
        \[
            C^* = \conv\left\{\0,\ \tilde q_1 = -\tfrac{1}{2}e_1,\ \tilde q_2 = -\tfrac{1}{2}e_2,\ \tilde q_3 = -\tfrac{1}{2}e_3\right\}.
        \]
    In order to obtain the vertices $\tilde q_j$ of $C^*$, we first compute facet normals $q_j$ of $C$ using the vector products
    $$ q_1 = e_2 \times e_3, \quad q_2 = - e_1 \times e_3, \quad \text{and} \quad q_3 =  e_1 \times e_2, $$
    and rescale them according to polarity:
    \[
    	\tilde q_j = \frac{1}{\langle v_0, q_j \rangle} q_j. 
    \]
    In the general case $C = v_0 + \pos(\varepsilon_1,\dots,\varepsilon_k) \subset \R^k$, facet normals $q_j$ and $\tilde q_j$ are computed similarly via the generalized multilinear vector product
    \begin{equation} \label{eq:qj} 
    	q_j = (-1)^{j-1} \varepsilon_1 \times \cdots \times \widehat \varepsilon_j \times \cdots \times \varepsilon_k
		\qquad\text{and}\qquad
		\tilde q_j = \frac{1}{\langle v_0, q_j \rangle} q_j,
    \end{equation}
    where $\widehat \epsilon_j$ means that this factor is omitted.
  \end{compactenum}
\end{example}

Our main theorem in this section provides a ratio between volumes of two polytopes $S_I$ and~$\Delta_J$, which are associated to a cone $C$ and its polar-dual, respectively.

\begin{theorem} \label{thm:main1}
  Let $C \subset \R^k$ be the following $k$-dimensional simplicial cone with apex $v_0 \in \R^k$ and extremal rays generated by $\varepsilon_1,\dots,\varepsilon_k$:
  \begin{eqnarray*} 
  	C &=& v_0 + \pos(\varepsilon_1,\dots,\varepsilon_k) \\
  		&=& \set{v \in \R^k}{\langle v,\tilde q_j \rangle \leq \langle v_0,\tilde q_j \rangle \text{ for $j \in [k]$} }
  \end{eqnarray*}
  with the outer facet normals $\tilde q_j$ in \eqref{eq:qj}. Moreover, we assume $\0 \in \interior(C)$. Let $C^* \subset \R^k$ be the polar-dual of $C$:
  \[
    C^* = \conv\left(\{\0\}\cup\set{\tilde q_j}{j\in[k]}\right).
  \]  
  For $0 \le m < k$ consider $I \in {{[k]} \choose m}$
  and its complement $J := [k] \setminus I$ with $\ell := |J|$. Define the $(m+1)$-simplex $S_I \subset C$ and the 
  $(\ell-1)$-simplex $\Delta_J \subset C^*$ as:
  $$ S_I := \conv\left( \{ \0, v_0 \} \cup \{ v_0 + \varepsilon_i \mid i \in I \}\right)
  \qquad\text{and}\qquad
  \Delta_J := \conv( \{ \tilde q_j \mid j \in J \}).
  $$
  Then we have
  \begin{equation} \label{eq:main-formula}
  \frac{\vol_{\ell-1}(\Delta_J)}{\vol_{m+1}(S_I)} = \frac{(m+1)!}{(l-1)!} \cdot \frac{\vol_k(\E)^{\ell-1}}{\prod_{j \in J} |\langle v_0, q_j \rangle|},  
  \end{equation}
  where $\E$ is the $k$-dimensional parallel-epiped spanned by $\varepsilon_1,\dots,\varepsilon_k$.
\end{theorem}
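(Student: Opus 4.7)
My plan is to compute both volumes via the exterior algebra, exploiting a crucial orthogonality between the subspace $V := \mathrm{span}\{\varepsilon_i : i \in I\}$ (of dimension $m$) and the vectors $q_j$ for $j \in J$.

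First I would set up the geometric dictionary. The relations $\langle \varepsilon_i, q_j\rangle = 0$ for $i \neq j$, together with $I \cap J = \emptyset$, imply $q_j \perp V$ for every $j \in J$, so all $\tilde q_j$ with $j \in J$ (and hence $\Delta_J$) lie in $V^\perp$, a subspace of dimension $\ell = k - m$. Decomposing $v_0 = v_0^\parallel + v_0^\perp$ with respect to $\R^k = V \oplus V^\perp$, the defining relation $\langle v_0, \tilde q_j\rangle = 1$ simplifies to $\langle v_0^\perp, \tilde q_j\rangle = 1$, so the affine hull of $\Delta_J$ is the affine hyperplane $\{x \in V^\perp : \langle v_0^\perp, x\rangle = 1\}$ of $V^\perp$, of dimension $\ell - 1$. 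Two useful distances then appear: $h_J := 1/\|v_0^\perp\|$ (from $\0$ to the affine hull of $\Delta_J$) and $d_I := \|v_0^\perp\|$ (from $\0$ to the face $v_0 + V$ of $S_I$ opposite $\0$); they satisfy $d_I h_J = 1$.

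Next I would compute both volumes via wedge products. Setting $\omega_I := \bigwedge_{i\in I}\varepsilon_i$ and expanding (using $v_0 \wedge v_0 = 0$),
\[
(m+1)!\,\vol_{m+1}(S_I) = \Bigl\|v_0 \wedge \bigwedge_{i\in I}(v_0+\varepsilon_i)\Bigr\| = \|v_0 \wedge \omega_I\|.
\]
Splitting $v_0 = v_0^\parallel + v_0^\perp$, the parallel piece wedges with $\omega_I$ to $0$, and $v_0^\perp \perp V$ yields $\|v_0 \wedge \omega_I\| = d_I\,\|\omega_I\|$. Similarly, viewing $\Delta_J$ as the facet opposite $\0$ in the $\ell$-simplex $\widetilde\Delta_J := \conv(\{\0\}\cup\Delta_J) \subset V^\perp$, whose $\ell$-volume equals $\tfrac{1}{\ell!}\|\bigwedge_{j\in J}\tilde q_j\|$, the base-times-height formula with distance $h_J$ gives
\[
(\ell-1)!\,\vol_{\ell-1}(\Delta_J) = \frac{d_I}{\prod_{j\in J}|\langle v_0,q_j\rangle|}\,\Bigl\|\bigwedge_{j\in J} q_j\Bigr\|.
\]

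The remaining and most delicate step is the wedge identity
\[
\Bigl\|\bigwedge_{j\in J} q_j\Bigr\| = \vol_k(\E)^{\ell-1}\cdot\|\omega_I\|.
\]
To prove this, let $E$ and $Q$ denote the matrices with columns $\varepsilon_j$ and $q_j$ respectively; the relations $\langle\varepsilon_i,q_j\rangle = \pm\det(E)\,\delta_{ij}$ read $E^T Q = \pm\det(E)\,\ID$, so the Gram matrix of $\{q_j\}_{j\in J}$ is, up to diagonal sign-conjugation, the $J$-principal submatrix of $\det(E)^2 G^{-1}$ with $G := E^T E$. Its determinant equals $\det(E)^{2\ell}\,\det((G^{-1})_{JJ})$, and the classical Jacobi identity for complementary minors of an inverse gives $\det((G^{-1})_{JJ}) = \det(G_{II})/\det(G) = \|\omega_I\|^2/\vol_k(\E)^2$, which proves the identity after taking square roots. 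Substituting back, the factors $d_I$ and $\|\omega_I\|$ cancel in the ratio $\vol_{\ell-1}(\Delta_J)/\vol_{m+1}(S_I)$, yielding \eqref{eq:main-formula}. The main obstacle is this Jacobi-type wedge identity; sign ambiguities from the generalized cross product in \eqref{eq:qj} are irrelevant once we pass to norms and absolute values.
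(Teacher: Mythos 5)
Your proof is correct, and it takes a genuinely different route from the paper. The paper proceeds purely algebraically: Lemmas~\ref{lem:volume_S_I} and~\ref{lem:volume_Delta_J} compute $\bigl(\vol_{m+1}(S_I)\bigr)^2$ and $\bigl(\vol_{\ell-1}(\Delta_J)\bigr)^2$ as quadratic forms in the vectors $\beta$ and $v_q$ via Gram determinants, Laplace expansion, and the Determinant Lemma~\ref{lem:determinant_lemma}, and then glues the two expressions together using the Extension Lemma~\ref{prop:extension_lemma}, a somewhat heavy bespoke matrix identity about adjugates of restrictions. You instead make explicit the orthogonal splitting $\R^k=V\oplus V^\perp$ with $V=\mathrm{span}\{\varepsilon_i\}_{i\in I}$, which at once shows $\Delta_J\subset V^\perp$ and exposes the reciprocity $d_I\,h_J=1$ between the height of $S_I$ over its base opposite $\0$ and the distance from $\0$ to $\mathrm{aff}(\Delta_J)$ --- a duality that remains implicit in the paper's computations. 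Your wedge computations collapse each volume to a single norm, and the key linear-algebraic input is the classical Jacobi complementary-minor identity $\det\bigl((G^{-1})_{JJ}\bigr)=\det(G_{II})/\det G$, which is a well-known special case hiding inside the paper's Sorted Extension Lemma (it is precisely claim (i) there, with $M=G$); you do not need the full operator-level statement. What the paper's route buys is a reusable Extension Lemma, presented as being of independent interest; what yours buys is a substantially shorter and more geometrically transparent argument. All the steps I checked --- the orthogonality $q_j\perp V$ for $j\in J$, the formulas $(m+1)!\,\vol_{m+1}(S_I)=d_I\|\omega_I\|$ and $(\ell-1)!\,\vol_{\ell-1}(\Delta_J)=\tfrac{d_I}{\prod_{j\in J}|\langle v_0,q_j\rangle|}\|\bigwedge_{j\in J}q_j\|$, and the wedge identity $\|\bigwedge_{j\in J}q_j\|=\vol_k(\E)^{\ell-1}\|\omega_I\|$ via $E^\top Q=W\,\ID$ --- are sound; they also reproduce the two numerical examples following the theorem.
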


Let us illustrate this result with the following example.

\begin{example}
  Let $$C = v_0 + \pos(\varepsilon_1 = e_1, \varepsilon_2 = e_2,\varepsilon_3 = e_3) $$ 
  be the cone introduced in part b) of Example \ref{example:cone-dual}. Choosing $I = \{1,2\}$ and $J = \{3\}$ in Theorem \ref{thm:main1} leads to the $0$-simplex
  $\Delta_J = \{ \tilde q_3 \}$ and $S_I$ is a $3$-dimensional pyramid with base $\conv(v_0,v_0+e_1,v_0+e_2)$ and apex $\0$, see Figure ..., and the volume ratio reads as
  $$ \frac{\vol_{0}(\Delta_J)}{\vol_{3}(S_I)} = \frac{3!}{0!} \cdot \frac{\vol_3(\E)^{0}}{|\langle v_0, q_3 \rangle|} = 3. $$
  
  Choosing $I = \{1\}$ and $J = \{2,3\}$ in Theorem \ref{thm:main1} leads to the $1$-simplex $\Delta_J = \conv(\tilde q_2,\tilde q_3)$ (a line segment) and $S_I$ is the $2$-simplex (a triangle) with base $\conv(v_0,v_0+e_1)$ and apex $\0$, see Figure ..., and the volume ratio reads as
  $$ \frac{\vol_{1}(\Delta_J)}{\vol_{2}(S_I)} = \frac{2!}{1!} \cdot \frac{\vol_3(\E)^{1}}{|\langle v_0, q_2 \rangle| \cdot |\langle v_0, q_3 \rangle|} = \frac{1}{2}. $$
  Let us briefly discuss the modification appearing if we rescale the generating vectors by $\lambda > 0$, that is $\varepsilon_j = \lambda e_j$. 
\end{example}

\begin{remark}
  Uniform scaling the generating vectors $\varepsilon_j$ by a common factor $\lambda > 0$ 
  yields rescaled vectors $q_j^\lambda = \lambda^{k-1} q_j$, $\tilde q_j^\lambda = \tilde q_j$ and volumes $\vol_{l-1}(\Delta_J^\lambda) = \vol_{l-1}(\Delta_j)$, $\vol_{m+1}(S_I^\lambda) = \lambda^m \vol_{m+1}(S_I)$ and $\vol_k(\E^\lambda) = \lambda^k \vol_k(\E)$. Thus both sides of~\eqref{eq:main-formula} rescale by the same factor $\lambda^{-m}$. 
\end{remark}  
  
\begin{remark} \label{rem:pol2con}
Theorem~\ref{thm:main1} can be applied to each vertex of a simple $k$-polytope $Q \subset \R^k$ containing the origin as follows. (Recall that a $k$-polytope $Q \subset \R^k$ is called \emph{simple} if each of each vertices is adjacent to exactly $k$-edges.) Choose a vertex $v_0 \in Q$ and consider its neighbours $v_1,\dots,v_k$ and its edge vectors $\varepsilon_j = v_j - v_0$. Then the $r$-faces of $C=v_0 + \pos(\varepsilon_1,\dots,\varepsilon_k)$ are in bijection to $r$-faces of $Q$ incident to $v_0$ and these faces of $Q$ are contained in the corresponding facets of $C$ for all $r\in[k]$. Now we choose $0 \le m < k$, $I \in {[k] \choose m}$ and $J = [k] \setminus I$. Using the notation in Theorem \ref{thm:main1}, $S_I$ can be viewed as a pyramid with apex $\0$ and base $F_I$ contained in the $m$-face of $Q$ incident to the vertices $v_0$ and $v_i$ with $i \in I$. The polar-dual $Q^*$ is simplicial and has a facet corresponding to $v_0 \in Q$. Then $\Delta_J$ is an $(|J|-1)$-face of $Q^*$ with vertices $\tilde q_j$, $j \in J$ and the volume ratio between $\Delta_J$ and $S_I$ is given in \eqref{eq:main-formula}.

\end{remark}

The proof of Theorem \ref{thm:main1} above is quite involved and is presented in Sections \ref{sec:proof_main_thm} and \ref{sec:three_fundamental_statements}. Readers who are mainly interested in a special Colin de Verdi{\'e}re Matrix application and spectral gap considerations can skip these two sections and turn directly to 
Section \ref{sec:vertextranspoly} on vertex transitive polytopes.

\subsection{An application to Izmestiev's Colin de Verdi{\`e}re Matrix}


Yves Colin de Verdi{\`e}re introduced a spectral graph invariant~$\mu$ which is the maximum corank of a family of so-called Colin de Verdi{\`e}re Matrices 
whose non-diagonal entries resemble the adjacency matrix in a weighed sense. This invariant has some remarkable properties: it is monotone under taking minors 
and its values characterize outer-planarity ($\mu \le 2$), planarity ($\mu \le 3$) and linkless embeddability ($\mu \le 4$) (see \cite{CdV-90,LS-98,vdHLS-99}). Generalizing earlier work by 
Lov{\'a}sz \cite{Lov-01} to higher dimensions $k > 3$, Izmestiev presented an explicit geometric construction of a Colin de Verdi{\`e}re Matrix of corank $k$ for 
the vertex-edge graph of any $k$-polytope in $\R^k$. We will see that, for simple polytopes, every non-zero off-diagonal entry of Izmestiev's matrix is a certain volume ratio which coincides with the volume ratio of Theorem~\ref{thm:main1} in the case $m=1$. We now define Colin de Verdi{\`e}re Matrices and state Izmestiev's result.

\begin{definition}[see {\cite{CdV-90,CdV-98,CdV-04}}]
  Let $G$ be a finite graph with vertex set $[n]$. A symmetric matrix $M = (M_{st}) \in \R^{n \times n}$ is called a \emph{Colin de Verdi{\`e}re Matrix} 
  for~$G$, if the following three properties are satisfied
  \begin{compactenum}[(M1)]
	\item 
		\[
		    M_{st} \begin{cases} 
						< 0 	& \text{if $st$ is an edge of $G$}, \\ 
						= 0 	& \text{if $st$ is not an edge of $G$ and $s \neq t$,}\\
						\in\R 	& \text{if $s=t$.}
					\end{cases}
		\]
	\item $M$ has exactly one negative eigenvalue and this eigenvalue is simple.
	\item If $X \in \R^{n\times n}$ is symmetric and $MX = \0$ and $X_{st}=0$ whenever $s=t$ or $st$ is an edge of $G$, then $X=\0$. 
  \end{compactenum}
  Let $\mathcal{M}_G$ be the set of all Colin de Verdi\`ere matrices of $G$. 
  The \emph{Colin de Verdi{\`e}re Number} $\mu(G)$ is defined as
  $$ \mu(G) = \max_{M \in \mathcal{M}_G} \corank(M). $$
  A Colin de Verdi{\`ere} Matrix of maximal corank is called \emph{optimal}.
\end{definition}

\begin{theorem}[see {\cite[Theorem 2.4]{Izm-10}} and its proof] \label{thm:ivan}
 	Let $Q \subset \R^k$ be a $k$-polytope with vertices $w_1,\dots,w_n$ that contains the origin $\0$ in its interior, $G$ be its vertex-edge graph 
	(or $1$-skeleton) and $P \subset \R^k$ be the polar-dual of $Q$. The vertex of $G$ corresponding to $w_s \in Q$ is denoted by $s \in [n]$, and the vertex set of $G$ is therefore $[n]$. For a given vertex $w_s$ of and a neighbour $w_t$ in $Q$, let $F_{st}$ be the ridge of $P$ obtained by the intersection of the facets of $P$ with outer 
	normals $w_s$ and $w_t$ and $\theta_{st}$ be the associated dihedral angle. 
  
 	 A Colin de Verdi{\`e}re Matrix $M \in \R^{n \times n}$ of corank $k$ for $G$ is then given by
	 \begin{equation} \label{eq:Mst}
     	M_{st} = 
			\begin{cases}
				- \frac{\vol_{k-2}(F_{st})} {\Vert w_s \Vert \cdot \Vert w_t \Vert \sin \theta_{st}} & \text{if $st$ is an edge of $G$}, \\ 
				0 																					 & \text{if $st$ is not an edge of $G$ and $s \neq t$,} 
			\end{cases}
	\end{equation}
	and
	\begin{equation} \label{eq:MIvan}
		M_{ss}w_s = - \sum_{t \neq s} M_{st} w_t.
	\end{equation}
    We denote this Colin de Verdi{\`e}re Matrix by $M(Q)$.
    In particular, we have for the Colin de Verdi{\`e}re Number
    $$ \mu(G) \ge k. $$
\end{theorem}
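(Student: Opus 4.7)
My plan is to recognise $M(Q)$ as the negative Hessian of a natural volume function on the polar dual $P$, and then to read each of the defining properties (M1)--(M3) from standard convex-geometric facts (Minkowski's theorem, Brunn--Minkowski, Alexandrov's uniqueness).

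\textbf{Step 1: Hessian identification and the sign pattern.} For parameters $t=(t_1,\dots,t_n)$ close to $\mathbf{1}=(1,\dots,1)$ introduce the deformed polar dual
\begin{equation*}
  P(t)=\{x\in\R^k : \langle x,w_s\rangle\le t_s \text{ for all } s\in[n]\},
\end{equation*}
so that $P(\mathbf{1})=P$, and let $V(t):=\vol_k(P(t))$, a positively $k$-homogeneous function smooth near $\mathbf{1}$. A pyramid decomposition of $P(t)$ with apex $\0$ gives $\partial V/\partial t_s=\vol_{k-1}(F_s(t))/\|w_s\|$. A planar calculation in the $2$-plane orthogonal to $F_{st}$ (where the two facets meet at a point at dihedral angle $\theta_{st}$) shows that an outward displacement of $F_t$ by $\delta$ shifts this meeting point along $F_s$ by $\delta/\sin\theta_{st}$; substituting $\delta=\delta t_t/\|w_t\|$ and writing $H:=\nabla^2 V(\mathbf{1})$, one obtains
\begin{equation*}
  H_{st}=\frac{\vol_{k-2}(F_{st})}{\|w_s\|\,\|w_t\|\sin\theta_{st}}\text{ for }s\sim t,\qquad H_{st}=0\text{ for }s\not\sim t,\ s\ne t.
\end{equation*}
Thus $H=-M$ off the diagonal, and (M1) is immediate from the explicit formula.

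\textbf{Step 2: Translation invariance and the signature (M2).} Translating the origin by $u\in\R^k$ shifts each $t_s$ by $\langle u,w_s\rangle$ and leaves $V$ unchanged; differentiating twice at $\mathbf{1}$ produces $\sum_t H_{st}w_t=0$ for every $s$, which is precisely \eqref{eq:MIvan} once one sets $M_{ss}:=-H_{ss}$, and places the $k$-dimensional translation subspace $T:=\{(\langle u,w_s\rangle)_s:u\in\R^k\}$ into $\ker H$. For the signature, Brunn--Minkowski makes $V^{1/k}$ concave; expanding $\nabla^2V^{1/k}\preceq 0$ delivers
\begin{equation*}
  H\preceq \frac{k-1}{k\,V(\mathbf{1})}\,(\nabla V(\mathbf{1}))(\nabla V(\mathbf{1}))^\top,
\end{equation*}
a rank-one positive semidefinite matrix, so $H$ has at most one positive eigenvalue, while Euler's identity $\mathbf{1}^\top H\mathbf{1}=k(k-1)V(\mathbf{1})>0$ forces exactly one. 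Hence $M=-H$ has a unique, simple negative eigenvalue, establishing (M2).

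\textbf{Step 3: Exact corank and Strong Arnold hypothesis (M3).} The main obstacle is to upgrade $\corank M\ge k$ to equality, i.e.\ $\ker H\subseteq T$. I would invoke the equality case of Brunn--Minkowski (equivalently, Minkowski's uniqueness theorem, or the Alexandrov--Fenchel equality case for convex polytopes): a nonzero $\xi\in\ker H$ would encode a first-order deformation of $P(\mathbf{1})$ along which $V^{1/k}$ stays affine, and such deformations are precisely the rigid translations, so $\xi\in T$. Once $\ker M=T$ is known, (M3) reduces to a combinatorial check: if $X$ is symmetric with $MX=0$ and $X_{st}=0$ whenever $s=t$ or $s\sim t$, every column of $X$ has the form $(\langle u^t,w_s\rangle)_s$ for some $u^t\in\R^k$, so the vanishing of $X$ on the diagonal and on edges says $u^t\perp w_s$ for $s=t$ and every neighbour $s$ of $t$ in $G$. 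Since the edge vectors at the vertex $w_t$ of the $k$-polytope $Q$ span $\R^k$, so does $\{w_t\}\cup\{w_s:s\sim t\}$, forcing $u^t=0$ and hence $X=0$. The bound $\mu(G)\ge k$ is then immediate from $\corank M=k$.
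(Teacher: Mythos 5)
Your argument -- realise $M$ as minus the Hessian of the volume of the deformed polar dual $P(t)$, read off the sign pattern (M1) from the explicit second partial, get \eqref{eq:MIvan} and $T\subseteq\ker H$ from translation invariance, obtain (M2) and $\ker H\subseteq T$ from Brunn--Minkowski concavity of $V^{1/k}$ and its equality case, and deduce (M3) from the fact that the edge vectors at a vertex of a $k$-polytope span $\R^k$ -- is precisely Izmestiev's original proof, which is what the paper cites for this theorem (Theorem~2.4 of \cite{Izm-10}); the paper does not reprove it but summarises the key identities \eqref{eq:Mstorig} and \eqref{eq:gaussdiv} in the following remark. So the proposal is correct and takes the same approach as the cited source; just note that the assertion that $V$ is smooth near $\mathbf{1}$ and the reduction of $\ker H\subseteq T$ to the equality case need the more careful treatment Izmestiev gives when $P$ is not simplicial.
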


\begin{remark}
Let us provide more details about Izmestiev's matrix $M$. Inspired by Lov{\'a}sz construction in the $3$-dimensional case \cite{Lov-01}, Izmestiev defined for all $s,t \in [n]$
\begin{equation} \label{eq:Mstorig} 
M_{st} = - \frac{\partial^2 \vol(P(x))}{\partial x_s \partial x_t}\vert_{x = (1,\dots,1)}, 
\end{equation}
where 
$$ P(x) = \set{p \in \R^k}{\langle p, w_s \rangle \le x_s \text{ for all $s \in [n]$}} $$
and $P(1,\dots,1) = Q^*$. Note that, in the case that $s \neq t$ and $w_s$ and $w_t$ are not neighbouring vertices in $Q$, the expression on the right hand side of \eqref{eq:Mstorig} vanishes. In the case that $st$ is an edge, he used the identity
$$ \frac{\partial^2 \vol(P(x))}{\partial x_s \partial x_t}\vert_{x = (1,\dots,1)} = \frac{\vol_{k-2}(F_{st})} {\Vert w_s \Vert \cdot \Vert w_t \Vert \sin \theta_{st}}. $$
The formula \eqref{eq:MIvan} for $M_{ss}$ is a consequence of
\begin{equation} \label{eq:gaussdiv} 
\sum_{t \in [n]} {\rm{vol}}(F_t(x)) \frac{w_t}{\Vert w_t \Vert} = 0 
\end{equation}
via differentiation with respect to $x_s$,
where $F_t(x)$ is the facet of $P(x)$ corresponding to the facet normal $w_t$. The resulting identity (see \cite[formula (5)]{Izm-10})
$$ \frac{\partial^2 {\rm{vol}}(P(x))}{\partial x_s^2} w_s + \sum_{t \neq s} \frac{\partial^2 {\rm{vol}}(P(x)}{\partial x_s \partial x_t} w_t = 0 $$
then implies the important fact that the right hand side of \eqref{eq:MIvan} is parallel to $w_s$ with scaling factor $M_{ss}$.\footnote{The scaling factor $M_{ss}$ can be zero, for example, in the case of the regular octahedron.} Note that
the identity \eqref{eq:gaussdiv} is a discrete version of the fact that the integral of the outer unit normal over a closed orientable hypersurface is zero, which is a direct consequence of Gauss Divergence Theorem. 
\end{remark}

A straightforward but useful consequence of Theorem \ref{thm:ivan} is Corollary \ref{cor:specrep} below, which is related to the inverse construction of polytopes from Colin de Verdi{\`e}re matrices of general graphs discussed in \cite[Subsection 3.1]{Izm-10}.  While this inverse construction is not always possible for general graphs, Corollary \ref{cor:specrep} is a positive answer to the question whether a given $k$-polytope $Q$ can be spectrally reconstructed -- at least combinatorially -- from the Colin de Verdi{\`e}re matrix $M$ given in Theorem \ref{thm:ivan}. In the case of the adjacency matrix, the spectral construction of polytopes from eigenspaces lead to the notion of \emph{eigenpolytopes} by Godsil (see \cite{G-78} and Winter \cite{W-20} for a survey and results in the special case of edge transitive polytopes).

\begin{corollary} \label{cor:specrep}
    Let $Q \subset \mathbb{R}^k$ be a $k$-polytope with $n$ vertices $w_1,\dots,w_n$ and
    \begin{equation} \label{eq:wphi} 
    \begin{psmallmatrix} \zsvek{w_1} \\[-1mm] \vdots \\ \zsvek{w_n} \end{psmallmatrix} = \begin{psmallmatrix} \ssvek{\phi_1} & \cdots & \ssvek{\phi_k} \end{psmallmatrix}. \end{equation}
    Then the vectors $\phi_j \in \mathbb{R}^n$ are a basis of the kernel of the Colin de Verdi{\`e}re Matrix $M$ associated to $Q$ in Theorem \ref{thm:ivan}.
\end{corollary}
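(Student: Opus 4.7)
The plan is to verify the two ingredients needed for a basis statement: (i) each $\phi_j$ lies in $\ker M$, and (ii) the $\phi_j$ are linearly independent and there are exactly $\corank(M)=k$ of them.

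For (i), I would read off the entries of $M\phi_j$ directly from \eqref{eq:wphi}: by definition, $(\phi_j)_t$ is the $j$-th coordinate of $w_t$, so
\[
(M\phi_j)_s \;=\; \sum_{t\in[n]} M_{st}\,(w_t)_j \;=\; \Bigl(\sum_{t\in[n]} M_{st}\, w_t\Bigr)_{\!j}.
\]
But \eqref{eq:MIvan}, rewritten as $\sum_{t\in[n]} M_{st}\,w_t = 0$ for every $s$, shows that the vector in parentheses vanishes. Hence every coordinate of $M\phi_j$ is zero, and $\phi_j\in\ker M$ for $j=1,\dots,k$.

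For (ii), the point is that the $n\times k$ matrix $W$ whose rows are $w_1,\dots,w_n$ has rank exactly $k$: since $Q$ is a $k$-polytope in $\R^k$ containing $\0$ in its interior, the vertices $w_1,\dots,w_n$ span $\R^k$ as a vector space (otherwise $Q$ would be contained in a proper linear subspace and could not have $\0$ in its interior). The columns of $W$ are precisely $\phi_1,\dots,\phi_k$, so these $k$ vectors are linearly independent. Combined with Theorem~\ref{thm:ivan}, which guarantees $\dim\ker M = \corank(M) = k$, we conclude that $\phi_1,\dots,\phi_k$ form a basis of $\ker M$.

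Neither step involves a real obstacle; the argument is essentially a repackaging of \eqref{eq:MIvan} together with the full-dimensionality of $Q$. The only point that deserves a brief justification is the rank statement for $W$, and this is the one place where the hypothesis $\0\in\interior(Q)$ is used, so I would state it explicitly rather than let it pass as folklore.
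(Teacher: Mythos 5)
Your proof is correct and follows essentially the same path as the paper's: use \eqref{eq:MIvan} in the form $\sum_t M_{st}w_t = 0$ to see the columns $\phi_j$ lie in $\ker M$, then invoke full-dimensionality of $Q$ for linear independence and $\corank(M)=k$ for the count. One small inaccuracy in your closing remark: the rank of $W$ being $k$ does not actually require $\0\in\interior(Q)$; being a $k$-polytope already forces the affine hull of the vertices to be all of $\R^k$, and the affine hull is always contained in the linear span, so the vertices linearly span $\R^k$ automatically. The paper invokes precisely this weaker hypothesis (``since $Q$ is a $k$-polytope''), so the attribution to $\0\in\interior(Q)$ should be dropped.
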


\begin{proof}
    Recall from \eqref{eq:MIvan} that we have for all fixed $s \in [n]$,
    $$ \sum_{t} M_{st} w_t = 0, $$
    which gives
    $$ M \begin{psmallmatrix} \ssvek{\phi_1} & \cdots & \ssvek{\phi_k} \end{psmallmatrix} = M \begin{psmallmatrix} \zsvek{w_1} \\[-1mm] \vdots \\ \zsvek{w_n} \end{psmallmatrix} = \begin{psmallmatrix} \zsvek{\mathbb 0_k^\top} \\[-1mm] \vdots \\ \zsvek{\mathbb 0_k^\top}  \end{psmallmatrix} = \begin{psmallmatrix} \ssvek{\mathbb 0_n} & \cdots & \ssvek{\mathbb 0_n} \end{psmallmatrix}. $$ 
    Therefore, $\phi_1,\dots,\phi_k$ lie in the kernel of $M$. Since
    $Q$ is a $k$-polytope, its vertices $w_1,\dots,w_n$ span $\mathbb{R}^k$, and since $M$ has of corank $k$, the vectors $\phi_1,\dots,\phi_k$ form as basis of this kernel.
\end{proof}    


In the case of a \emph{simple} $k$-polytope $Q \subset \mathbb{R}^k$, we can use Theorem \ref{thm:main1} to obtain an alternative description of the entries of the Colin de Verdi{\`e}re Matrix $M$. Assume that $Q$ is simple and that there is an edge between its vertices $w_s$ and $w_t$. To connect 
to Remark~\ref{rem:pol2con}, recall that $v_0$ is the apex of the cone $C$ and the neighbors of $v_0$ are denoted by $v_1,\dots,v_k$. Now we identify $w_s$ 
with $v_0$ and $w_t$ is $v_r$ for some $r\in[k]$. We also have $m=1$, $I=\{r\}$ and $J=[k]\setminus I$. The denominator 
$\Vert w_s \Vert \cdot \Vert w_t \Vert \sin \theta_{st}$ 
of $M_{st}$ is the volume of the $2$-dimensional parallel-epiped spanned by the vectors $v_0,v_r \in \R^k$. Thus
\[
	 \Vert w_s \Vert \cdot \Vert w_t \Vert \sin \theta_{st} = 2 \vol_2(S_I). 
\]
The numerator of $M_{st}$ is the volume of the ridge $F_{st}$ which is the intersection of the facets polar-dual to the vertices $v_0$ and $v_r$ and 
therefore the convex hull of $\tilde q_j$, where $j\in J$. 
Thus $F_{st}=\conv\set{\tilde q_j}{j\in J}=\Delta_J$ and
\[
	\vol_{k-2}(F_{st}) = \vol_{k-2}(\Delta_J).
\]

With Theorem \ref{thm:main1} we obtain therefore the following new interpretation of the non-zero and off-diagonal entries of M for a simple polytope $Q$.

\begin{corollary} \label{cor:altIavnCvD}
	Let $Q \subset \R^k$ be a $k$-polytope with vertices $w_1,\dots,w_n$ that contains the origin in its interior and $M$ the Colin de Verdi{\`e}re Matrix of 
	Theorem~\ref{thm:ivan}. If $Q$ is \emph{simple} and $w_s, w_t$ is a pair of vertices that form an edge of $Q$ then 
	\begin{equation} \label{eq:Mstalternative}
     	M_{st} = 
				- \frac{1}{(k-2)!} 
					\cdot 
					\frac{\vol_k(\E)^{k-2}}{\prod_{j \in [k]\setminus \{r\}} |\langle w_s, q_j \rangle|},
	\end{equation}
	where $q_{j} = (-1)^{j-1} \varepsilon_1 \times \cdots \times \widehat{\varepsilon}_j \times \cdots \times \varepsilon_k$ with $v_1, \dots, v_k$ be the neighbors of $v_0:=w_s$ 
	in $Q$ with $v_r=w_t$ and $\varepsilon_j = v_j - v_0$ for $j \in [k]$ and $\E$ is the parallel-epiped spanned by $\varepsilon_1,\dots,\varepsilon_k \in \R^k$.
\end{corollary}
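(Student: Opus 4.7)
The plan is to carry out exactly the substitution already set up in the paragraph preceding the statement: we use Remark~\ref{rem:pol2con} to reinterpret the vertex figure at $w_s$ as a simplicial cone and then invoke Theorem~\ref{thm:main1} with parameters $m=1$, $I=\{r\}$, $J=[k]\setminus\{r\}$, and $\ell-1 = k-2$. The simplicity of $Q$ is what makes this possible, since it guarantees that $w_s$ has precisely $k$ neighbors, so the cone $C = v_0 + \pos(\varepsilon_1,\dots,\varepsilon_k)$ with $v_0 := w_s$ and $\varepsilon_j = v_j - v_0$ is indeed $k$-dimensional and simplicial, matching the hypothesis of Theorem~\ref{thm:main1}.

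The first step is to match the two pieces of the defining formula \eqref{eq:Mst} for $M_{st}$ with the two volumes in \eqref{eq:main-formula}. For the denominator, note that $v_0 + \varepsilon_r = w_t$, hence $S_I = \conv(\0, w_s, w_t)$ is the triangle whose area equals $\tfrac12 \|w_s\|\|w_t\|\sin\theta_{st}$; this gives $\|w_s\|\|w_t\|\sin\theta_{st} = 2\vol_2(S_I)$. For the numerator, the ridge $F_{st}$ of $Q^*$ is the intersection of the facets of $Q^*$ dual to $w_s$ and $w_t$, and its vertices are exactly the rescaled facet normals $\tilde q_j$ with $j \in [k]\setminus\{r\}$, so that $F_{st} = \Delta_J$ and $\vol_{k-2}(F_{st}) = \vol_{k-2}(\Delta_J)$.

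The second step is purely an algebraic substitution. Plugging the above identifications into \eqref{eq:Mst} gives $M_{st} = -\vol_{k-2}(\Delta_J)/(2\vol_2(S_I))$, and applying Theorem~\ref{thm:main1} with $m=1$ and $\ell = k-1$ yields
\[
M_{st} = -\frac{1}{2}\cdot\frac{2!}{(k-2)!}\cdot\frac{\vol_k(\E)^{k-2}}{\prod_{j\in J}|\langle v_0, q_j\rangle|},
\]
which simplifies, after cancelling the factor of $2$ and substituting $v_0 = w_s$ and $J = [k]\setminus\{r\}$, to the claimed expression \eqref{eq:Mstalternative}.

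I do not expect a real obstacle in this proof; it is essentially bookkeeping. The one place where care is required is the index matching: one must confirm that the index $r$ excluded from the product in \eqref{eq:Mstalternative} is precisely the index corresponding to the edge $w_s w_t$ (equivalently, $v_r = w_t$), so that $J = [k]\setminus\{r\}$ agrees with the index set appearing in Theorem~\ref{thm:main1}. Once this identification is fixed, everything else follows from a direct citation of Theorem~\ref{thm:main1}.
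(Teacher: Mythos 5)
Your proof is correct and follows essentially the same route as the paper: the paper's proof is exactly the discussion preceding the corollary, which also identifies $\|w_s\|\|w_t\|\sin\theta_{st} = 2\vol_2(S_I)$ and $F_{st} = \Delta_J$ and then substitutes into Theorem~\ref{thm:main1} with $m=1$, $I=\{r\}$, $\ell = k-1$. The arithmetic (cancelling the $2$ against $(m+1)! = 2!$) is carried out correctly.
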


We would like to note that the right hand side of \eqref{eq:Mstalternative} may at times be easier to compute than the original definition \eqref{eq:Mst} of $M_{st}$ since it only involves the vertices of the polytope $Q$ and vector products of certain of its differences and not geometric quantities involving both the polytope $Q$ and its polar dual $P$. We will see this in Subsection \ref{subsec:coxperm} when we study permutahedra of Coxeter groups (Corollary \ref{cor:CoxMdescr}). 

\subsection{Spectral graps of vertex transitive polytopes}

Permutahedra of Coxeter groups are special examples of vertex transitive polytopes, which are the main topic of Section \ref{sec:vertextranspoly}. 
Instead of just considering a single vertex transitive polytope in $\R^k$ with respect to finite subgroup $\Gamma \subset O(k)$, we consider a \emph{variation} of this polytope, where each polytope in the variation is obtained as the convex hull of the orbit $\Gamma w$ of a point $w$ in the sphere $S^{k-1}(r)$ of radius $r>0$ centered at the origin of $\R^k$. We denote this polytope by $Q_w$. The points $w \in S^{k-1}(r)$, for which there is a \emph{simply transitive} $\Gamma$-action on the vertices of $Q_w$, form a subset of $S^{k-1}(r)$ of full measure, and we refer to this set as the \emph{regular subset} $S^{k-1}_{\rm{reg}}(r)$. In fact, the sphere $S^{k-1}(r)$ and its regular subset $S^{k-1}_{\rm{reg}}(r)$ can be partitioned into subsets in which the associated polytopes have specific combinatorial types. Next we consider the Colin de Verdi\`ere Matrices introduced in Theorem \ref{sec:vertextranspoly} for the polytopes in this variation. These Colin de Verdi{\`e}re Matrices $M(Q_w)$ allow a natural transformation into doubly stochastic matrices, which can be viewed as random walk matrices on the vertex-edge graphs of the polytopes $Q_w$. We denote this random walk matrices by $T(Q_w)$ (see \eqref{eq:TtauM} with the choice $\tau=\gamma$). In Section \ref{sec:vertextranspoly}, we study the second largest    
eigenvalue $\lambda_1$ of these random walk matrices under the variation by way of specific examples: we discuss permutahedra of Coxeter groups in Subsection \ref{subsec:coxperm} and we investigate polytopes associated to the pure rotational tetrahedral 
group in Subsection \ref{subsec:rottet}. In these examples, we find a close relationship between equilaterality of the polytopes $Q_w$ in the variation and minimality properties of $\lambda_1$. Since the largest eigenvalue $\lambda_0$ of a stochastic matrices is always equal to $1$, minimality of $\lambda_1$ corresponds to maximality of the spectral gap $\lambda_0-\lambda_1$, which measures the convergence rate of the random walk, described by $T(Q_w)$, on the vertex-edge graph of $Q_w$ to the equidistribution. These observations lead to a discussion and questions for further research in the final Subsection \ref{subsec:discusquestions}.


\section{Proof of our main theorem}
\label{sec:proof_main_thm}

This section is dedicated to the proof of our main result, 
the volume formula in Theorem~\ref{thm:main1}. Let us first outline the strategy of proof. In Lemma~\ref{lem:volume_S_I}, a formula for the volume of 
the $(|I|+1)$-simplex $S_I=\mathsf{conf}\{\0, v_0\}\cup\set{v_0+\varepsilon_i}{i\in I}$ is derived, while a formula for the volume 
of the $(|J|-1)$-dimensional simplex $\Delta_J := \mathsf{conv} \set{ \widetilde q_j }{j\in J}\subseteq \Delta_{[k]}$ is obtained 
in Lemma~\ref{lem:volume_Delta_J}. Finally, the statement in Theorem~\ref{thm:main1} about the volume ratio $\frac{\mathsf{vol}_{|J|-1}(\Delta_J)}{\mathsf{vol}_{|I|+1}(S_I)}$ follows by combining these two results. In the proofs of this section, we use some fundamental lemmas of own interest, which are presented in the following Section \ref{sec:three_fundamental_statements}.

We start with some notation that we use throughout the paper. Recall that the adjunct matrix~$\mathsf{adj}\, A$ of 
$A\in \R^{r\times r}$ is the transpose of its cofactor matrix, that is,
\[
	\big( \mathsf{adj}\, A \big)_{s,t}
		=
		(-1)^{s+t}\det(A^{t,s})
	\qquad\text{for all $s,t\in[r]$.}
\]
where $A^{t,s}$ is obtained from $A$ by deletion of row $t$ and column $s$. A standard result of linear algebra is 
\[
	A\cdot \mathsf{adj}\, A = \mathsf{adj}\, A\cdot A = (\det A)\ID_r.
\]

The restriction of a matrix is obtained by deleting certain rows and columns simultaneously and the extension is obtained by inserting certain rows and columns with all entries equal to zero. Both processes are described in the following definition.

\begin{definition} $ $\\
	Let $r,s\in\N$ and $I=\{\ell_1<\ell_2<\ldots<\ell_s\}\in\binom{[r]}{s}$.
	\begin{compactenum}[a)]
		\item The restriction $[A]_{I}=(\widetilde a_{ij})_{i,j\in[s]}$ of $A=(a_{ij})_{i,j\in[r]}$ is defined by 
			\[
				\widetilde a_{ij} := a_{\ell_i\ell_j}\qquad \text{for all $i,j\in[s]$.}
			\]
		\item The extension $\ext{[r]}{I} (B)=(\widetilde b_{ij})_{i,j\in[r]}$ of $B=(b_{ij})_{i,j\in[s]}$ is defined by 
			\[	
				\widetilde b_{ij}
					:= \begin{cases} 
						b_{\varphi(i)\varphi(j)} 	& i,j\in I\\ 
						0 							& i\in [r]\setminus I\text{ or }j\in[r]\setminus I,
					  \end{cases}
			\]
			where $\varphi:I\rightarrow[s]$ with $\varphi(\ell_k):=k$.
	\end{compactenum} 
\end{definition}

Now we can start with our proof of Theorem \ref{thm:main1}. Our first lemma reads as follows:

\begin{lemma}$ $\\ \label{lem:volume_S_I}\noindent
	Let $C_{v_0}(\varepsilon_1,\ldots,\varepsilon_k) = v_0 + \pos(\varepsilon_1,\dots,\varepsilon_k) \subset \R^k$ be a $k$-dimensional simplicial cone which contains~$\mathbb 0$ in its interior and $S_I:=\conv(\{\mathbb 0, v_0\}\cup\set{v_0+\varepsilon_i}{i\in I})$ for $I\in\binom{[k]}{m}$ and $m\in [k]$ with $m < k$.
	Then
	\begin{align*}
		\big(\mathsf{vol}_{m+1}(S_I)\big)^2 
			&= \left(\frac{1}{(m+1)!}\right)^2
				\beta^\top
				\cdot
				\Big(
					\big(\det [E]_{I}\big) E^{-1}
					-  \mathsf{Ext}^{[k]}_I\big(\mathsf{adj}\,[E]_I\big)
				\Big)
				\cdot 
				\beta	
	\end{align*}
	where
	\[
		E		:= \Big( 
						\langle \varepsilon_{r},\varepsilon_{s} \rangle 
			   	   \Big) 
				   \in \R^{k\times k}
		\qquad\text{and}\qquad
		\beta:=\begin{psmallmatrix} \langle v_0,\varepsilon_1 \rangle\\[-1mm] \vdots \\ \langle v_0,\varepsilon_k \rangle \end{psmallmatrix}.
	\]
\end{lemma}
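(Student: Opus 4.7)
The plan is to apply the standard Gram-matrix formula for the squared volume of an $(m+1)$-simplex in $\R^k$: if $p_0,p_1,\dots,p_{m+1}$ are its vertices and $G\in\R^{(m+1)\times(m+1)}$ has entries $G_{ij}=\langle p_i-p_0,\,p_j-p_0\rangle$, then $\vol_{m+1}(S)^2=\frac{1}{((m+1)!)^2}\det G$. I will apply this with base vertex $p_0=\0$, so that the edge vectors from $\0$ are $v_0$ together with $v_0+\varepsilon_i$ for $i\in I$.

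First I would write the resulting $(m+1)\times(m+1)$ Gram matrix down explicitly. Subtracting the row corresponding to the vertex $v_0$ from each subsequent row and then doing the same on columns (neither step changes $\det G$), every cross term $\langle v_0,\varepsilon_i\rangle$ and every copy of $\|v_0\|^2$ cancels off the corner entry, and the matrix turns into the bordered block
\[
    \widetilde G \;=\; \begin{pmatrix} \|v_0\|^2 & \beta_I^\top \\ \beta_I & [E]_I \end{pmatrix},
\]
where $\beta_I\in\R^m$ is the restriction of $\beta$ to the indices in $I$. Next, cofactor expansion along the first row combined with the definition of the adjugate yields the bordered-matrix identity
\[
    \det\begin{pmatrix} a & b^\top \\ b & D \end{pmatrix} \;=\; a\det D - b^\top(\mathsf{adj}\,D)\,b,
\]
which applied to $\widetilde G$ gives $\det G = \|v_0\|^2 \det[E]_I - \beta_I^\top(\mathsf{adj}\,[E]_I)\,\beta_I$.

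It remains to re-express this in terms of the full vector $\beta\in\R^k$ and full matrix $E$. The second summand equals $\beta^\top\,\ext{[k]}{I}(\mathsf{adj}\,[E]_I)\,\beta$ directly from the definition of $\mathsf{Ext}$, since the extension pads by zero rows and columns outside $I$ and hence automatically selects the $I$-entries of $\beta$. For the first summand the key algebraic identity is
\[
    \|v_0\|^2 \;=\; \beta^\top E^{-1}\beta.
\]
This I would establish by letting $V\in\R^{k\times k}$ be the matrix with columns $\varepsilon_1,\dots,\varepsilon_k$: the hypothesis $\0\in\interior(C)$ forces $\varepsilon_1,\dots,\varepsilon_k$ to be linearly independent, so $E=V^\top V$ is invertible; then $\beta=V^\top v_0$ and therefore $\beta^\top E^{-1}\beta = v_0^\top V(V^\top V)^{-1}V^\top v_0 = \|v_0\|^2$. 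Substituting and dividing by $((m+1)!)^2$ produces the asserted formula.

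The main obstacle I anticipate is the index bookkeeping between the $m$-dimensional objects $[E]_I,\beta_I$ and their ambient $k$-dimensional counterparts $E,\beta$, together with a careful invocation of the bordered-matrix determinant identity; both this identity and the relation $\|v_0\|^2=\beta^\top E^{-1}\beta$ are natural candidates for the ``fundamental lemmas'' postponed to Section~\ref{sec:three_fundamental_statements}.
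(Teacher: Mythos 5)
Your proposal is correct and follows essentially the same route as the paper's proof: Gram determinant for the simplex with base vertex $\0$, row/column reduction to a bordered matrix, the bordered-determinant identity $\det\begin{psmallmatrix} a & b^\top \\ b & D\end{psmallmatrix} = a\det D - b^\top(\mathsf{adj}\,D)\,b$ (which the paper obtains by expanding along the first row and column), and the identity $\|v_0\|^2=\beta^\top E^{-1}\beta$ (which the paper derives via the coefficients of $v_0$ in the basis $\varepsilon_1,\dots,\varepsilon_k$ rather than your projection-matrix argument, though the two are equivalent). The one minor inaccuracy is your closing speculation: neither the bordered-determinant identity nor $\|v_0\|^2=\beta^\top E^{-1}\beta$ is among the three fundamental lemmas of Section~\ref{sec:three_fundamental_statements} (those are the Scaling, Determinant, and Extension Lemmas, deployed in Lemma~\ref{lem:volume_Delta_J} and in the proof of Theorem~\ref{thm:main1}); both facts you flagged are handled inline in the paper's proof, just as you handle them.
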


\begin{proof}
	Without loss of generality, we assume that $I=[m]$ and write $[m]_0$ for $\{0\}\cup[m]$. Moreover, let
	$v_i=v_0 + \varepsilon_i$ for all $i\in I$. Setting $\varepsilon_0=\mathbb 0$ we obtain, by using Gram's determinant formula for the volume of a parallel-epiped and then subtracting the first column from the later columns and then subtracting the first row from the later rows,
	\begin{align*}
		\Big((m+1)!\ \mathsf{vol}_{m+1}(S_I)\Big)^2 
			&= \det \Big(\langle v_i-\mathbb 0,v_j-\mathbb 0 \rangle \Big)_{i,j\in [m]_0}\\
			&= \det \Big(\langle v_0 + \varepsilon_i,v_0 + \varepsilon_j \rangle \Big)_{i,j\in [m]_0}\\
			&= \det\left(
			        \begin{pmatrix}  
							\zvek{\quad\, v_0^\top\quad\, } \\ 
							\zvek{v_0^\top + \varepsilon_1^\top} \\ 
							\vdots \\ 
							\zvek{v_0^\top + \varepsilon_m^\top} 
						\end{pmatrix}
						\cdot
						\begin{pmatrix}  
							\svek{v_0} & \svek{v_0+\varepsilon_1} & \cdots & \svek{v_0+\varepsilon_m} 
						\end{pmatrix}
					\right)\\
			&= \det
					\left(
                    \begin{pmatrix} 
						\zvek{ v_0^\top } \\ 
						\zvek{\varepsilon_1^\top} \\ 
						\vdots \\ 
						\zvek{\varepsilon_m^\top} 
					\end{pmatrix}
				\cdot
					\begin{pmatrix}  
						\svek{v_0} & \svek{\varepsilon_1} & \cdots & \svek{\varepsilon_m} 
					\end{pmatrix}
                    \right) \\
			 &= \det 
			    \left(\,\begin{matrix}[c|ccc] 
			   			\langle v_0,v_0 \rangle				& \langle v_0,\varepsilon_1 \rangle				& \cdots	& \langle v_0,\varepsilon_m \rangle				  \\ \hline 
			   			\langle \varepsilon_1,v_0 \rangle	& \langle \varepsilon_1,\varepsilon_1 \rangle 	& \cdots	& \langle \varepsilon_1,\varepsilon_m \rangle		\\
			   			\vdots								& \vdots										&			& \vdots										  \\
			   			\langle \varepsilon_m,v_0 \rangle	& \langle \varepsilon_m,\varepsilon_1 \rangle	& \cdots	& \langle \varepsilon_m,\varepsilon_m \rangle
			    \end{matrix}\, \right).
	\end{align*}
	Observe that $[\beta]_{[m]}=\begin{psmallmatrix} \langle v_0,\varepsilon_i \rangle\\ \vdots \\ \langle v_0,\varepsilon_m \rangle \end{psmallmatrix}$ and
	apply Laplace's Theorem to the first column and to the first row: 
	\[
		   \det 
		   \left(\,\begin{matrix}[c|ccc] 
					\langle v_0,v_0 \rangle		& ([\beta]_{[m]})^\top	\\ \hline 
					[\beta]_{[m]}				& [E]_{[m]}		\\				
		   \end{matrix}\, \right)
			 = \langle v_0,v_0 \rangle \det [E]_{[m]}
					+
					\sum_{i\in [m]}
						(-1)^i
						\sum_{j\in [m]}
						(-1)^{j-1}
						\beta_i \beta_j 
						\det (([E]_{[m]})^{j,i}).
	\]
	If we assume $v_0=\sum_{i\in[k]}c_i\varepsilon_i$ 
	then $E^{-1}\beta=\begin{psmallmatrix} c_1 \\ \vdots \\ c_k \end{psmallmatrix}$. 
	This implies $\langle v_0,v_0 \rangle = \beta^\top E^{-1} \beta$. Therefore
	\[
	   \det 
	   \left(\,\begin{matrix}[c|ccc] 
				\langle v_0,v_0 \rangle		& ([\beta]_{[m]})^\top	\\ \hline 
				[\beta]_{[m]}				& [E]_{[m]}		\\				
	   \end{matrix}\, \right)
		 = \det [E]_{[m]}\big(\beta^\top E^{-1} \beta\big)
					-
					[\beta]_{[m]}^\top
					\cdot
					\mathsf{adj} ([E]_{[m]})
					\cdot
					[\beta]_{[m]}
	\]
	and the claim follows.
\end{proof}

Our second lemma is concerned with the volume of the simplex $\Delta_J$.

\begin{lemma}$ $\\ \label{lem:volume_Delta_J}\noindent
    Let $C_{v_0}(\varepsilon_1,\ldots,\varepsilon_k) = v_0 + \pos(\varepsilon_1,\dots,\varepsilon_k) \subset \R^k$ be a $k$-dimensional simplicial cone which contains~$\mathbb 0$ in its interior with facet normals $q_j := (-1)^{j-1} \varepsilon_1 \times \ldots \times \widehat{\varepsilon_j} \times \ldots \times \varepsilon_k$ for all $j\in [k]$. Let $\Delta_J=\conv\set{\widetilde q_j=\frac{1}{\langle v_0,q_j\rangle}q_j}{j\in J}$ for $J\in\binom{[k]}{\ell}$ and $\ell\in[k]$. 
	Then 
	\[
		\big(\mathsf{vol}_{\ell-1}(\Delta_J)\big)^2 
			= \left(\frac{1}{(\ell-1)!\prod_{j\in J}\langle v_0,q_j\rangle}\right)^2
				v_q^\top \cdot \mathsf{Ext}^{[k]}_J\big( {\mathsf{adj}}\, [Q]_J\big)  \cdot v_q
	\]
	where
	\[
		Q		:= \Big( 
						\langle q_r,q_s \rangle 
			   	   \Big) 
				   \in \R^{k\times k}
		\qquad\text{and}\qquad
		v_q:=\begin{psmallmatrix} \langle v_0,q_1 \rangle\\[-1mm] \vdots \\ \langle v_0,q_k \rangle \end{psmallmatrix}.
	\]
\end{lemma}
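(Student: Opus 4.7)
The plan is to realize $P_J := \conv\big(\{\0\}\cup\set{\tilde q_j}{j\in J}\big)$ as an $\ell$-dimensional pyramid with apex at the origin and base $\Delta_J$, and to compute $\vol_\ell(P_J)$ in two different ways. Denote by $\tilde T_J := \big(\langle \tilde q_{j_a},\tilde q_{j_b}\rangle\big)_{a,b\in[\ell]}$ the Gram matrix of the scaled facet normals indexed by $J=\{j_1<\cdots<j_\ell\}$, and note that $\tilde T_J = D_J\,[Q]_J\, D_J$, where $D_J$ is the diagonal matrix with entries $1/\langle v_0,q_j\rangle$ for $j\in J$. The identity $\langle \varepsilon_i,q_j\rangle = \delta_{ij}\det(\varepsilon_1,\ldots,\varepsilon_k)$ follows directly from the definition of $q_j$, so the family $\{q_j\}_{j\in J}$ is linearly independent and $\tilde T_J$ is positive definite.

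First, Gram's formula applied to the edge vectors $\tilde q_{j_a}-\0$ of $P_J$ gives $\vol_\ell(P_J)^2 = \tfrac{1}{(\ell!)^2}\det\tilde T_J$. Second, the elementary pyramid formula yields $\vol_\ell(P_J) = \tfrac{1}{\ell}\,h_J\,\vol_{\ell-1}(\Delta_J)$, where $h_J$ denotes the distance from the origin to the affine hull of $\Delta_J$. I would compute $h_J$ by minimizing $\|\sum_{i}\lambda_i\tilde q_{j_i}\|^2$ subject to the affine constraint $\sum_i\lambda_i=1$: a Lagrange-multiplier argument produces $\tilde T_J\,\lambda = \tfrac{\mu}{2}\mathbf 1$, with $\mathbf 1 = (1,\ldots,1)^\top\in\R^\ell$, and substituting the constraint back yields $h_J^2 = (\mathbf 1^\top\tilde T_J^{-1}\mathbf 1)^{-1}$. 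Equating the two expressions for $\vol_\ell(P_J)^2$ produces the key identity
\[
  \big((\ell-1)!\,\vol_{\ell-1}(\Delta_J)\big)^2 = \det(\tilde T_J)\cdot \mathbf 1^\top \tilde T_J^{-1}\mathbf 1 = \mathbf 1^\top \mathsf{adj}(\tilde T_J)\,\mathbf 1.
\]

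To finish, I substitute $\tilde T_J = D_J\,[Q]_J\,D_J$ into the right-hand side and apply the multiplicativity relation $\mathsf{adj}(D_J\,[Q]_J\,D_J) = \det(D_J)^2\, D_J^{-1}\,\mathsf{adj}([Q]_J)\,D_J^{-1}$ together with the observation $D_J^{-1}\mathbf 1 = [v_q]_J$. This collapses the expression to $\det(D_J)^2\,[v_q]_J^\top\,\mathsf{adj}([Q]_J)\,[v_q]_J$; since the extension operator $\ext{[k]}{J}$ pads the $J\times J$ block with zeros, the outer $v_q$'s effectively restrict to $[v_q]_J$ and this equals $\tfrac{1}{\prod_{j\in J}\langle v_0,q_j\rangle^2}\,v_q^\top\,\ext{[k]}{J}(\mathsf{adj}\,[Q]_J)\,v_q$. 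Dividing both sides by $((\ell-1)!)^2$ gives exactly the claimed formula.

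The step I expect to be the main obstacle is producing the height identity $h_J^2 = (\mathbf 1^\top\tilde T_J^{-1}\mathbf 1)^{-1}$ cleanly. An attractive algebraic alternative that parallels the Laplace-expansion strategy of Lemma~\ref{lem:volume_S_I} and avoids Lagrange multipliers altogether is to use the Cayley--Menger-type representation
$\big((\ell-1)!\,\vol_{\ell-1}(\Delta_J)\big)^2 = -\det\begin{psmallmatrix}\tilde T_J & \mathbf 1\\ \mathbf 1^\top & 0\end{psmallmatrix}$
and then apply a Schur-complement expansion to the augmented Gram matrix on the right; this recovers the same key identity by pure determinant manipulation and is likely closer in spirit to whichever of the fundamental statements of Section~\ref{sec:three_fundamental_statements} the authors invoke.
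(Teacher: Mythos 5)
Your proof is correct, and it reaches the paper's decisive intermediate identity $\bigl((\ell-1)!\,\vol_{\ell-1}(\Delta_J)\bigr)^2 = \mathbb 1_\ell^\top\cdot \mathsf{adj}\,\tilde T_J\cdot\mathbb 1_\ell$ by a genuinely different route. The paper applies Gram's formula to the difference vectors $\tilde q_1-\tilde q_i$, writes the resulting Gram matrix as $T\cdot\tilde T_J\cdot T^\top$ with $T=\begin{pmatrix}[c|c]\ssvek{\mathbb 1}&-\ID_{\ell-1}\end{pmatrix}$, and then invokes the Determinant Lemma (Lemma~\ref{lem:determinant_lemma}) -- a purely algebraic statement about $\det(TAT^\top)$ that holds for any square $A$, invertible or not. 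You instead erect the $\ell$-pyramid $P_J=\conv(\{\0\}\cup\{\tilde q_j\}_J)$, compute its volume once via Gram's formula on the $\tilde q_j$ themselves and once via base-times-height, and identify the height $h_J$ through a Lagrange-multiplier minimization. This substitutes a geometric argument that needs $\tilde T_J$ to be positive definite (which you correctly justify from $\langle\varepsilon_i,q_j\rangle=\delta_{ij}\det(\varepsilon_1,\ldots,\varepsilon_k)$, making the $q_j$, hence the $\tilde q_j$, linearly independent) for the paper's unconditional algebraic lemma. In the geometric setting of the paper this costs nothing, and your derivation of $h_J^2=(\mathbb 1^\top\tilde T_J^{-1}\mathbb 1)^{-1}$ is clean; it also has the agreeable side effect that the intermediate identity itself becomes a transparent statement about a pyramid. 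The final rescaling step -- $\tilde T_J=D_J[Q]_JD_J$, $\mathsf{adj}(D_J[Q]_JD_J)=(\det D_J)^2 D_J^{-1}\,\mathsf{adj}([Q]_J)\,D_J^{-1}$, $D_J^{-1}\mathbb 1=[v_q]_J$, and recognition of the extension operator -- matches the paper's use of the Scaling Lemma (Lemma~\ref{lem:adjunct_lemma}) exactly. Your closing conjecture that the paper's own route is a determinant-manipulation closer to a Cayley--Menger/Schur-complement expansion is essentially right: the Determinant Lemma is precisely the difference-matrix identity that expansion would produce.
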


\begin{proof}
	Without loss of generality, we may assume $J=[\ell]$. Then Gram's determinant formula for the volume of a parallel-epiped implies
	\[
		\Big((\ell-1)!\ \mathsf{vol}_{\ell-1}(\Delta_J)\Big)^2
			=   \det\Big( 
						\langle \widetilde q_1-\widetilde q_i, \widetilde q_1-\widetilde q_j\rangle 
					\Big)_{i,j\in [\ell]\setminus\{1\}}. 
	\]
	Because of
	\[
		\Big( \langle \widetilde q_1-\widetilde q_i, \widetilde q_1-\widetilde q_j\rangle  \Big)_{i,j\in [\ell]\setminus\{1\}}
		= 
		\begin{pmatrix}[c|c] \ssvek{\mathbb 1} & -\ID_{\ell-1}\end{pmatrix}
			\cdot
			\Big( \langle \widetilde q_i, \widetilde q_j \rangle \Big)_{i,j\in [\ell]}
			\cdot
			\begin{pmatrix}[c|c] \ssvek{\mathbb 1} & -\ID_{\ell-1}\end{pmatrix}^\top		
	\]
	the Determinant Lemma~\ref{lem:determinant_lemma} in the next section implies
	\[
		\Big((\ell-1)!\ \mathsf{vol}_{\ell-1}(\Delta_J)\Big)^2
			=   \mathbb 1_\ell^\top 
				\cdot 
				\mathsf{adj}\Big( (\langle \widetilde q_i, \widetilde q_j \rangle)_{i,j\in [\ell]} \Big)
				\cdot \mathbb 1_\ell.
	\]
	Since
	\[
		\Big( \langle \widetilde q_i, \widetilde q_j \rangle \Big)_{i,j\in [\ell]}
			= \Big( 
				\frac{1}{\langle v_0,q_i\rangle \cdot \langle v_0,q_j\rangle}
				\langle q_i, q_j \rangle
			  \Big)_{i,j\in [\ell]}
			= \Big[\Big(D_{v_q}\Big)^{-1} \cdot Q \cdot \Big(D_{v_q}\Big)^{-1}\Big]_{[\ell]},
	\]
	where $D_{v_q}$ denotes the diagonal matrix with entries $\{v_q\}_{q \in [k]}$, an application of the Scaling Lemma~\ref{lem:adjunct_lemma} implies the claim:
	\begin{align*}
		\Big((\ell-1)!\ \mathsf{vol}_{\ell-1}(\Delta_J)\Big)^2
			&=      \sum_{i,j\in [\ell]} 
						\left(
							\mathsf{adj}\left[\big(D_{v_q}\big)^{-1} \cdot Q \cdot \big(D_{v_q}\big)^{-1} \right]_{[\ell]}
						\right)_{i,j}\\
			&= \left(\frac{1}{\prod_{j\in [\ell]}\langle v_0,q_j\rangle}\right)^2
					\sum_{i,j\in [\ell]} 
						\langle v_0,q_i\rangle \cdot
						\langle v_0,q_j\rangle \cdot
						\left(
							\mathsf{adj}\, [Q]_{[\ell]} 
						\right)_{i,j}\\
	\end{align*}
\end{proof}

\begin{proof}[Proof of Theorem \ref{thm:main1}]
	From Lemma~\ref{lem:volume_S_I} we have
	\[
		\Big((m+1)!\,\,\mathsf{vol}_{m+1}(S_I)\Big)^2
			= \beta^\top
				\cdot
				\Big(
					\big(\det [E]_{I}\big) E^{-1}
					-  \mathsf{Ext}^{[k]}_I\big(\mathsf{adj}\,[E]_I\big)
				\Big)
				\cdot 
				\beta
	\]
	Since $\det(E) \neq 0$, using first the Extension Lemma~\ref{prop:extension_lemma} (with $M:=E$, $r:=k$, $a:=|I|$ and $r-a:=\ell$) and then the
	Scaling Lemma~\ref{lem:adjunct_lemma} 
    (with $A := [E^{-1}]_J, r:=\ell$, $D_\lambda := D_\mu := \sqrt{\det E} \cdot {\rm{Id}}_\ell$), we obtain
	\begin{align*}
		\big(\det [E]_{I}\big) E^{-1}  -  \mathsf{Ext}^{[k]}_I\big(\mathsf{adj}\, [E]_I\big)
			 &= \frac{1}{(\det E)^{\ell-1}}
			 		\Big( (\det E)\cdot E^{-1}\Big)
					\Big(\mathsf{Ext}^{[k]}_J\big( {\mathsf{adj}}\, [(\det E)\cdot E^{-1}]_J\big)\Big)
					E^{-1}\\
			 &= (\det E)\cdot E^{-1}\cdot \mathsf{Ext}^{[k]}_J\big( {\mathsf{adj}}\, [E^{-1}]_J\big) \cdot E^{-1}.
	\end{align*}
	If $W=\det(\varepsilon_1,\ldots,\varepsilon_k)$ then the definition of $q_i$ implies $\langle q_i,\varepsilon_j\rangle = W\delta_{ij}$ and thus
	\[
		\begin{psmallmatrix} \zsvek{q_1} \\[-1mm] \vdots \\ \zsvek{q_k} \end{psmallmatrix}
			= W
			  \begin{psmallmatrix} \ssvek{\varepsilon_1} & \cdots & \ssvek{\varepsilon_k} \end{psmallmatrix}^{-1}.
	\]
	As $Q = W^2 E^{-1}$,
    \begin{equation} \label{eq:vqE-1beta}
    v_q = \begin{psmallmatrix} \zsvek{q_1} \\[-1mm] \vdots \\ \zsvek{q_k} \end{psmallmatrix} v_0 = W 
    \begin{psmallmatrix} \ssvek{\varepsilon_1} & \cdots & \ssvek{\varepsilon_k} \end{psmallmatrix}^{-1} v_0 = W E^{-1} \begin{psmallmatrix} \zsvek{\varepsilon_1} \\[-1mm] \vdots \\ \zsvek{\varepsilon_k}  \end{psmallmatrix} v_0 = W\cdot E^{-1} \beta, 
    \end{equation}
    and $\det E = W^2$, we obtain
	\begin{align*}
		\Big((m+1)!\,\,\mathsf{vol}_{m+1}(S_I)\Big)^2
			&= \beta^\top
				\cdot
				\left(
					(\det E)E^{-1} \cdot \mathsf{Ext}^{[k]}_J\big( {\mathsf{adj}}\, [E^{-1}]_J\big) \cdot E^{-1}
				\right)
				\cdot
				\beta\\
			&= \frac{1}{W^{2(\ell-1)}}
				\cdot
				v_q^\top
				\cdot 
				\mathsf{Ext}^{[k]}_J\big( {\mathsf{adj}}\, [Q]_J\big)
				\cdot
				v_q\\
			&= \frac{\big((\ell-1)!\prod_{j\in J}\langle v_0,q_j\rangle\big)^2}{W^{2(\ell-1)}}
				\Big(
					\mathsf{vol}_{\ell-1}(\Delta_J)
				\Big)^2,
	\end{align*}
    where the last identity follows from Lemma \ref{lem:volume_Delta_J}.
	The claim follows, as we have
	\begin{equation} \label{eq:volratiosq}
		\left(
			\frac{\mathsf{vol}_{\ell-1}(\Delta_J)}
				{\mathsf{vol}_{m+1}(S_I)}
		\right)^2
			= \left(
				\frac{(m+1)!W^{\ell-1}}
					{(\ell-1)!\,\,\prod_{j\in J}\langle v_0,q_j\rangle}
			  \right)^2,
	\end{equation}
	and since $W$ is, up to sign, the volume of the $k$-dimensional parallel-epiped $\E$ spanned by $\varepsilon_1,\dots,\varepsilon_k$.
\end{proof}

\medskip
\noindent
\textbf{Remark:}
The derivation in \eqref{eq:vqE-1beta} shows that $\langle v_0, q_j \rangle = W \cdot (E^{-1}\beta)_j$, and therefore 
%
equation \eqref{eq:volratiosq} can be rephrased as
\[
	\frac{\vol_{\ell-1}(\Delta_J)}{\vol_{m+1}(S_I)}
		= \frac{(m+1)!}
			   {|W| (\ell-1)! \prod_{j \in J} \big| (E^{-1}\beta)_j \big|}
\]
where 
\[
	W = \big| \det ( \varepsilon_1, \ldots, \varepsilon_k )\,\big|,\qquad 
	E := \Big( 
			\langle \varepsilon_r,\varepsilon_s \rangle 
		 \Big) 
		 \in \R^{k\times k},
	\qquad\text{and}\qquad
	\beta := \begin{pmatrix} 
				\langle v_i,\varepsilon_1 \rangle \\
   				\vdots \\ 
				\langle v_i,\varepsilon_k \rangle
			 \end{pmatrix}.
\]
			   

\section{Three fundamental lemmata about the adjunct matrix}\label{sec:three_fundamental_statements}

This section provides the three lemmas used in the previous section for the proof of Theorem \ref{thm:main1}. They are all 
basic statements related to the adjunct of a square matrix.

The first statement, Lemma~\ref{lem:adjunct_lemma}, follows immediately from the multilinearity of the determinant with 
respect to its rows and columns. 

\begin{lemma}[Scaling Lemma]\label{lem:adjunct_lemma}$ $\\ 
	Let $A=(a_{ij})_{i,j \in [r]}\in\R^{r\times r}$ and $\lambda,\mu\in\R^r$. If we define the diagonal matrices
	$D_\lambda=\sum_{i\in[r]}\lambda_ie_i \cdot e_i^\top$ and $D_\mu=\sum_{i\in[r]}\mu_ie_i \cdot e_i^\top$, then
	\[
		\big(\mathsf{adj}(D_\lambda\cdot A\cdot D_\mu)\big)_{s,t}
		=
		\frac{\prod_{i,j\in [r]}\lambda_i\mu_j}
			{\lambda_t\mu_s}
		\cdot
		\big(\mathsf{adj}\, A\big)_{s,t}
		\qquad\text{for all $s,t\in[r]$.}
	\]
\end{lemma}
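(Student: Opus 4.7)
The statement is essentially an exercise in multilinearity of the determinant, so my plan is to unfold the definition of the adjunct and factor the row/column scalings out one minor at a time.

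First I would fix indices $s,t \in [r]$ and write, directly from the definition,
\[
    \big(\mathsf{adj}(D_\lambda A D_\mu)\big)_{s,t}
        = (-1)^{s+t} \det\!\big((D_\lambda A D_\mu)^{t,s}\big).
\]
The key observation is that $(D_\lambda A D_\mu)_{ij} = \lambda_i\, a_{ij}\, \mu_j$, so the matrix $(D_\lambda A D_\mu)^{t,s}$ (obtained by deleting row $t$ and column $s$) has entries $\lambda_i a_{ij} \mu_j$ with $i\in[r]\setminus\{t\}$ and $j\in[r]\setminus\{s\}$. In other words,
\[
    (D_\lambda A D_\mu)^{t,s} \;=\; [D_\lambda]_{[r]\setminus\{t\}}\cdot A^{t,s}\cdot [D_\mu]_{[r]\setminus\{s\}}.
\]

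Next I would apply multilinearity row by row and column by column: factoring $\lambda_i$ out of each remaining row and $\mu_j$ out of each remaining column yields
\[
    \det\!\big((D_\lambda A D_\mu)^{t,s}\big)
        = \Big(\prod_{i\in[r]\setminus\{t\}}\lambda_i\Big)
          \Big(\prod_{j\in[r]\setminus\{s\}}\mu_j\Big)
          \det(A^{t,s}).
\]
Rewriting the two partial products as $\big(\prod_{i\in[r]}\lambda_i\big)/\lambda_t$ and $\big(\prod_{j\in[r]}\mu_j\big)/\mu_s$, and absorbing the sign $(-1)^{s+t}$ together with $\det(A^{t,s})$ back into $(\mathsf{adj}\,A)_{s,t}$, gives precisely the claimed identity.

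There is no real obstacle here: the only point worth double-checking is the index swap between $(s,t)$ (the adjunct entry) and $(t,s)$ (the minor), i.e., that the row index $t$ is paired with $\lambda_t$ in the denominator and the column index $s$ is paired with $\mu_s$. Since the product $\prod_{i\in[r]}\lambda_i\cdot\prod_{j\in[r]}\mu_j$ is symmetric in the two factors, the final formula reads cleanly as
\[
    \big(\mathsf{adj}(D_\lambda A D_\mu)\big)_{s,t}
    = \frac{\prod_{i\in[r]}\lambda_i\cdot\prod_{j\in[r]}\mu_j}{\lambda_t\mu_s}\,(\mathsf{adj}\,A)_{s,t},
\]
which matches the statement.
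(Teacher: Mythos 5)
Your proof is correct and follows essentially the same route as the paper: unfold the definition of the adjunct, observe that $(D_\lambda A D_\mu)_{ij}=\lambda_i a_{ij}\mu_j$, and factor the row and column scalars out of the minor via multilinearity, with the quotient $\lambda_t\mu_s$ accounting for the deleted row~$t$ and column~$s$. Your version is even a touch more explicit in writing the deleted-row/column minor as $[D_\lambda]_{[r]\setminus\{t\}}\cdot A^{t,s}\cdot [D_\mu]_{[r]\setminus\{s\}}$, but the argument is the same.
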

\begin{proof}
	From $D_\lambda\cdot A\cdot D_\mu = \left(\lambda_i\mu_ja_{ij}\right)_{i,j\in[r]}$, we obtain
    \[
		\det\left(\big(D_\lambda\cdot A\cdot D_\mu\big)^{t,s} \right)
			= 
			\frac{\prod_{i,j\in [r]}\lambda_i\mu_j}
				{\lambda_t\mu_s}
			\det (A^{t,s})
	\]
	and the claim follows.
\end{proof}

\medskip
The second statement, Lemma~\ref{lem:determinant_lemma}, follows also from the multilinearity of the
determinant in its rows and columns but is less obvious.

\begin{lemma}[Determinant Lemma]\label{lem:determinant_lemma}$ $\\ 
	Let $A = (a_{ij}) \in\R^{r\times r}$ and $T:=\begin{pmatrix}[c|c] \ssvek{\mathbb 1} & -\ID_{r-1}\end{pmatrix}\in \R^{(r-1)\times r}$. 
	Then 
	\[
		\det\big( 
			T \cdot A \cdot T^\top
		\big)
		=
		\mathbb 1_r^\top \cdot \mathsf{adj}\, A \cdot \mathbb 1_r.
	\]
\end{lemma}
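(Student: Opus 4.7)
My plan is to expand $\det(T\cdot A\cdot T^\top)$ by Cauchy--Binet into a double sum over $(r-1)$-subsets of $[r]$, and then recognise the resulting expression as $\mathbb 1_r^\top\cdot\mathsf{adj}\,A\cdot\mathbb 1_r$. Concretely, applying Cauchy--Binet twice (once to the outer product $T\cdot(AT^\top)$ and once inside to $A\cdot T^\top$), and writing $T_{[r-1],S}$ for the $(r-1)\times(r-1)$ submatrix of $T$ with columns indexed by $S$ and $A_{S,S'}$ for the submatrix of $A$ with rows in $S$ and columns in $S'$, I obtain
\[
  \det(TAT^\top)=\sum_{S,S'\in\binom{[r]}{r-1}}\det(T_{[r-1],S})\cdot\det(A_{S,S'})\cdot\det(T_{[r-1],S'}).
\]

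Parametrising the subsets by their missing element, $S=[r]\setminus\{k\}$ and $S'=[r]\setminus\{k'\}$, the next step is to evaluate the maximal minors of $T$ explicitly. Since $T\mathbb 1_r=0$, the vector $\mathbb 1_r$ spans the kernel of $T$, and this already forces these minors to agree up to the alternating sign $(-1)^k$. Evaluating a single one of them directly, $\det(T_{[r-1],[r]\setminus\{1\}})=\det(-\ID_{r-1})=(-1)^{r-1}$, then pins down the uniform formula
\[
  \det(T_{[r-1],[r]\setminus\{k\}})=(-1)^{k+r}\qquad\text{for every }k\in[r].
\]
A short column-swap calculation --- moving the column $\mathbb 1$ into the slot freed by the removed column and reducing the resulting matrix to a diagonal matrix with entries in $\{\pm 1\}$ --- confirms the same identity directly. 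The middle factor in the double sum is rewritten via the very definition of the adjunct: $\det(A_{[r]\setminus\{k\},[r]\setminus\{k'\}})=\det(A^{k,k'})=(-1)^{k+k'}(\mathsf{adj}\,A)_{k',k}$.

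Substituting all three pieces into the double sum, the combined sign $(-1)^{k+r}\cdot(-1)^{k+k'}\cdot(-1)^{k'+r}=1$ for every $k,k'\in[r]$, so the signs cancel uniformly and
\[
  \det(TAT^\top)=\sum_{k,k'\in[r]}(\mathsf{adj}\,A)_{k',k}=\mathbb 1_r^\top\cdot\mathsf{adj}\,A\cdot\mathbb 1_r,
\]
as required. The main obstacle will be the sign bookkeeping in the evaluation of the minors of $T$; once those signs are correctly pinned down, everything else is routine --- Cauchy--Binet is just the multilinearity of the determinant in its rows and columns (exactly as anticipated in the remark preceding the lemma), and the translation of a minor of $A$ into an entry of $\mathsf{adj}\,A$ is a tautology.
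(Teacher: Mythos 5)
Your proof is correct and takes a genuinely different route from the paper's. The paper performs the expansion by hand: it views $T A T^\top$ as a matrix of column differences $(b_1-b_2,\ldots,b_1-b_r)$, uses column multilinearity and the vanishing of a determinant with a repeated column to arrive at $(-1)^{r-1}\sum_s(-1)^{s-1}\det(b_1,\ldots,\widehat{b_s},\ldots,b_r)$, and then repeats the same manoeuvre for rows inside each of those determinants, landing on $\sum_{s,t}(-1)^{s+t}\det(A^{t,s})$. You instead invoke Cauchy--Binet twice, which packages exactly that bookkeeping: the double sum over $(r-1)$-subsets $S,S'$ is the abstract version of the paper's double sum over the omitted indices $s,t$, and the maximal minors $\det(T_{[r-1],[r]\setminus\{k\}})=(-1)^{k+r}$ that you compute are precisely the signs $(-1)^{r-1}(-1)^{s-1}$ produced by the paper's column-by-column expansion. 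Your computation of these minors and the final sign cancellation $(-1)^{k+r}(-1)^{k+k'}(-1)^{k'+r}=1$ are both correct, as is the translation $\det(A^{k,k'})=(-1)^{k+k'}(\mathsf{adj}\,A)_{k',k}$ in the paper's convention. What the Cauchy--Binet route buys is conceptual economy and a clean separation between the linear-algebra input and the sign bookkeeping; what the paper's route buys is self-containedness, as it never needs to cite Cauchy--Binet by name and uses only the multilinearity of the determinant, which fits the tone announced in the remark preceding the lemma. One small caution: your initial appeal to $T\mathbb{1}_r=0$ only determines the maximal minors up to a common scalar, not to the specific values $(-1)^{k+r}$; you do then pin them down by direct evaluation, so the argument is complete, but the kernel observation alone would not suffice.
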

\begin{proof}
	Denote row~$i$ of $A$ by~$a_i$ and column~$i$ of $T\cdot A$ by~$b_i$. Then 
	$T\cdot A\cdot T^\top\in \R^{(r-1)\times(r-1)}$ satisfies
	\[
		T \cdot A \cdot T^\top
			= \begin{pmatrix}  \zsvek{(a_1-a_2)} \\[-1mm] \vdots \\ \zsvek{(a_1 - a_r)} \end{pmatrix} \cdot T^\top
			=: \begin{pmatrix}  \ssvek{b_1}			& \cdots & \ssvek{b_r}			\end{pmatrix} \cdot T^\top
			= \begin{pmatrix}  \ssvek{(b_1-b_2)}	& \cdots & \ssvek{(b_1-b_r)}	\end{pmatrix}.
	\]
	Since the determinant is linear all columns and
    vanishes if two columns coincide, 
    we obtain
	\[
		\det
			\begin{pmatrix}  \ssvek{(b_1-b_2)}	& \cdots & \ssvek{(b_1-b_r)}	\end{pmatrix}
		=
		(-1)^{r-1}
			\left( \det
				\begin{pmatrix}  \ssvek{b_2}			& \cdots & \ssvek{b_r}			\end{pmatrix}
				-
				\sum_{s=2}^r
					\det \begin{pmatrix}  \ssvek{b_2}			& \cdots & \ssvek{\widetilde b_s} & \cdots & \ssvek{b_r}			\end{pmatrix}
			\right)
	\]
	with $\widetilde b_s = b_1$ for $2\leq s \leq r$. Consequently,
    \begin{equation} \label{eq:detrel}
         \det
			\begin{pmatrix}  \ssvek{(b_1-b_2)}	& \cdots & \ssvek{(b_1-b_r)}	\end{pmatrix}
		= (-1)^{r-1} \sum_{s=1}^r (-1)^{s-1} \det \begin{pmatrix}  \ssvek{b_1}			& \cdots & \ssvek{\widehat{b_s}} & \cdots & \ssvek{b_r}			\end{pmatrix},
    \end{equation}
    where $\widehat{b_s}$ means that this column is deleted.
	\noindent
	Introducing
    \[
		C_s := \begin{pmatrix}  \ssvek{b_1}			& \cdots & \ssvek{\widehat{b_s}} & \cdots & \ssvek{b_r}			\end{pmatrix} = \begin{pmatrix}  \zsvek{(a_1^s-a_2^s)} \\[-1mm] \vdots \\ \zsvek{(a_1^s - a_r^s)} \end{pmatrix}
			 \in \R^{(r-1)\times r}, 
	\]
 where $a_r^s$ is the row vector $a_r$ with the $s$-th entry removed. Similarly to \eqref{eq:detrel}, but now considering rows instead of columns, we obtain 
    \begin{multline*} 
    \det C_s = \det \begin{pmatrix}  \zsvek{(a_1^s-a_2^s)} \\[-1mm] \vdots \\ \zsvek{(a_1^s - a_r^s)} \end{pmatrix} = (-1)^{r-1} \sum_{t=1}^r (-1)^{t-1} \det \begin{pmatrix}  \zsvek{a_1^s} \\[-1mm] \vdots \\ \zsvek{\widehat{a_t^s}} \\[-1mm] \vdots \\[-1mm] \zsvek{a_r^s} \end{pmatrix} \\ = (-1)^{r-1} \sum_{t=1}^r (-1)^{t-1} \det(A^{t,s}),
    \end{multline*}
    where $\widehat{a_t^s}$ means that this row is deleted. This shows
	\begin{align*}
		\det\big( T	\cdot A \cdot T^\top \big)
			&= (-1)^{r-1} \sum_{s=1}^r (-1)^{s-1} \det C_s \\
            & = (-1)^{r-1} \sum_{s=1}^r (-1)^{s-1} \left( (-1)^{r-1} \sum_{t=1}^r (-1)^{t-1} \det (A^{t,s}) \right) \\
 	        &= \sum_{s,t=1}^r (-1)^{s+t} \det (A^{t,s})
            = \sum_{s,t=1}^r \big( \mathsf{adj}\, A \big)_{s,t} = \mathbb 1_r^\top \cdot \mathsf{adj}\, A \cdot \mathbb 1_r.
    \end{align*}
	and the claim follows.
\end{proof}

\noindent
As $A^{-1}=\frac{1}{\det(A)}\cdot\mathsf{adj}\, A$ for all $A\in\operatorname{GL}(r)$, Lemma~\ref{lem:determinant_lemma} 
implies the following corollary.
\begin{corollary}$ $\\
	Let $A\in\operatorname{GL}(r)$ and $T:=\begin{pmatrix}[c|c] \ssvek{\mathbb 1} & -\ID_{r-1}\end{pmatrix}\in \R^{(r-1)\times r}$. Then 
	\[
		\det\big( T \cdot A \cdot T^\top \big)
		=
		\det(A) \mathbb 1_r^\top \cdot A^{-1} \cdot \mathbb 1_r.
	\]
\end{corollary}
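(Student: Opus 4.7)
The plan is to derive this corollary as an immediate consequence of the Determinant Lemma (Lemma~\ref{lem:determinant_lemma}) combined with the standard linear-algebra identity relating the adjunct of an invertible matrix to its inverse. Since the hypothesis here strengthens that of Lemma~\ref{lem:determinant_lemma} by assuming $A\in\operatorname{GL}(r)$, there is no new combinatorial content to establish; the only move is to rewrite the right-hand side.

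First, I would invoke Lemma~\ref{lem:determinant_lemma} verbatim to obtain
\[
\det\big( T \cdot A \cdot T^\top \big) = \mathbb 1_r^\top \cdot \mathsf{adj}\, A \cdot \mathbb 1_r.
\]
Then I would use the fact, recalled in the paragraph preceding the corollary, that for any $A\in\operatorname{GL}(r)$ we have $A\cdot \mathsf{adj}\, A = (\det A)\ID_r$, hence $\mathsf{adj}\, A = (\det A)\cdot A^{-1}$. Substituting this identity into the displayed equation and pulling the scalar $\det(A)$ outside the quadratic form yields exactly
\[
\det\big( T \cdot A \cdot T^\top \big) = \det(A)\,\mathbb 1_r^\top \cdot A^{-1} \cdot \mathbb 1_r,
\]
which is the claim. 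There is no real obstacle: the entire argument is a one-line substitution, and the invertibility hypothesis is used solely to ensure that $A^{-1}$ exists so that the rewriting $\mathsf{adj}\, A = \det(A)\cdot A^{-1}$ is meaningful.
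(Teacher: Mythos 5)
Your proposal is correct and matches the paper's own derivation exactly: the paper states the corollary as an immediate consequence of Lemma~\ref{lem:determinant_lemma} together with the identity $A^{-1}=\frac{1}{\det(A)}\,\mathsf{adj}\,A$ for $A\in\operatorname{GL}(r)$. There is nothing to add.
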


\medskip
The third statement, Proposition~\ref{prop:extension_lemma}, relates $M$ and its adjoint $N:=\mathsf{adj}\, M$ 
to square matrices $\ext{[r]}{I} (\mathsf{adj}\, [M]_{I})$ and $\ext{[r]}{J} (\mathsf{adj}\, [N]_{J})$ that extend
the adjoints of $[M]_{I}$ and $[N]_{J}$ which are restrictions of $M$ and $N$. 

\begin{proposition}[Extension Lemma]\label{prop:extension_lemma}$ $\\ 
	Let $M\in\R^{r\times r}$ and $I\in\binom{[r]}{a}$ and consider $N:=\mathsf{adj}\, M$ and 
	$J:=[r]\setminus I$. Then
	\[
		(\det M)^{r-a}(\det\, [M]_I) 
		\ID_r
		=
		(\det M)^{r-a} 
			\big(\ext{[r]}{I} (\mathsf{adj}\, [M]_{I}) \big)
			\cdot
			M
		+ (\det M) 
			N
			\cdot
			\big( \ext{[r]}{J} (\mathsf{adj}\, [N]_{J}) \big).	
	\]
	\noindent
	In particular, if $\det M \neq 0$ then
	\[
		\big(\det[M]_I\big)M^{-1}
		-
		\ext{[r]}{I}(\mathsf{adj}\, [M]_{I})
		=
		\frac{1}{(\det M)^{r-a-1}}
			N
			\cdot
			\big(
				\ext{[r]}{J} (\mathsf{adj}\, [N]_{J}) 
			\big)
			\cdot
			M^{-1}
	\]
\end{proposition}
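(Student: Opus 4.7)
My plan is to reduce to the case where both $M$ and $[M]_I$ are invertible. Both sides of the asserted identity are polynomial in the entries of $M$, since the adjunct, the restriction operator $[\,\cdot\,]_I$, the extension operator $\ext{[r]}{\,\cdot\,}$, and $\det$ all preserve polynomial dependence. It therefore suffices to verify the identity on the Zariski-dense open subset of $\R^{r\times r}$ on which $\det M\cdot \det[M]_I\neq 0$, and then invoke continuity to extend it to all $M\in\R^{r\times r}$.

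On this open subset I would use $N=(\det M)M^{-1}$ together with the scaling property $\mathsf{adj}(cX)=c^{s-1}\mathsf{adj}\,X$ for $s\times s$ matrices (with $s=r-a$, a special case of Lemma~\ref{lem:adjunct_lemma}) to rewrite
\[
\ext{[r]}{J}(\mathsf{adj}\,[N]_J)=(\det M)^{r-a-1}\ext{[r]}{J}(\mathsf{adj}\,[M^{-1}]_J).
\]
Substituting this into the claim and cancelling a factor of $(\det M)^{r-a}$, the statement reduces to
\[
(\det[M]_I)\ID_r=\ext{[r]}{I}(\mathsf{adj}\,[M]_I)\cdot M+(\det M)\cdot M^{-1}\cdot \ext{[r]}{J}(\mathsf{adj}\,[M^{-1}]_J).
\]

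After a harmless permutation to assume $I=[a]$ and $J=\{a+1,\dots,r\}$, I would then put $M$ in block form
\[
M=\begin{pmatrix} A & B\\ C & D\end{pmatrix}\quad\text{with}\quad A=[M]_I,\ D=[M]_J,
\]
and verify the displayed identity block-by-block. Because the extension operators act as zero-padding, the $(I,I)$-block on the right reduces to $\mathsf{adj}(A)\cdot A=(\det A)\ID_a$, the $(J,I)$-block is manifestly zero, and the $(J,J)$-block simplifies to $(\det M)(\det[M^{-1}]_J)\ID_{r-a}$. Matching the $(J,J)$-block against the left-hand side is then exactly Jacobi's complementary-minor identity $\det[M^{-1}]_J=\det[M]_I/\det M$.

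The remaining, and most substantial, check is the off-diagonal $(I,J)$-block, where the identity specialises to
\[
0=\mathsf{adj}(A)\cdot B+(\det M)\cdot [M^{-1}]_{I,J}\cdot \mathsf{adj}\,[M^{-1}]_J.
\]
The hard part will be recognising this as a consequence of the block-matrix inversion formula: dividing through by $(\det A)(\det[M^{-1}]_J)$ and using $\mathsf{adj}\,X=(\det X)X^{-1}$ converts it into $A^{-1}B=-[M^{-1}]_{I,J}\cdot[M^{-1}]_J^{-1}$, which is precisely the Schur-complement identity $[M^{-1}]_{I,J}=-A^{-1}B\cdot [M^{-1}]_J$. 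Both Jacobi's identity and the Schur-complement relation follow in turn from $M\cdot M^{-1}=\ID_r$ written in block form. The \emph{in particular} clause is then obtained at once by right-multiplying the main identity by $M^{-1}$ and using $N=(\det M)M^{-1}$.
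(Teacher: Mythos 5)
Your proposal is correct, and it follows the same overall strategy as the paper's proof of the Sorted Extension Lemma (Lemma~\ref{lem:sorted_extension_lemma}): after permuting to $I=[a]$, write $M$ in block form, compute the two zero-padded products, and check the four blocks. The LHS is $(\det[M]_I)\ID_r$ up to a factor of $(\det M)^{r-a}$, the two diagonal blocks and the lower-left block come for free, and everything reduces to one determinantal identity for the $(J,J)$-block and one cancellation for the $(I,J)$-block.

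The point of departure is how you reach those two facts. The paper works throughout with $N=\mathsf{adj}\,M$ rather than $M^{-1}$; its claim~(i), $(\det M)(\det N_2)=(\det M_1)(\det M)^{r-a}$, is proved by a direct block factorisation using only $MN=(\det M)\ID_r$ and so holds even when $\det M=0$, and its claim~(ii) is derived from claim~(i) under the single extra assumption that $M_1\in\GL(a)$, removed afterwards by a deformation $M_1\mapsto M_1+\epsilon\ID_a$ and continuity. You instead invoke the global density of $\{\det M\cdot\det[M]_I\neq 0\}$ at the outset, divide through to pass to $M^{-1}$, and then identify the two remaining facts as Jacobi's complementary-minor identity and the Schur-complement (block-inversion) relation. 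This is a perfectly legitimate reorganisation: both of those named identities are exactly the $(J,J)$ and $(I,J)$ blocks of $MM^{-1}=\ID_r$, so the underlying computation is the same as the paper's. What you gain is that the verification reads off from standard facts; what you give up is self-containedness, and you also impose the slightly stronger genericity hypothesis $\det M\neq 0$ on the dense set, whereas the paper's continuity argument only needs $\det[M]_I\neq 0$. One small caveat: you should say explicitly that for the density argument the index sets $I,J$ are held fixed while $M$ varies, so both sides really are polynomial functions on $\R^{r\times r}$; you clearly have this in mind, but it is the crux of the reduction and deserves a sentence. The degenerate cases $a\in\{0,r\}$ are covered automatically by your framework (the empty block has determinant $1$, and the display collapses to $(\det M)\ID_r=NM$ or $(\det M)^r\ID_r=(\det M)N\,\mathsf{adj}\,N$), but it would not hurt to note this, as the paper treats them separately.
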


We specialise Proposition~\ref{prop:extension_lemma} to $I=[a]$ and $J=[r]\setminus[a]$, and the proposition follows from Lemma \ref{prop:extension_lemma} below, by sorting a basis appropriately. Note that in this special case, we have for $X\in \R^{a\times a}$ and $Y\in\R^{(r-a)\times(r-a)}$,
\[
	\ext{[r]}{[a]}(X)
		= 	\left(\,\begin{matrix}[c|c] 
						X		  & \mathbb 0\\ \hline 
						\mathbb 0 & \mathbb 0
			\end{matrix}\,\right)
	\qquad\text{and}\qquad
	\ext{[r]}{[r]\setminus[a]}(Y)
		= \left(\,\begin{matrix}[c|c] 
						\mathbb 0 & \mathbb 0\\ \hline 
						\mathbb 0 & Y	
			\end{matrix}\,\right).
\]

\begin{lemma}[Sorted Extension Lemma]\label{lem:sorted_extension_lemma}$ $\\
	Let $M\in\R^{r\times r}$ and $0\le a \le r$. Let $M_1 \in \R^{a \times a}$ and 
	$N_2 \in \R^{(r-a)\times(r-a)}$ be given by
	\[
		M =: \left(\,\begin{matrix}[c|c] 
						M_1 & *\\ \hline 
						* 	& * 
			\end{matrix}\,\right)
		\qquad\text{and}\qquad 
		N:=\mathsf{adj}(M)
			=:\left(\,\begin{matrix}[c|c] 
						*	& *\\\hline 
						* 	& N_2 
			 \end{matrix}\,\right).
	\]
	Then
	\begin{equation} \label{eq:sortres}
		(\det M)^{r-a}(\det M_1) 
		\ID_r
		=
		(\det M)^{r-a} 
			\left(\,\begin{matrix}[c|c] 
						\mathsf{adj}\, M_1 & \mathbb 0\\ \hline 
						\mathbb 0 & \mathbb 0
			\end{matrix}\,\right)
			\cdot
			M
		+
		(\det M) 
			N
			\cdot
			\left(\,\begin{matrix}[c|c] 
						\mathbb 0 & \mathbb 0\\ \hline 
						\mathbb 0 & \mathsf{adj}\, N_2	
			\end{matrix}\,\right).
	\end{equation}
\end{lemma}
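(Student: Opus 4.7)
The plan is to verify \eqref{eq:sortres} block-by-block. Split $M$ and $N=\mathsf{adj}\,M$ into matching block form of sizes $a$ and $r-a$:
\[
M = \left(\begin{matrix}[c|c] M_1 & M_2 \\ \hline M_3 & M_4 \end{matrix}\right), \qquad N = \left(\begin{matrix}[c|c] N_1 & N_3 \\ \hline N_4 & N_2 \end{matrix}\right).
\]
Expanding $MN = (\det M)\ID_r$ blockwise gives four identities, of which I shall only need $M_1 N_3 + M_2 N_2 = \mathbb 0$ and $M_3 N_3 + M_4 N_2 = (\det M)\ID_{r-a}$.

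Multiplying out the right-hand side of \eqref{eq:sortres} produces the block matrix
\[
\left(\begin{matrix}[c|c] (\det M)^{r-a}(\det M_1)\ID_a & (\det M)^{r-a}\mathsf{adj}(M_1)M_2 + (\det M)\,N_3\,\mathsf{adj}(N_2) \\ \hline \mathbb 0 & (\det M)(\det N_2)\ID_{r-a} \end{matrix}\right),
\]
which must equal the diagonal left-hand side $(\det M)^{r-a}(\det M_1)\ID_r$. The two left blocks agree trivially via $\mathsf{adj}(M_1) M_1 = (\det M_1)\ID_a$. Agreement of the lower-right block reduces to the Jacobi-type identity $\det N_2 = (\det M)^{r-a-1}\det M_1$, which I plan to derive from the two block relations above: when $\det M_1 \neq 0$, eliminating $N_3 = -M_1^{-1}M_2 N_2$ from the second yields $N_2 = (\det M)(M_4 - M_3 M_1^{-1}M_2)^{-1}$, and taking determinants together with the Schur complement formula $\det M = (\det M_1)\det(M_4 - M_3 M_1^{-1}M_2)$ produces the identity.

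The main obstacle is the upper-right block, whose matching condition reads
\[
(\det M)^{r-a}\mathsf{adj}(M_1)M_2 + (\det M)\,N_3\,\mathsf{adj}(N_2) = \mathbb 0.
\]
The plan is to start from $M_1 N_3 = -M_2 N_2$, multiply on the left by $\mathsf{adj}(M_1)$ and on the right by $\mathsf{adj}(N_2)$, and invoke $\mathsf{adj}(M_1)M_1 = (\det M_1)\ID_a$ together with $N_2\,\mathsf{adj}(N_2) = (\det N_2)\ID_{r-a}$ to obtain
\[
(\det M_1)\,N_3\,\mathsf{adj}(N_2) = -(\det N_2)\,\mathsf{adj}(M_1)M_2.
\]
Substituting the Jacobi identity, cancelling the scalar factor $\det M_1$, and multiplying by $\det M$ gives the required vanishing — valid initially on the open set $\{\det M_1 \neq 0\}$.

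Since every entry of \eqref{eq:sortres} is a polynomial in the entries of $M$, the identity extends by continuity to all of $\R^{r\times r}$, which settles the singular case. The only subtle point is the cancellation of the scalar $\det M_1$, which forces the density argument; everything else is routine block multiplication.
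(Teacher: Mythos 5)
Your proof is correct and follows essentially the same route as the paper's: you decompose $M$ and $N=\mathsf{adj}\, M$ into matching $2\times 2$ blocks, extract from $MN=(\det M)\ID_r$ the two block relations, reduce \eqref{eq:sortres} to (a) the Jacobi-type determinant identity $\det N_2 = (\det M)^{r-a-1}\det M_1$ and (b) the vanishing of the upper-right block, and then settle both on a dense open set before passing to the limit. The argument for (b) -- left-multiplying $M_1N_3=-M_2N_2$ by $\mathsf{adj}(M_1)$, right-multiplying by $\mathsf{adj}(N_2)$, substituting the Jacobi identity and cancelling $\det M_1$ -- is the same as in the paper, just phrased with adjugates instead of inverses.

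The one place where the paths genuinely diverge is the Jacobi identity itself. Your derivation goes through the Schur complement formula and the matrix inverse $N_2 = (\det M)(M_4 - M_3M_1^{-1}M_2)^{-1}$, so it needs $\det M_1 \neq 0$ \emph{and} $\det M \neq 0$, and then you invoke density. The paper instead obtains the same identity by a direct block factorisation of $\det\bigl(M\cdot\bigl(\begin{smallmatrix}\ID_a & 0 \\ 0 & N_2\end{smallmatrix}\bigr)\bigr)$ into the product of a lower-block-triangular and an upper-block-triangular matrix, which is an unconditional polynomial identity requiring no invertibility at all. That makes the paper's version of (a) cleaner (only step (b) then needs a limiting argument), while your version pushes all the invertibility issues into the Zariski-closure step. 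Both are sound; your global polynomial-continuity argument is actually a little slicker than the paper's one-parameter perturbation $M_1 + \epsilon\ID_a$, and it does automatically cover the $\det M = 0$ cases you silently needed. You should, however, also note the degenerate cases $a=0$ and $a=r$, for which the blockwise computation does not make sense; the paper checks them separately (they reduce to $(\det M)\ID = \mathsf{adj}(M)\cdot M$ and to $\det(\mathsf{adj}\, M) = (\det M)^{r-1}$) and you would have to as well.
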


\begin{proof}$ $
	
	We first prove the degenerate situations $a=r$ and $a=0$. If $a=r$ then $M_1=M$ and the claim reduces to the standard 
	result $(\det M) \ID_r	= \mathsf{adj}\, M \cdot M + (\det M) N \cdot \mathbb 0$. If $a=0$ then $N=N_2\in\R^{r\times r}$ 
	and $N=\mathsf{adj}\, M$ which implies $(\det M)(\det N)
			= (\det M)^r$.
	Thus we get the claim by
	\[
		(\det M)^{r-0} \ID_r
			= (\det M)^{r-0} \mathbb 0 \cdot M
				+ (\det M)N\cdot \mathsf{adj}\, N.
	\]

	\medskip
	If $0< a <r$ then 
	\[
		M=\left(\,\begin{matrix}[c|c]
			 		M_1 & A\\ \hline 
					B	& M_2 
		  \end{matrix}\,\right)
		\qquad\text{and}\qquad
		N=\mathsf{adj}\, M=\left(\,\begin{matrix}[c|c] 
					N_1 & C\\ \hline 
					D 	& N_2 
		  \end{matrix}\,\right)
	\]
	yields
	\[
		\left(\,\begin{matrix}[c|c] 
			\mathsf{adj}\, M_1 & \mathbb 0\\ \hline 
			\mathbb 0 & \mathbb 0
		\end{matrix}\,\right)
		\cdot
		M
			=
			\left(\,\begin{matrix}[c|c] 
				(\det M_1)\ID_a	& \mathsf{adj}\, M_1\cdot A\\ \hline
				\mathbb 0 				& \mathbb 0
			\end{matrix}\,\right)
	\]
	and
	\[
		N
		\cdot
		\left(\,\begin{matrix}[c|c]
			\mathbb 0 & \mathbb 0\\ \hline
			\mathbb 0 & \mathsf{adj}\, N_2
		\end{matrix}\,\right)
		=
		\left(\,\begin{matrix}[c|c]
			\mathbb 0 & C\cdot\mathsf{adj}\, N_2\\ \hline
			\mathbb 0 & (\det N_2) \ID_{r-a}
		\end{matrix}\,\right).
	\]
	Hence $\alpha,\beta\in \R$ imply
	\begin{equation} \label{eq:sortextproof}
		\alpha
		\left(\,\begin{matrix}[c|c]
			\mathsf{adj}\, M_1	& \mathbb 0\\ \hline 
			\mathbb 0 			& \mathbb 0
		\end{matrix}\,\right)
		\cdot
		M
		+
		\beta
		N
		\cdot
		\left(\,\begin{matrix}[c|c]
			\mathbb 0 & \mathbb 0\\ \hline
			\mathbb 0 & \mathsf{adj}\, N_2
		\end{matrix}\,\right)
		=
		\left(\,\begin{matrix}[c|c] 
			\alpha(\det M_1) \ID_a	& \alpha\mathsf{adj}\, M_1\cdot A + \beta C\cdot\mathsf{adj}\, N_2\\ \hline
			\mathbb 0						& \beta(\det N_2) \ID_{r-a}
			\end{matrix}\,\right).		
	\end{equation}
	If we set $\alpha:=(\det M)^{r-a}$ and $\beta:=\det M$ then the left hand side of \eqref{eq:sortextproof} agrees with the right hand side of \eqref{eq:sortres} and it remains to prove the following claims
	\begin{compactenum}[i)]
		\item $(\det M)(\det N_2) = (\det M_1)(\det M)^{r-a}$;\label{claim_i}
		\item $(\det M)C \cdot \mathsf{adj}\, N_2 = - (\det M)^{r-a} \mathsf{adj}\, M_1 \cdot A$.\label{claim_ii}
	\end{compactenum}
	Before proving these claims, we notice that
	\[
		(\det M)	
		\left(\,\begin{matrix}[c|c]
				\ID_{a}	& \mathbb 0\\ \hline
				\mathbb 0		& \ID_{r-a} 
		\end{matrix}\,\right)
		= 
		M \cdot \mathsf{adj}\, M
		= M \cdot N =
		\left(\,\begin{matrix}[c|c]
			M_1\cdot N_1 + A\cdot D	& M_1\cdot C + A \cdot N_2\\ \hline
			B\cdot N_1 + M_2\cdot D	& B\cdot C + M_2\cdot N_2 
		\end{matrix}\,\right)
	\]
	implies
	\[
		A\cdot N_2 = -M_1 \cdot C
		\qquad\text{and}\qquad
		M_2\cdot N_2 = (\det M)\cdot \ID_{r-a} - B\cdot C.
	\]
	Claim~\ref{claim_i}) is a consequence of the following computation: 
	\begin{align*}
		(\det M)  (\det N_2)
			&= \det\left(
						\left(\,\begin{matrix}[c|c]
							M_1	& A\\ \hline
							B	& M_2 
						\end{matrix}\,\right)
						\cdot
						\left(\,\begin{matrix}[c|c]
							\ID_a	& \mathbb 0\\ \hline
							\mathbb 0		& N_2 
						\end{matrix}\,\right)
				   \right) \\
			&= \det
			   \left(\,\begin{matrix}[c|c]
					M_1	& A \cdot N_2\\\hline 
					B	& M_2 \cdot N_2
				\end{matrix}\,\right) \\
			&= \det
			   \left(\,\begin{matrix}[c|c]
					M_1	& -M_1 \cdot C\\ \hline 
					B	& (\det M)\cdot\ID_{r-a} - B \cdot C
				\end{matrix}\,\right) \\
			&= \det \left( 
						\left(\,\begin{matrix}[c|c]
							M_1	& \mathbb 0\\ \hline
							B	& \ID_{r-a}
						\end{matrix}\,\right)
						\cdot
						\left(\,\begin{matrix}[c|c]
							\ID_a	& -C\\ \hline
							\mathbb 0		& (\det M)\cdot \ID_{r-a}
						\end{matrix}\,\right)
				\right)\\
			&= \left( \det M_1 \right) \cdot \left( \det M \right)^{r-a}.
	\end{align*}
	
	\medskip
	\noindent
	\medskip
	We now derive Claim~\ref{claim_ii}) from Claim~\ref{claim_i}). First, we additionally assume $M_1\in\operatorname{GL}(a)$:
	\begin{align*}
		(\det M)
		C
		\cdot
		\mathsf{adj}\, N_2
			&= (\det M)
				\Big(
					M_1^{-1}
					\cdot
					M_1 
					\cdot 
					C
				\big)
				\cdot
				\mathsf{adj}\, N_2\\
			&= -\frac{\det M}{\det M_1} 
			   	\mathsf{adj}\, M_1
				\cdot
				(A\cdot N_2)
				\cdot
				\mathsf{adj}\,N_2\\
			&= -\frac{(\det M)(\det N_2)}{\det M_1}
				\mathsf{adj}\, M_1
				\cdot
				A\\
			&= -(\det M)^{r-a}
				\mathsf{adj}\, M_1
				\cdot
				A.
	\end{align*}
	We now prove $(\det M) C \cdot \mathsf{adj}\, N_2 = -(\det M)^{r-a} \mathsf{adj}\, M_1 \cdot A$ for 
	$M_1\not\in\operatorname{GL}(a)$, that is, if $\det M_1=0$. Clearly, 
	$p(\epsilon) := \det(M_1 + \epsilon\cdot\ID_a) \in \R[\epsilon]$ is a polynomial of degree~$0<a\leq r$.
 In particular, $p$ has at most $a$ distinct zeroes. Since $p(0)=0$, 
	there exists $\epsilon_0>0$ such that $M_1+x\cdot\ID_a\in \operatorname{GL}(a)$ for all 
	$0<x<\epsilon_0$. We now set
	\[
		M_x
		:= 
		\left(\,\begin{matrix}[c|c]
			M_1+x\cdot\ID_a	& A\\ \hline
			B						& M_2
		\end{matrix}\,\right)
		\qquad\text{and}\qquad
		\widetilde N 
		:= \mathsf{adj}(M_x)
		=
		\left(\,\begin{matrix}[c|c]
			\widetilde N_1	& \widetilde C\\ \hline
			\widetilde D	& \widetilde N_2
		\end{matrix}\,\right)
	\]
	and obtain
	\[
		(\det M_x) \widetilde C \cdot \mathsf{adj}(\widetilde N_2) 
			= 
			-(\det M_x)^{r-a} \mathsf{adj}(M_1+x\cdot\ID_a) \cdot A
		\qquad\text{for all $0<x<\varepsilon_0$}.
	\]
	Continuity now implies 
	\[
		(\det M_x) \widetilde C \cdot \mathsf{adj}(\widetilde N_2) 
			= 
			-(\det M_x)^{r-a} \mathsf{adj}(M_1) \cdot A
		\qquad\text{for $x=0$.}
	\] 
\end{proof}

\section{Vertex transitive polytopes}
\label{sec:vertextranspoly}

In this section, we return to Izmestiev's Colin de Verdi{\`e}re Matrix and investigate the case of vertex transitive $k$-polytopes $Q \subset \mathbb{R}^k$. Let $w_1,\dots,w_n$ denote the vertices of this polytope. Then the arithmetic mean of the vertices
$$ \widehat w(Q) = \frac{1}{n}(w_1 + \cdots + w_n) $$
lies in $Q$, by convexity. We call $\widehat w(Q)$ the \emph{centre} of the polytope $Q$. For a vertex transitive polytope, $\widehat w(Q)$ is a fixed-point of the full isometry group $\Iso(Q)$ of $Q$. By applying a translation, if necessary, we can assume that $\widehat w(Q) = \0$. This implies that $\Iso(Q)$ is a finite subgroup of $O(k)$ and $\|w_s\| = \| w_t\|$ for all $s,t\in[n]$. 
Henceforth we always implicitely assume $\widehat w(Q) = \0$ when we refer to vertex transitive polytopes $Q$.

To have enough flexibility, we will often consider a subgroup $\Gamma \subset \Iso(Q)$, which acts still transitively on the vertices of $Q$. We say that $Q$ is vertex transitive with respect to $\Gamma \subset O(k)$.
	
The $\Gamma$-action induces an action on 
the $r$-faces of $Q$ for all $r \in [k]$ and an equivalence relation on the set of $r$-faces, whose equivalence classes are the $\Gamma$-orbits. In particular, we have for any element in $\Gamma \subset O(k)$ that maps $w_s$ to $w_{s'}$ and its neighbor $w_t$ to $w_{t'}$ the following identity for the entries of the Colin de Verdi{\`e}re Matrix $M$ from Theorem~\ref{thm:ivan}:
\begin{equation} \label{eq:Ms't'st}
	 M_{s't'} 
	 	= - \frac{\vol_{k-2}(F_{s't'})} {\Vert w_{s'} \Vert \cdot \Vert w_{s'} \Vert \sin \theta_{s't'}} 
		= - \frac{\vol_{k-2}(F_{st})} {\Vert w_s \Vert \cdot \Vert w_t \Vert \sin \theta_{st}} 
		= M_{st}. 
\end{equation}
In other words, the entries of $M$ corresponding to $\Gamma$-equivalent edges of~$Q$ coincide. Consequently, we have for any pair $s,s' \in [n]$, $\sum_{t\neq s} M_{st}=\sum_{t'\neq s'} M_{s't'}$. Moreover, it can be seen from \eqref{eq:Mstorig} that vertex transitivity implies that all diagonal entries $M_{ss}$ coincide. In particular, the values in one row of $M$ determine the entries of all other rows in the vertex transitive case.

For vertex transitive $k$-polytopes we therefore have: the diagonal entries~$M_{ss}$ of the (symmetric) Colin de Verdi\`ere matrix~$M$ of Theorem~\ref{thm:ivan} are all equal to some constant~$\gamma \in \mathbb{R}$ and summing the entries of any column or row yields another constant~$\delta \in \mathbb{R}$. We have $\gamma > \delta$ since all off-diagonals of $M$ are non-positive, and some of them are strictly negative. In the next subsection, we introduce a closely related stochastic matrix for these vertex transitive polytopes. 


\subsection{Random walk matrices and the spectral gap}

Assume that $Q \subset \mathbb{R}^k$ is a vertex transitive $k$-polytope with Colin de Verdi{\`e}re Matrix $M$ given in Theorem \ref{thm:ivan}. We can associate to $M$ a family of symmetric matrices
\begin{equation} \label{eq:TtauM}
	T_\tau(M) := \frac{1}{\tau-\delta}\big( \tau\mathsf{Id} - M  \big)
\end{equation}
for every $\tau\geq\gamma$. Then $T_\tau(M)$ is a doubly stochastic symmetric matrix that has the same eigenspaces as~$M$. Any such matrix $T = T_\tau(M)$ describes a random walk on the vertex-edge graph $G$ of $Q$ with laziness $\frac{\tau-\gamma}{\tau-\delta}$ at each vertex of $G$. The choice $\tau = \gamma$ leads to a random walk without laziness. We will henceforth denote the stochastic matrix $T_\gamma(M)$ without laziness 
simply by $T = T(Q)$.

Since $T$ is symmetric, its spectrum is real. The highest eigenvalue of $T$ is always $\lambda_0(T)=1$ with constant eigenvector $\mathbb 1_n$. Of particular interest is the second largest eigenvalue $\lambda_1(T)$ of $T$, which has the variational characterisation
$$ \lambda_1(T) = \sup_{v \bot \mathbb 1_n, v \neq 0} \frac{v^\top Tv}{v^\top v}. $$
Regarding $\lambda_1(T)$, we have the following reformulation of Corollary \ref{cor:specrep} in this particular case.

\begin{corollary}
    Let $Q \subset \mathbb{R}^k$ be a vertex transitive $k$-polytope with vertices $w_1,\dots,w_n$ and $\phi_1,\dots,\phi_k$ be defined via \eqref{eq:wphi}. Then the vectors $\phi_j \in \mathbb{R}^n$ are a basis of the $T(Q)$-eigenspace to the second largest eigenvalue 
    $$ \lambda_1(T(Q)) = \frac{\gamma}{\gamma-\delta} < 1. $$ 
\end{corollary}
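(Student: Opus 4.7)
The plan is to chain three ingredients: Corollary~\ref{cor:specrep}, the explicit formula for $T(Q)$ in~\eqref{eq:TtauM}, and the spectral axiom (M2) of Colin de Verdi\`ere matrices.

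First, Corollary~\ref{cor:specrep} states that $\phi_1,\dots,\phi_k$ form a basis of $\ker M$. Combining this with $T(Q)=T_\gamma(M)=(\gamma-\delta)^{-1}(\gamma\,\mathsf{Id}-M)$ and $M\phi_j=0$ gives immediately
\[
T(Q)\phi_j=\frac{\gamma}{\gamma-\delta}\phi_j\qquad\text{for each $j\in[k]$.}
\]
Since $w_1,\dots,w_n$ span $\R^k$, the $\phi_j$ are linearly independent; together with the $M$-corank equal to $k$, they therefore span the full eigenspace of $T(Q)$ associated with the eigenvalue $\gamma/(\gamma-\delta)$.

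Next I would identify the constant eigenvector. The row-sum condition defining $\delta$ in the paragraph preceding \eqref{eq:TtauM} yields $M\mathbb{1}_n=\delta\,\mathbb{1}_n$, hence $T(Q)\mathbb{1}_n=\mathbb{1}_n$. Observe that $\sum_s(w_s)_j=n\,\widehat w(Q)_j=0$ because the centre was translated to the origin, so the $\phi_j$ are orthogonal to $\mathbb{1}_n$; in particular $\mathbb{1}_n\notin\ker M$, which forces $\delta\neq 0$.

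The only non-trivial step, and thus the main obstacle, is to establish $\delta<0$; from this both the identification $\lambda_1(T(Q))=\gamma/(\gamma-\delta)$ and the strict inequality $\lambda_1(T(Q))<1$ follow. By axiom (M2), the spectrum of $M$ consists of exactly one simple negative eigenvalue $\mu_-<0$, the eigenvalue $0$ of multiplicity $k$, and strictly positive eigenvalues. If $\delta>0$ held, then the eigenvalue $\mu_-$ of $M$ would produce a $T(Q)$-eigenvalue $(\gamma-\mu_-)/(\gamma-\delta)>1$, contradicting $\lambda_0(T(Q))=1$ for the doubly stochastic matrix $T(Q)$. Hence $\delta<0$, $\delta$ itself is the unique negative eigenvalue of $M$, and its associated $T(Q)$-eigenvalue equals $(\gamma-\delta)/(\gamma-\delta)=1$, matching the eigenvector $\mathbb{1}_n$. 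Every other eigenvalue $\mu$ of $M$ satisfies $\mu\geq 0$, so the corresponding $T(Q)$-eigenvalue $(\gamma-\mu)/(\gamma-\delta)\leq\gamma/(\gamma-\delta)$, with equality exactly on $\ker M$. This proves $\lambda_1(T(Q))=\gamma/(\gamma-\delta)$, and since $\gamma-\delta>0$ and $-\delta>0$ imply $\gamma<\gamma-\delta$, we conclude $\gamma/(\gamma-\delta)<1$.
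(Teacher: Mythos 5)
Your proof is correct and follows essentially the same route as the paper's: invoke Corollary~\ref{cor:specrep} to identify $\ker M=\operatorname{span}\{\phi_1,\dots,\phi_k\}$, transport this through the order-reversing affine map $\mu\mapsto(\gamma-\mu)/(\gamma-\delta)$ to the stochastic matrix $T(Q)$, and use the Colin de Verdi\`ere signature (one simple negative eigenvalue, corank $k$) to place $\gamma/(\gamma-\delta)$ in second position. The one thing you add beyond what the paper writes out is the explicit derivation of $\delta<0$ (via the contradiction $(\gamma-\mu_-)/(\gamma-\delta)>1$ against double stochasticity), which the paper only leaves implicit when it asserts $\lambda_1(T(Q))<1$; this is a worthwhile clarification but not a different method.
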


\begin{proof}
    We know from Corollary \ref{cor:specrep} that $\phi_1,\dots,\phi_k$ are a basis of $\ker M$ and therefore also of the eigenspace of the matrix $T= T(Q) = T_\gamma(M)$ to the eigenvalue
    $$ \lambda = \frac{\gamma}{\gamma-\delta}, $$
    by \eqref{eq:TtauM}. Note also that \eqref{eq:TtauM} and $\gamma > \delta$ implies that the (second) largest eigenvalue of $T$ corresponds to the (second) smallest eigenvalue of $M$. Since $M$ is a Colin de Verdi{\`e}re Matrix, its second smallest eigenvalue is $0$, which implies that $\lambda$ is the second largest eigenvalue of $T$.
\end{proof}    

Let us now briefly recall the notion of a Cayley graph of a finitely generated group $\Gamma$ (see \cite{C-1878} for its origin and, e.g., \cite{B-95} for a modern introduction). Let $S = \{s_1,\dots,s_m\}$ be a symmetric set of non-trivial generators, that is, none of the elements $s_j$ are the identity $e \in \Gamma$, and for any $s \in S$ we have also $s^{-1} \in S$. The associated Cayley graph ${\rm{Cay}}(\Gamma,S)$ is then defined as follows: its vertex set agrees with $\Gamma$, and there is an edge between $g, g' \in \Gamma$ if and only if there exists $s_j \in S$ such that $g' = g s_j$. Note that ${\rm{Cay}}(\Gamma,S)$ is a simple graph, that is, it has no loops and no multiple edges. We have the following standard fact.

\begin{proposition} \label{prop:polycayleygraph}
    Let $Q \subset \mathbb{R}^k$ be a $k$-polytope such that $\Gamma \subset O(k)$ acts simply transitively on the vertices $w_1,\dots,w_n$ of $Q$. Assume that $w_1,\dots,w_m$ are the neighbours of $w_{m+1}$ and let $w_j = s_j w_{m+1}$ for all $j \in [n]$. 

    Then $s_1,\dots,s_m$ is a symmetric set of generators of $\Gamma$ and the vertex-edge graph of $Q$ is canonically isomorphic to ${\rm{Cay}}(\Gamma,S)$ with $S = \{s_1,\dots,s_m \}$ via the identification $w_j \mapsto s_j$.
\end{proposition}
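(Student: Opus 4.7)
The plan is to verify the statement in four small steps: (i) each $s_j$ with $j\in[m]$ is non-trivial; (ii) $S=\{s_1,\dots,s_m\}$ is closed under taking inverses; (iii) $S$ generates $\Gamma$; and (iv) the bijection $w_j \mapsto s_j$ intertwines the two edge relations. The only non-combinatorial input I will rely on is the fact that the $1$-skeleton of any polytope is connected; every other step is a bookkeeping exercise combining simple transitivity of the $\Gamma$-action with the observation that $\Gamma\subset O(k)$ permutes the edges of $Q$.

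For (i), the neighbours $w_1,\dots,w_m$ of $w_{m+1}$ are distinct from $w_{m+1}$ itself, and since the action is simply transitive the inequality $s_j w_{m+1} \neq w_{m+1}$ forces $s_j\neq e$. For (ii), given $s_j\in S$ with $j\in[m]$, the element $s_j^{-1}\in\Gamma$ sends the edge $\{w_{m+1},w_j\}$ of $Q$ to the edge $\{s_j^{-1}w_{m+1},\,w_{m+1}\}$; hence $s_j^{-1}w_{m+1}$ is a neighbour of $w_{m+1}$, so it equals some $w_k$ with $k\in[m]$, which by simple transitivity yields $s_j^{-1}=s_k\in S$.

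For (iii), pick any $w_j$ and an edge-path $w_{m+1}=w_{j_0},w_{j_1},\dots,w_{j_r}=w_j$ in the $1$-skeleton of $Q$ (which exists by connectivity). I induct on $i$ to show that $s_{j_i}$ is a product of elements of $S$. Indeed, if $s_{j_i}=s_{k_1}\cdots s_{k_i}$ with each $s_{k_\ell}\in S$, then applying $s_{j_i}^{-1}$ to the edge $\{w_{j_i},w_{j_{i+1}}\}$ produces an edge containing $w_{m+1}$, so $s_{j_i}^{-1}w_{j_{i+1}}=w_{k_{i+1}}$ for some $k_{i+1}\in[m]$, which by simple transitivity gives $s_{j_{i+1}}=s_{j_i}s_{k_{i+1}}=s_{k_1}\cdots s_{k_{i+1}}$.

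For (iv), the map $w_j\mapsto s_j$ is a bijection between the vertex sets by simple transitivity. If $\{w_i,w_j\}$ is an edge of $Q$, applying $s_i^{-1}$ sends it to an edge containing $w_{m+1}$, so $s_i^{-1}w_j=w_k$ with $k\in[m]$, hence $s_i^{-1}s_j=s_k\in S$ and $\{s_i,s_j\}$ is an edge of ${\rm{Cay}}(\Gamma,S)$. Conversely, if $s_i^{-1}s_j=s_k$ with $k\in[m]$, then $w_j=s_iw_k$; since $w_k$ is a neighbour of $w_{m+1}$ and $s_i$ preserves edges, $w_j$ is a neighbour of $w_i$. I do not anticipate a genuine obstacle here: once one adopts the viewpoint that $\Gamma$ acts on the edge set of $Q$, all the required verifications reduce to transporting the distinguished star of edges incident to $w_{m+1}$ by group elements, with the mild subtlety in step~(iii) being the need to invoke connectedness of the polytope's $1$-skeleton to guarantee that such a star actually generates everything.
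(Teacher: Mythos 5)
Your proof is correct and follows essentially the same route as the paper: you use simple transitivity to establish the vertex bijection, the fact that $\Gamma$ permutes edges of $Q$ to verify closure of $S$ under inversion and to match the two edge relations, and connectivity of the $1$-skeleton to show $S$ generates. Your version is somewhat more explicit (the induction along an edge-path in step (iii) and the two-directional check in step (iv)) than the paper's more compressed account, but it is the same argument.
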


\begin{proof}
    The symmetry of $S$ can be seen as follows: Let $s \in S$, that is, there exists $i \in [m]$ with $s=s_i$. Then $\{w,w_i\}$ is an edge of $Q$ and, since the isometry $s^{-1} \in \Gamma$ permutes the edges of $Q$, 
    $\{s^{-1} w, s^{-1} w_i = w \}$ 
    is again an edge of $Q$, and there must exist $j \in [m]$ such that $\{w,w_j\} = \{w,s^{-1} w\}$. The $1:1$-correspondence between the vertices of $Q$ and the elements of $\Gamma$ implies that $s^{-1} = s_j$. This shows that $s^{-1} \in S$.  
    
    The set $S$ generates $\Gamma$ since there is a path from $w_{m+1}$ to any other vertex $w_t = s_t w_{m+1}$, $t \in [n]$, along edges of $Q$, and this path translates into a representation of $s_t$ in terms of the elements $s_j$, $j \in [m]$. Note that the edges $\{w_{m+1},w_j\}$, $j \in [m]$, of $Q$ are in $1:1$-correspondence to the edges $\{e,s_j\}$, $j \in [m]$, in $G$ incident to $e$, and that this correspondence carries over to every other vertex via the transitive action of $\Gamma$ both on the vertices of $Q$ and the vertices of $G$.
\end{proof}

Let us now return to the 
stochastic matrix $T(Q)$. Note that our definition of $T(Q)$ requires the existence of a finite group $\Gamma \subset O(k)$, acting transitively on the vertices of $Q$. Let us assume again that $\Gamma$ acts simply transitively on the vertices of $Q$. We sort the neighbours $w_1,\dots,w_m \in Q$ of the vertex $w_{m+1} \in Q$ and the corresponding generators $s_1,\dots,s_m$ in Proposition \ref{prop:polycayleygraph} in such a way that the first $\nu$ group elements $s_1\dots,s_\nu \in \Gamma$ have order $2$, the next $2\mu$ group elements $s_{\nu+1},\dots,s_{\nu+2\mu}$ have orders $\ge 3$, and that $s_{\nu+2j} = s_{\nu+2j-1}^{-1}$ for $j \in [\mu]$. The relation \eqref{eq:MIvan} of Izmestiev's Colin de Verdi{\`e}re Matrix $M$ means that we have
\begin{equation} \label{eq:stochrelvertices} 
\gamma w_{m+1} = (\gamma-\delta) \left( \left( \sum_{j=1}^\nu x_j w_j \right) + \left( \sum_{j=1}^{\mu} x_{\nu+j} (w_{\nu+2j-1} + w_{\nu+2j}) \right) \right), 
\end{equation}
that is, the vertex $w_{m+1}$ of $Q$ is parallel to a weighted sum of its neighbouring vertices $w_1,\dots,w_m$
with suitable weights $x_1,\dots,x_{\nu+\mu} > 0$ satisfying
$$
\left( \sum_{j=1}^\nu x_j \right) + 2 \left( \sum_{j=\nu+1}^{\nu+\mu} x_j \right) = 1. 
$$
By the relation \eqref{eq:TtauM}, the
$(m+1)$-st row of the associated stochastic matrix $T(Q)$ is then given by
\begin{equation} \label{eq:(m+1)-row-TQ} 
(x_1,\dots,x_\nu,x_{\nu+1},x_{\nu+1},\dots,x_{\nu+\mu},x_{\nu+\mu},0,\dots,0), 
\end{equation}
and every other row of $T(Q)$ 
is obtained by permuting these entries according to the group action (cf. \eqref{eq:Ms't'st}). Therefore it makes sense to introduce the simplex of probability weights
\begin{equation} \label{eq:simpDelGamm} 
\Delta_\Gamma = \set{x_1,\dots,x_{\nu+\mu} \ge 0}{\left( \sum_{j=1}^\nu x_j \right) + 2\left( \sum_{j=\nu+1}^{\nu+\mu} x_j\right) = 1},
\end{equation}
collecting the weights in the stochastic matrix $T(Q)$.

Let us mention that under the transition from the Colin de Verdi{\`e}re Matrix $M$ to the matrix $T(Q)$, we lose the information about the diagonal entries of $M$. This means that we cannot reconstruct $M$ from the stochastic matrix $T(Q)$.



A natural quantity arising in relation to the spectrum is the \emph{spectral gap}, that is, the value $1 - \lambda_1(T(Q))$, where $\lambda_1$ denotes the second largest eigenvalue. 
The spectral gap is known to be a measure for exponential mixing of the random walk on the associated vertex-edge graph. An estimate for the spectral gap of the original matrix $M$ and interest in its maximality was also mentioned in \cite[Subsection 3.2]{Izm-10}. 

Every vertex transitive $k$-polytope $Q \subset \mathbb{R}^k$ with respect to $\Gamma \subset O(k)$ gives rise to the following variation: Let  $S^{k-1}(r) \subset \mathbb{R}^k$ be the sphere of radius $r > 0$ centred at the origin and containing all the vertices of $Q$. Then we have $Q = \conv(\Gamma w_0)$, where $w_0 \in Q$ is any vertex of $Q$, and a \emph{variation} of $Q$ is given by  
\begin{equation} \label{eq:variation}
S^{k-1}(r) \ni w \mapsto Q_w = \conv(\Gamma w). 
\end{equation}
Instead of considering a variation of a given polytope, one can also start with a finite group $\Gamma \subset O(k)$ with the property that $Q_w = \conv(\Gamma w)$ is a $k$-polytope for some $w \in \mathbb{R}^k$, and then investigate the function $S^{k-1}(r) \ni w \mapsto \lambda_1(T(Q_w)) \in [-1,1]$ for some $r > 0$.
Note that it is sufficient to restrict the domain of this function to a sphere $S^{k-1}(r)$, since $Q_w$ and $Q_{\mu w}$ with $\mu > 0$ are homothetic and $\lambda_1$ rescales accordingly by a fixed positive factor.
The function $\lambda_1$ is not everywhere continuous on $S^{k-1}(r)$, due to the fact that the combinatorial types of the polytopes $Q_w$ may change under the variation. This happens, for example, if $\Gamma$ acts simply transitively on the orbit $\Gamma w$ but not on the orbit $\Gamma w'$. In this case, the number of vertices of $Q_{w'}$ is smaller than the number of vertices of $Q_w$. The subset of $w \in \mathbb{R}^k$ for which $\Gamma$ does not act simply transitively on  $\Gamma w \subset \mathbb{R}^k$ is a finite union of lower dimensional linear subspaces (the eigenspaces of the non-trivial matrices in $\Gamma$ to the eigenvalue $1$). This means that $\Gamma$ has simply transitive $\Gamma$-orbits in an open subset $S^{k-1}_{\rm{reg}}(r) \subset S^{k-1}(r)$ of full measure. We refer to $S^2_{\rm{reg}}(r)$ as the \emph{regular subset} of the $\Gamma$-action on $S^2(r)$. While the number of vertices of $Q_w$ does not change within $S^{k-1}_{\rm{reg}}(r)$, the combinatorial type may still change within 
this subset: for example, in the case $k=3$, families of certain polygonal $2$-faces of $Q_w$, which lie in different affine planes for some $w \in S_{\rm{reg}}^{k-1}(r)$, may become parallel within $S_{\rm{reg}}^{k-1}(r)$ and therefore part of a single plane under the variation, and they
may therefore merge into a single new face, with the effect that certain neighbours of a vertex $w$ may no longer be its neighbours after this process. In this case, the corresponding entry of Izmestiev's Colin de Verdi{\`e}re matrix shrinks to zero but this does not affect the continuity of $\lambda_1$
at these points. For background information about combinatorics and geometry of polytopes, we refer readers to \cite{HRGZ-97}.


In the next subsections, we illustrate the variation \eqref{eq:variation} by some concrete examples. Before doing so, we introduce the natural maps
\begin{equation} \label{eq:PsiDelta} 
\Psi_\Delta: S_{\rm{reg}}^{k-1}(r) \to \Delta_\Gamma 
\end{equation}
and
\begin{equation} \label{eq:Psilambda}
\lambda_1: S^{k-1}(r) \to [-1,1] 
\end{equation}
associated to this variation.
For a given $w \in S_{\rm{reg}}^{k-1}(r)$, we define $\Psi_\Delta(w)$ to be the weights of the stochastic matrix $T(Q_w) \in \mathbb{R}^{n \times n}$ in $\Delta_\Gamma$. Here $n$ is the number of vertices of the associated polytope $Q_w$, which remains constant for all $w \in S_{\rm{reg}}^{k-1}(r)$. For a general $w \in S^{k-1}(r)$, we set $\lambda_1(w)$ equal to $\lambda_1(T(Q_w))$, where  
the size of $T(Q_w)$ is again equal to the number of vertices of $Q_w$ and may be smaller than $n$.

\subsection{Example: Coxeter groups and permutahedra}
\label{subsec:coxperm}


In this subsection, we use Theorem \ref{thm:main1} to express Izmestiev's geometrically motivated Colin de Verdi{\'e}re Matrix $M$ of Coxeter permutahedra $Q$ in algebraic terms of the underlying Coxeter system (the roots and the Schl\"afli matrix). Then we employ the agreement of the associated stochastic matrix $T(Q)$ with the stochastic matrix $P_X$, $X \in \Delta_\Gamma$ introduced in \cite{IP-13}. In particular, the constant corank property of Izmestiev's matrix implies that the multiplicity of $\lambda_1(P_X)$ is constant on $S_{\rm{reg}}^{k-1}(r)$. This settles an open question posed in Remark  1.11(a) of \cite{IP-13} and allows us to derive a connection between equilaterality of the permutahedron $Q$ and the minimality of $\lambda_1(T(Q))$. Such a connection was established in \cite{IP-13} only for the particular Coxeter groups $A_3, B_3, H_3$. 

Let $\Gamma$ be an irreducible finite Coxeter group $\Gamma$ of rank $k$, given in the generator-relation presentation by
$$ \Gamma = \langle S = \{s_1,\dots,s_k\}\, \mid\, (s_is_j)^{m_{ij}} = e \rangle. $$ 
As usual, we require $m_{ij} = m_{ji} \ge 2$ for $i \neq j$ and $m_{ii} = 1$. For brackground of the following exposition, we refer readers to Section 1.4 of \cite{IP-13}. Let $\Gamma \hookrightarrow O(k)$ be the geometric realisation of $\Gamma$ as a finite reflection group, $\sigma_j \in O(k)$ be the reflection corresponding to the generator $s_j$, and $n_1,\dots,n_k$ be the associated unit normals of the corresponding mirrors such that any pair of unit normals $n_i,n_j$, $i \neq j$ has an angle $\ge \pi$. Let $N = (\langle n_i,n_j \rangle)_{ij \in [k]}$ be the associated Schl\"afli Matrix and assume that 
\begin{equation} \label{eq:V} 
V = \det \begin{psmallmatrix} \ssvek{n_1} & \cdots & \ssvek{n_k} \end{psmallmatrix} > 0 
\end{equation}
(this can always be arranged by a possible renumbering of the reflections $\sigma_j$).
Then
$$ \sigma_i(p) = p - 2\langle p,n_i \rangle n_i. $$
Let
\begin{equation} \label{eq:pj}
p_j = (-1)^{j-1} n_1 \times \cdots \times \widehat n_j \times \cdots \times n_k.
\end{equation}
It follows from $\langle p_i,p_j \rangle = V \delta_{ij}$ that 
\begin{equation} \label{eq:pipj}
\langle p_i,p_j \rangle = V^2 (N^{-1})_{ij}.
\end{equation}
Moreover,
$$ \mathcal{F} = \set{\sum_{j=1}^k \alpha_j p_j}{\alpha_1,\dots,\alpha_k > 0} \subset \mathbb{R}^k $$
is an (open) fundamental domain of the $\Gamma$-action on $\mathbb{R}^k$. For any $w \in \mathcal{F}$, the corresponding polytope $Q_w = \conv(\Gamma w)$ is simple and called a \emph{permutahedron} of the Coxeter group $\Gamma$, which acts simply transitively on the vertices of $Q_w$. The vertex-edge graph $G$ of $Q(v_0)$ agrees with the Cayley Graph ${\rm{Cay}}(\Gamma,S)$, by Proposition \ref{prop:polycayleygraph}. For more details, we refer the readers to \cite[Section 1.4]{IP-13}. Considering permutahedra corresponding to a spherical $\Gamma$-fundamental domain, we have the following result.

\begin{corollary} \label{cor:CoxMdescr}
    Let $\Gamma$ be an irreducible finite Coxeter group of rank $k$ with geometric realisation $\Gamma \hookrightarrow O(k)$ as a Euclidean reflection group, $\sigma_1,\dots,\sigma_k$ and $N$ be the reflections and Schl\"afli matrix associated to the simple roots $n_1,\dots,n_k \in \R^k$ introduced before. Let
    $$
    \mathcal{F}_0 = \set{p = \sum_{j=1}^k \alpha_j p_j}{p \in S^{k-1}(1), \alpha_1,\dots,\alpha_k > 0 } 
    $$
    be a spherical fundamental domain of the $\Gamma$-action with $p_1,\dots,p_k \in \R^k$ defined in \eqref{eq:pj}. 
    
    Let $w = \sum_j \alpha_j p_j \in \mathcal{F}_0$ be fixed, $\alpha = (\alpha_1,\dots,\alpha_k)^\top$, and 
    $Q_w = \conv(\Gamma w)$ the associated permutahedron with vertices $w_1,\dots,w_n \in S^{k-1}(1)$. We assume the enumeraton is chosen such that $w_{k+1} = w$ and $w_j = \sigma_j(w)$ for $j \in [k]$. Then the non-zero off-diagonals of the $(k+1)$-st row of the Colin de Verdi{\'e}re Matrix $M$ are given by
    \begin{equation} \label{M:permut}
       M_{k+1,i} = - \frac{1}{2 V^{k+1} (k-2)!\cdot
       \alpha_i \cdot \prod_{j \in [k]\setminus \{i\}} |(N^{-1} \alpha)_j |},
    \end{equation}
    with $V > 0$ given in \eqref{eq:V}.
\end{corollary}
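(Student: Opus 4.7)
The plan is to apply Corollary~\ref{cor:altIavnCvD} to the vertex $w_{k+1} = w$ of $Q_w$: since $w$ lies in the (open) fundamental chamber, $Q_w$ is simple (as noted in the discussion above), and by vertex transitivity the origin lies in its interior. Taking $v_0 = w$ and $v_j = w_j = \sigma_j(w)$ for $j \in [k]$, the edge vectors become $\varepsilon_j = \sigma_j(w) - w$, and the target is then obtained by unwinding all quantities appearing in \eqref{eq:Mstalternative} in terms of the simple roots $n_j$ and the coordinates $\alpha_j$.

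First, the reflection formula $\sigma_j(w) = w - 2\langle w, n_j\rangle n_j$ immediately gives $\varepsilon_j = -2\langle w, n_j\rangle n_j$. The duality relation $\langle p_l, n_j\rangle = V\delta_{lj}$, which is a direct consequence of the definition \eqref{eq:pj}, combined with $w = \sum_l \alpha_l p_l$, yields $\langle w, n_j\rangle = V\alpha_j$, and hence
\[
   \varepsilon_j = -2V\alpha_j\, n_j.
\]
Multilinearity of the determinant then produces
\[
   \vol_k(\E) \;=\; \bigl|\det(\varepsilon_1,\dots,\varepsilon_k)\bigr| \;=\; 2^k V^{k+1}\prod_{l\in[k]}\alpha_l,
\]
and multilinearity of the generalised cross product, together with $p_j = (-1)^{j-1} n_1 \times \cdots \times \widehat n_j \times \cdots \times n_k$, yields
\[
   q_j \;=\; (-2V)^{k-1}\Bigl(\prod_{l\ne j}\alpha_l\Bigr)\, p_j.
\]
Using \eqref{eq:pipj} in the form $\langle p_l, p_j\rangle = V^2 (N^{-1})_{lj}$, this gives
\[
   \langle w, q_j\rangle \;=\; (-2V)^{k-1}\,V^2\Bigl(\prod_{l\ne j}\alpha_l\Bigr)(N^{-1}\alpha)_j.
\]

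Finally, I substitute these expressions into \eqref{eq:Mstalternative} with $r = i$. The elementary identity $\prod_{j\ne i}\prod_{l\ne j}\alpha_l = \alpha_i \bigl(\prod_l \alpha_l\bigr)^{k-2}$ shows that the factor $(\prod_l \alpha_l)^{k-2}$ from $\vol_k(\E)^{k-2}$ cancels exactly against the analogous factor arising from $\prod_{j\ne i}|\langle w, q_j\rangle|$, leaving a single $\alpha_i$ in the denominator. Collecting the remaining powers of $2$ and $V$ reduces to the arithmetic identity $k(k-2) - (k-1)^2 = -1$, which produces the overall prefactor $\tfrac{1}{2V^{k+1}}$ and yields \eqref{M:permut}.

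The calculation is entirely mechanical; the one place where care is required is the bookkeeping of the powers of $2$, $V$, and the $\alpha_l$'s. In particular, the surprising cancellation of $(\prod_l\alpha_l)^{k-2}$ and the exponent identity $k(k-2)-(k-1)^2=-1$ are what ultimately produces the clean denominator $2V^{k+1}(k-2)!\,\alpha_i\prod_{j\ne i}|(N^{-1}\alpha)_j|$.
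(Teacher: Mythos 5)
Your proposal is correct and follows essentially the same route as the paper's own proof: both apply Corollary~\ref{cor:altIavnCvD} at the vertex $w_{k+1}$, compute $\varepsilon_j = -2V\alpha_j n_j$, express $q_j$ as the stated multiple of $p_j$, evaluate $\langle w, q_j\rangle$ via $\langle p_i,p_j\rangle = V^2(N^{-1})_{ij}$, and then substitute and collect exponents. The bookkeeping you carry out (the cancellation $\prod_{j\ne i}\prod_{l\ne j}\alpha_l = \alpha_i(\prod_l\alpha_l)^{k-2}$ and the exponent identities for $2$ and $V$) matches the paper's final multline computation exactly.
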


Note that \eqref{M:permut} determines also the diagonal entry of the $(k+1)$-st row of $M$ by the relation \eqref{eq:MIvan} and the entries of all other rows of $M$ by vertex transitivity via the relation \eqref{eq:Ms't'st}.

\begin{proof}
    Let $w = \sum_j \alpha_j p_j \in \mathcal{F}_0$ and $Q_w = \conv(\Gamma w)$ with vertices $w_1,\dots,w_n$ and $n = |\Gamma|$. Moreover, if we choose an enumeration of these vertices as given in the corollary, then the vectors $p_j$ are outer facet normals of the facets $F_j = \conv\{ w_1, \dots, \widehat w_j, \dots, w_k, w_{k+1} \}$ of $Q_w$ adjacent to the vertex $w \in Q_w$. In fact, the facet $F_j$ is parallel to the hyperplane spanned by the vectors
    \begin{equation} \label{eq:epsini} 
    \varepsilon_i = w_i - w_{k+1} = \sigma_i(w) - w = - 2 \langle w, n_i \rangle n_i = - 2 \alpha_i V n_i 
    \end{equation}
    for $i \in [k] \setminus \{j\}$, which are all perpendicular to the vector
    $$ q_j = (-1)^{j-1} \varepsilon_1 \times \cdots \times \widehat \varepsilon_j \times \cdots \times \varepsilon_k, $$
    defined as in \eqref{eq:qj}. Note that $p_j$ and $q_j$ are parallel with
    \begin{equation} \label{eq:qjpj}
    q_j = \frac{(-2V)^{k-1}}{\alpha_j} \left( \prod_{\ell \in [k]} \alpha_\ell \right) p_j. 
    \end{equation}
    It follows now from Corollary \ref{cor:altIavnCvD} of our volume ratio formula that
    $$ M_{k+1,i} = - \frac{1}{(k-2)!} \cdot \frac{|\det(\varepsilon_1,\dots,\varepsilon_k)|^{k-2}}
    {\prod_{j \in [k] \setminus \{i\}} |\langle w_{k+1},q_j\rangle|}. $$
    We have with $\alpha = (\alpha_1,\dots,\alpha_k)^\top$,
    \begin{multline*}
        \langle w_{k+1},q_j \rangle = \sum_{i \in [k]} \alpha_i \langle p_i,q_j \rangle \stackrel{\eqref{eq:qjpj}}{=} 
        \frac{(-2V)^{k-1}}{\alpha_j} \left( \prod_{\ell \in [k]} \alpha_\ell \right)
        \sum_{i \in [k]} \alpha_i \langle p_i,p_j \rangle \\ \stackrel{\eqref{eq:pipj}}{=} \frac{(-2)^{k-1}V^{k+1}}{\alpha_j} \left( \prod_{\ell \in {[k]}} \alpha_\ell \right) (N^{-1} \alpha)_j 
    \end{multline*}
    and
    $$ 
    \det(\epsilon_1,\dots,\epsilon_k) \stackrel{\eqref{eq:epsini}}{=} (-2)^k V^k \left( \prod_{\ell \in [k]} \alpha_\ell \right) \det(n_1,\dots,n_k) \stackrel{\eqref{eq:V}}{=} (-2)^k V^{k+1} \left( \prod_{\ell \in [k]} \alpha_\ell \right). 
    $$
    This implies 
    \begin{multline*}
       M_{k+1,i} = - \frac{1}{(k-2)!} \frac{2^{k(k-2)} V^{(k+1)(k-2)} \left( \prod_{\ell \in [k]} \alpha_\ell \right)^{k-2} \left( \prod_{\ell \in [k] \setminus \{i\}} \alpha_\ell \right)}
       {2^{(k-1)^2}V^{k^2-1}\left( \prod_{\ell \in [k]} \alpha_\ell \right)^{k-1} \prod_{j \in [k] \setminus \{i\}} | (N^{-1}\alpha)_j |} \\ = - \frac{1}{2 V^{k+1} (k-2)!\cdot
       \alpha_i \cdot \prod_{j \in [k]\setminus \{i\}} |(N^{-1} \alpha)_j |}.
    \end{multline*}
\end{proof}

Next we investigate the spectral gaps of the stochastic matrices $T(Q_w)$   
of permutahedra $Q_w$ for varying $w \in \mathcal{F}_0$. (Note that the associated random walks are without laziness due to the choice $\tau=\mu$.) 
Since all generators $s_j$ have order $2$, the simplex $\Delta_\Gamma$ of the probability weights in \eqref{eq:simpDelGamm} takes the form
$$ \Delta_\Gamma = \set{x_1,\dots,x_k > 0}{\sum_{j=1}^k x_j = 1}. $$
Then every $X \in \Delta_\Gamma$ is in $1:1$-correspondence to a doubly stochastic matrix $P_X$ with $(k+1)$-st row equal to
$$ (x_1,\cdots,x_k,0,\dots,0) $$
(in accordance with the more general situation \eqref{eq:(m+1)-row-TQ} given earlier). It follows from \eqref{eq:stochrelvertices} in this paper, the fact that the elements $s_1,\dots,s_k$ form a minimal symmetric set of generators of the Coxeter group $\Gamma$, and from formula (6), Corollary 1.7 and Proposition 1.9 in \cite{IP-13} that the restrictions of our maps $\Psi_\Delta$
and $\lambda_1$ in \eqref{eq:PsiDelta} and \eqref{eq:Psilambda} to $\mathcal{F}_0$ agree with the corresponding maps $\Psi_\Delta$ and $\Psi_\lambda$ given in \cite{IP-13}, respectively. Moreover, both maps are smooth on $\mathcal{F}_0$ due to Corollary 1.7 in \cite{IP-13}. Finally, we have the agreement $T(Q_w) = P_X$ for all $w \in \mathcal{F}_0$ and $X = \Psi_\Delta(w)$ with $P_X$ defined in \cite[Section 1.3]{IP-13}.

\begin{theorem}
  Let $\Gamma$ be an irreducible finite Coxeter group of rank $k$ with geometric realisation $\Gamma \hookrightarrow O(k)$ as a Euclidean reflection group, $\sigma_1,\dots,\sigma_k$ and $N$ be the reflections and Schl\"afli matrix associated to the simple roots $n_1,\dots,n_k \in \R^k$ introduced before. Let
  $$ \mathcal{F}_0 = \set{p = \sum_{j=1}^k \alpha_j p_j}{p \in S^{k-1}(1), \alpha_1,\dots,\alpha_k > 0 } $$
  be an open spherical fundamental domain of the $\Gamma$-action with $p_1,\dots,p_k \in \R^k$ defined in \eqref{eq:pj}. Then the map
  $$ \lambda_1: \mathcal{F}_0 \to [-1,1] $$
  has the property $\lambda_1(w) \to 1$ whenever $w \to \partial \mathcal{F}_0$, and $\lambda_1$ assumes a global minimum at a unique point $w_0 \in \mathcal{F}_0$ given by
  $$ w_0 = \frac{\sum_{j=1}^k p_j}{\left\Vert \sum_{j=1}^k p_j \right\Vert}, $$
  that is, the corresponding coefficient vector $\alpha_0 = (\alpha_1,\dots,\alpha_k)$ is a constant vector. Moreover, the corresponding permutahedron $Q_{w_0}$ is equilateral (all its edges have the same length) and there is no other permutahedron $Q_w$ with $w \in \mathcal{F}_0$ which is equilateral.
\end{theorem}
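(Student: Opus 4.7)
The plan is to combine the new constant–corank property of Izmestiev's matrix with the existing framework of \cite{IP-13}. Via the identification $T(Q_w) = P_X$ with $X = \Psi_\Delta(w)$, the problem reduces to studying $\lambda_1(P_X)$. The corresponding statement in \cite{IP-13} was restricted to $A_3, B_3, H_3$ precisely because the general argument required constant multiplicity of $\lambda_1(P_X)$ on $\mathcal{F}_0$, which was left as an open question. Now, by Corollary~\ref{cor:specrep} applied in the vertex transitive setting, $M(Q_w)$ has corank exactly $k$ throughout $\mathcal{F}_0$; via \eqref{eq:TtauM} this means the eigenspace of $T(Q_w)$ at $\lambda_1$ has constant dimension $k$. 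Standard analytic perturbation theory then yields real-analyticity of $\lambda_1$ on $\mathcal{F}_0$, so that the arguments of \cite{IP-13} apply uniformly to every irreducible finite Coxeter group.

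For the boundary behavior, as $w \to \partial \mathcal{F}_0$ some coefficient $\alpha_i \to 0$. By \eqref{eq:epsini} the edge vector $\varepsilon_i = -2 \alpha_i V n_i$ vanishes, so the neighbour $\sigma_i(w_{k+1})$ coalesces with $w_{k+1}$, and this coalescence propagates across the whole $\Gamma$-orbit of this edge. In the limit the polytope has strictly fewer vertices, and $T(Q_w)$ develops eigenvectors localised on the coalescing pairs whose eigenvalues approach~$1$; hence $\lambda_1(w) \to 1$. For the equilaterality assertion, the edge from $w_{k+1}$ to $\sigma_i(w_{k+1})$ has length $\|\varepsilon_i\| = 2 \alpha_i V$ by \eqref{eq:epsini}, and by Proposition~\ref{prop:polycayleygraph} the edge set of $Q_w$ partitions into exactly $k$ $\Gamma$-orbits — one per generator $\sigma_i$ — each of constant length. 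Thus $Q_w$ is equilateral if and only if $\alpha_1 = \cdots = \alpha_k$, which on $S^{k-1}(1)$ singles out the unique point $w_0 = \frac{\sum_j p_j}{\|\sum_j p_j\|}$.

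Combining analyticity with the boundary behavior, $\lambda_1$ attains a global minimum in the interior of $\mathcal{F}_0$. The critical-point equations can then be derived by Hellmann--Feynman-style differentiation of $T(Q_w) \phi_j = \lambda_1(w)\, \phi_j(w)$, using the basis $\phi_1,\ldots,\phi_k$ of the $\lambda_1$-eigenspace supplied by Corollary~\ref{cor:specrep} and the explicit entries of $M$ from Corollary~\ref{cor:CoxMdescr}. That the symmetric point $\alpha_1 = \cdots = \alpha_k$ is critical follows readily from the invariance of the system under any Dynkin diagram symmetry, and it is easily verified directly from the formula \eqref{M:permut}. The hard part is ruling out any further critical points: for Coxeter types whose Dynkin diagram admits no nontrivial symmetry (e.g.\ $B_n$, $F_4$, $E_7$, $E_8$, $H_3$, $H_4$), no group action forces the minimum to be $\alpha$-symmetric, so uniqueness must come from a direct convexity or monotonicity analysis of $\lambda_1$ as a function of $\alpha$ along rays in $\mathcal{F}_0$. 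It is precisely this analysis — already carried out in \cite{IP-13} conditional on constant multiplicity — that now becomes unconditional thanks to Corollary~\ref{cor:specrep}, yielding the claimed unique global minimum at $w_0$.
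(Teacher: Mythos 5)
Your high-level strategy is essentially the one the paper follows: use the constant corank $k$ of Izmestiev's matrix (hence constant multiplicity of $\lambda_1(T(Q_w))$ over $\mathcal{F}_0$) to unlock the machinery of \cite{IP-13}, combine with the boundary behaviour $\lambda_1 \to 1$, and characterise equilaterality as $\alpha_1 = \cdots = \alpha_k$. The latter characterisation is the same calculation as in the paper ($\|w - \sigma_i w\| = 2\alpha_i V$). So the route is right, and both treatments rely on the same two inputs from \cite{IP-13}.

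However, your account of \emph{how} uniqueness of the minimiser is obtained has a genuine gap, because you have guessed incorrectly what \cite{IP-13} supplies. The key fact used by the paper is Theorem~1.3 of \cite{IP-13}: under a constant-multiplicity hypothesis on $\lambda_1$, \emph{every} critical point of $\lambda_1$ on $\mathcal{F}_0$ corresponds to an equilateral permutahedron. From this, together with the fact that equilaterality forces $\alpha$ constant (a single point of $\mathcal{F}_0$), uniqueness of the critical point — hence of the global minimum — is immediate; no convexity or monotonicity analysis and no Dynkin-diagram symmetry argument is needed. Your proposal runs the logic in the less effective direction: you first try to show that the symmetric $\alpha$ is a critical point (which, as you note, cannot be done via Dynkin symmetry for $B_n, F_4, E_7, E_8, H_3, H_4$), and then appeal to an imagined ``convexity or monotonicity analysis'' in \cite{IP-13} to exclude other critical points. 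That analysis is not what \cite{IP-13} provides, and without it your argument does not close. Relatedly, your direct argument for the boundary behaviour (coalescing neighbours producing localised eigenvectors with eigenvalue $\to 1$) is heuristic and not substantiated; the paper simply invokes Proposition~1.2 of \cite{IP-13}, which already covers the case of a minimal symmetric generating set of a Coxeter group. Replacing your critical-point discussion by the ``critical $\Rightarrow$ equilateral $\Rightarrow$ $\alpha$ constant'' chain and citing the two stated results of \cite{IP-13} turns your sketch into the paper's proof.
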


\begin{proof}
  Since $s_1,\dots,s_k$ is a minimal symmetric set of generators of the Coxeter group $\Gamma$, Proposition 1.2 in \cite{IP-13} implies that
  $$ \lambda_1(w) \to 1 \quad \text{as $w \to \partial \mathcal{F}_0$}, $$
  and that a global minimum of $\lambda_1$ is assumed inside the open set $\mathcal{F}_0 \subset S^{k-1}(1)$. 

  Izmestiev's constant corank result implies that the multiplicity of $\lambda_1(Q_w)$ is constant and equal to $k$ for all $w \in \mathcal{F}_0$. Therefore, the conditions in Theorem 1.3 in \cite{IP-13} are satisfied and any critical point $w$ of $\lambda_1: \mathcal{F}_0 \to [0,1]$ corresponds to equilaterality of $Q_w$. Using the same notation $w = w_{k+1}$ and $q_j = \sigma_j(w)$ for $j \in [k]$ as in Corollary \ref{cor:CoxMdescr}, vertex transitivity of $Q_w$ implies that equilaterality of $Q_w$ holds precisely if $\Vert w - w_i \Vert$ is constant for all $i \in [k]$. Since
  $$ \Vert w - w_i \Vert = 2 \Vert \langle w,n_i \rangle n_i \Vert = |\langle w,n_i \rangle| $$
  and
  $$ \langle w, n_i \rangle = \sum_j \alpha_j \langle p_j,n_i \rangle = \alpha_i V >0,$$
  equalaterality of $Q_w$ holds precisely if the vector $\alpha_0 = (\alpha_1,\dots,\alpha_k)$ is constant. This shows that $\lambda_1$ has a unique minimum inside $\mathcal{F}_0$ at the point $w_0\in \mathcal{F}_0$ given in the theorem.
\end{proof}

In this example, $S_{\rm{reg}}^{k-1}$ is the disjoint union of $\Gamma$-images of the spherical fundamental domain $\mathcal{F}_0$ in Corollary \ref{cor:CoxMdescr} and the number of connected components of $S_{\rm{reg}}^{k-1}$ agrees with the cardinality of $\Gamma$. Moroever, only the convex hulls $Q_w$ of orbits of points $w \in S_{\rm{reg}}^{k-1}$ are called permutahedra of the Coxeter group, they have $|\Gamma|$ many vertices and are all of the same combinatorial type. In contrast, the convex hull $Q_w$ of an orbit of a point $w \in S^{k-1} \setminus S_{\rm{reg}}^{k_1}$ has a strictly smaller number of vertices. The combinatorial types of the polytopes $Q_w$ for the Coxeter groups $A_3, B_3$ and $H_3$ are illustrated in Figures 2 and 4 of \cite{IP-13}, where the polytopes corresponding to $w \in S^2 \setminus S_{\rm{reg}}^2$ have still the combinatorial structures of certain Archimedean solids. We like to emphasize that this example differs significantly from the following one. There, in Subsection \ref{subsec:rottet}, $S_{\rm{reg}}^2$ is a punctured sphere consisting therefore of only of one connected component. Moreover, in contrast to permutahedra of Coxeter groups, the combinatorial type of $Q_w$ changes within the regular set $S_{\rm{reg}}^2$. 
  
\subsection{Example: The pure rotational tetrahedral group}
\label{subsec:rottet}

Let $\Gamma \subset SO(3)$ be the group
$$ \Gamma = \set{ \begin{pmatrix}\epsilon_1 & 0 & 0 \\ 0 & \epsilon_2 & 0 \\ 0 & 0 & \epsilon_3 \end{pmatrix}, 
\begin{pmatrix} 0 & \epsilon_1 & 0 \\ 0 &  0 & \epsilon_2 \\ \epsilon_3 & 0 & 0 \end{pmatrix},
\begin{pmatrix} 0 & 0 & \epsilon_1 \\ \epsilon_2 & 0 & 0 \\ 0 & \epsilon_3 & 0 \end{pmatrix}}{\epsilon_1,\epsilon_2,\epsilon_3 \in \{-1,1\}, \epsilon_1 \epsilon_2 \epsilon_3 = 1}, $$
which has $12$ elements.
Since $\Gamma$ acts transitively on the vertices on the regular tetrahedron 
$$ Q = \conv\{v_1,v_2,v_3,v_4\} \subset \mathbb{R}^3$$
with
$$ v_1 = (1,1,1)^\top, \quad v_2 = (1,-1,-1)^\top, \quad v_3 =  (-1,-1,1)^\top, \quad v_4 = (-1,1,-1)^\top, $$
by rotations, it is called the 
\emph{pure rotational tetrahedral group}. Various properties of this group are discussed in \cite{XC-19}. The map
$$ S^2(r) \ni w \mapsto Q_w = \conv(\Gamma w) $$
is given by
$$ Q_{(a,b,c)} = \conv\set{(\epsilon_1 a,\epsilon_2 b,\epsilon_3 c)^\top,(\epsilon_1 b,\epsilon_2 c,\epsilon_3 a)^\top,(\epsilon_1 c,\epsilon_2 a,\epsilon_3 b)^\top}{\epsilon_1,\epsilon_2,\epsilon_3 \in \{-1,1\}, \epsilon_1 \epsilon_2 \epsilon_3 = 1}. $$
The regular subset of the $\Gamma$-action on $S^2_{\rm{reg}}(r)$ is given by
$$ S_{\rm{reg}}^{2}(r) = S^2(r) \setminus \left\{ (\pm \frac{r}{\sqrt{3}},\pm \frac{r}{\sqrt{3}},\pm \frac{r}{\sqrt{3}})^\top, (\pm r,0,0)^\top, (0,\pm r,0)^\top, (0,0,\pm r)^\top \right\},  $$
that is, $S_{\rm{reg}}^2(r)$ is a $14$-punctured sphere.

By applying suitable elements of $\Gamma$ to a generic orbit representative $(a,b,c)^\top \in S^2(r)$ with $|a|,|b|,|c| > 0$ pairwise different, one can see that an open spherical fundamental domain of the $\Gamma$-action on $S^2(r)$ is given by
$$ \mathcal{F}_0 := \set{(a,b,c)^\top \in S^2(r)}{a > b > |c| > 0 \quad \text{or} \quad a > c> |b| > 0}. $$
To cover all different convex hulls of $\Gamma$-orbits, we take the closure
$$ \overline{\mathcal{F}_0} := \set{(a,b,c)^\top \in S^2(r)}{a \ge b \ge |c| \ge 0 \quad \text{or} \quad a \ge c\ge |b| \ge 0}, $$
and restrict our considerations to its subset
$$ \mathcal{M} = \set{(a,b,c)^\top \in S^2(r)}{ a \ge b \ge |c| \ge 0}, $$
which we call the \emph{moduli space} of (convex hulls of) $\Gamma$-orbits in $S^2(r)$. Note that the set $\overline{\mathcal{F}_0} \setminus \mathcal{M}$ is obtained from $\mathcal{M}$ by the isometry permuting the $y$- and $z$-coordinates which does not lead to essentially new polytopes. 
To avoid an excessive use of square-roots, we relax the choice of representatives in $\mathcal{M}$ by no longer requiring that they have a fixed length $r > 0$. Note that, under this relaxation, the regular tetrahedron appears in this variation as the polytope $Q_{(1,1,1)^\top}$. The moduli space $\mathcal{M}$ of the pure rotational tetrahedral group is illustrated in Figure \ref{fig:modspacetetrahed}. The reflective symmetry of the polytopes represented by points in $\mathcal{M}$ in the $c$-coordinate are a consequence of the fact that the points $(a,b,c)^\top \in \mathcal{M}$ and $(a,b,-c)^\top \in \mathcal{M}$ generate essentially the same polytopes via the ambient isometry $(x,y,z) \mapsto (x,y,-z)$ which is, however, not an element in $\Gamma$.  

\begin{figure}[h]
\begin{center}
    \begin{tikzpicture}[scale=0.8]
    \draw[black, thick] [-stealth] (6,0) -- (-6,0);
    \node[align=right] at (-6.5,0) {$c$};
    \draw[black, thick] [-stealth] (-7,1) -- (-7,7);
    \node[align=left] at (-7,7.5) {$b$};
    \draw[black, thick] (-5,-0.2) -- (-5,0.2);
    \node[align=left] at (-5,-0.4) {$a$};
    \draw[black, thick] (0,-0.2) -- (0,0.2);
    \node[align=left] at (0,-0.4) {$0$};
    \draw[black, thick] (5,-0.2) -- (5,0.2);
    \node[align=left] at (5,-0.4) {$-a$};
    \draw[blue] (0,1) -- (-5,6);
    \draw[blue, dashed] (0,1) -- (0,6);
    \draw[blue] (0,1) -- (5,6);
    \draw[blue] (-5,6) -- (5,6);
    \node[blue,align=right] at (-1.8,4.5) {\LARGE$\mathcal{M}_{\rm{reg}}$};
    \draw[black, thick] (-7.2,1) -- (-6.8,1);
    \node[align=left] at (-7.4,1) {$0$};
    \draw[black, thick] (-7.2,6) -- (-6.8,6);
    \node[align=left] at (-7.4,6) {$a$};
    \node[blue,align=right] at (-2.5,6.3) {$a=b$};
    \filldraw[black] (0,1) circle (2pt);
    \node[align=left] at (1.5,0.8) {Octahedron\\ $(1,0,0)$};
    \filldraw[blue] (0,6) circle (2pt);  
    \node[align=center,blue] at (0,6.6) {Cuboctahedron\\ $(1,1,0)$};
    \filldraw[black] (5,6) circle (2pt);
    \node[align=center] at (5,6.6)
    {Tetrahedron\\ $(1,1,-1)$};
    \filldraw[black] (-5,6) circle (2pt);
    \node[align=center] at (-5,6.6)
    {Tetrahedron\\ $(1,1,1)$};
    \filldraw[blue] (-1.6,2.6) circle (2pt);
    \node[align=right,blue] at (-4,2.3)
    {Truncated Tetrahedron\\ $(3,1,1)$};
    \filldraw[blue] (1.6,2.6) circle (2pt);
    \node[align=left,blue] at (4.1,2.3)
    {Truncated Tetrahedron\\ $(3,1,-1)$};
    \node[blue,align=right] at (-4.5,4.5)
    {$b=c$};
    \node[blue,align=left] at (4.5,4.5)
    {$b=-c$};
    \node[blue,align=left] at (0.7,5.3) {$c=0$};
    \filldraw[blue] (0,4) circle (2pt);
    \node[align=left,blue] at (1.3,4)
    {Icosahedron\\ $(\phi,1,0)$};
\end{tikzpicture}
\end{center}
\caption{The moduli space $\mathcal{M}$ of vertex transitive $k$-polytopes $Q_{(a,b,c)^\top}=\conv(\Gamma (a,b,c)^\top)$ with respect to the pure rotational tetrahedral group $\Gamma$ with $\phi = \frac{1 +\sqrt{5}}{2}$ the golden ratio -- the subspace $\mathcal{M}_{\rm{reg}}$ is illustrated in blue. Note that the restriction $(a,b,c)^\top \in S^2(r)$ is dropped, for simplicity. \label{fig:modspacetetrahed}}
\end{figure}
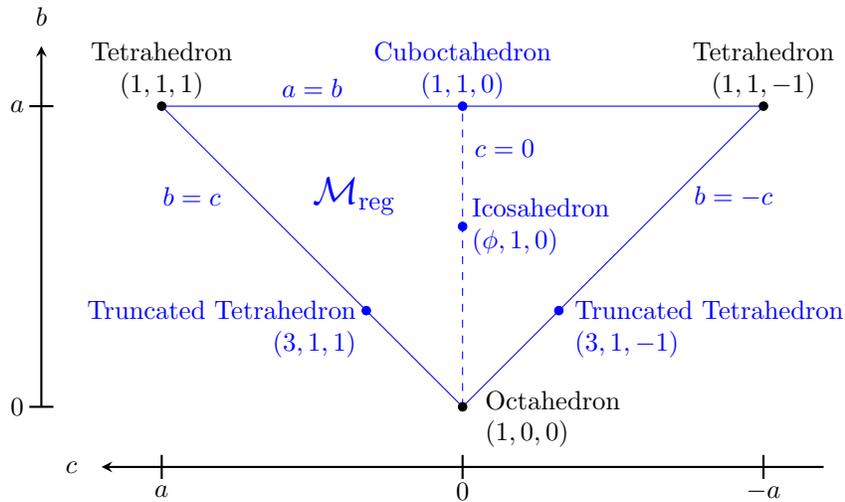

\FloatBarrier

The regular subset 
$$ \mathcal{M}_{\rm{reg}} = \mathcal{M} \setminus \set{(a,b,c)^\top}{a=b=|c|>0 \quad \text{or} \quad a>b=c=0} $$ 
of the moduli space $\mathcal{M}$ corresponding to $S^2_{\rm{reg}}(r)$ is illustrated in Figure \ref{fig:modspacetetrahed} in blue and decomposes into subsets of polytopes with fixed combinatorial type as follows: 


\begin{itemize}
    \item ${\mathcal{M}}_{\rm{icosa}}$ is the generic combinatorial type (see Figure \ref{fig:poly-icosahedral} for a polyhedron falling into this class) whose vertex-edge graph corresponds to that of an Icosahedron:
$$ {\mathcal{M}}_{\rm{icosa}} = \set{(a,b,c)^\top \in \mathcal{M}}{a > b > |c| \ge 0}. $$
In fact, $Q_{(\phi,1,0)^\top}$ with the golden ratio $\phi = \frac{1+\sqrt{5}}{2}$ is a regular icosahedron and therefore \emph{equilateral}.
\item $\mathcal{M}_{\rm{trunctetra}}^\pm$ is the combinatorial type whose vertex-edge graph corresponds to that of a truncated tetrahedron:
$$ \mathcal{M}_{\rm{trunctetra}}^\pm = \set{(a,b,c)^\top \in \mathcal{M}}{a > b = \pm c > 0}. $$
In fact, $Q_{(3,1,\pm 1)^\top}$ are \emph{equilateral} truncated tetrahedra.
\item $\mathcal{M}_{\rm{cubocta}}$ is the combinatorial type whose vertex-edge graph corresponds to that of a cuboctahedron:
$$ \mathcal{M}_{\rm{cubocta}} = \set{(a,b,c)^\top \in \mathcal{M}}{a = b > |c| \ge 0}. $$
In fact, $Q_{(1,1,0)^\top}$ is an \emph{equilateral}
cuboctahedron.
\end{itemize}
Note that $\mathcal{M} \setminus \mathcal{M}_{\rm{reg}}$ decomposes similarly into the discrete subsets $\mathcal{M}_{\rm{tetra}}^\pm = \{(1,1,\pm 1) \}$ and $\mathcal{M}_{\rm{octa}} = \{ (1,0,0) \}$, corresponding to regular tetrahedra and regular octahedron, respectively, which are \emph{equilateral} polyhedra. 

Let us now focus on the subset $\mathcal{M}_{\rm{icosa}} \subset \mathcal{M}_{\rm{reg}}$ corresponding to polytopes of icosahedral type. The vertex $w = (a,b,c)^\top \in \mathcal{M}_{\rm{icosa}}$ has five neighbours, given by
$$ \gamma_1 w = (a,-b,-c)^\top, \,  \gamma_2 w = (b,c,a)^\top, \, \gamma_2^{-1} w = (c,a,b)^\top, \, \gamma_3 w = (b,-c,-a)^\top, \, \gamma_3^{-1} w = (-c,a,-b)^\top $$
with $\gamma_1 = \gamma_1^{-1}$, and we enumerate them by $w_1,w_2,w_3,w_4,w_5 \in Q_w$, respectively. Here, $\gamma_1,\gamma_2$ and $\gamma_3$ are generators of $\Gamma$, given by 
\begin{equation} \label{eq:gamma123} 
\gamma_1 = \begin{pmatrix} 1 & 0 & 0 \\ 0 & -1 & 0 \\ 0 & 0 & -1 \end{pmatrix}, \quad \quad \gamma_2 = \begin{pmatrix} 0 & 1 & 0 \\ 0 & 0 & 1 \\ 1 & 0 & 0 \end{pmatrix}, \quad \quad \gamma_3 = \begin{pmatrix} 0 & 1 & 0 \\ 0 & 0 & -1 \\ -1 & 0 & 0 \end{pmatrix}. 
\end{equation}
Note that $\gamma_1 = \gamma_1^{-1}$.
Let $w_6 = w = (a,b,c) \in \mathcal{M}_{\rm{icosa}}$ and $w_7,\dots,w_{12}$ be the following remaining vertices: of $Q_w$:
\begin{align*}
    w_7 &= (-a,b,-c)^\top, & w_8 &= (-b,c,-a)^\top, & w_9 &= (c,-a,-b)^\top, \\
    w_{10} &= (-a,-b,c)^\top, & w_{11} &= (-b,-c,a)^\top, & w_{12} &= (-c,-a,b)^\top.
\end{align*}
The polyhedron $Q_w$ (for the special choice $w=(3,2,1)^\top$) is illustrated in Figure \ref{fig:poly-icosahedral} and has $20$ triangular faces given by
{\small{\begin{align*}
    f_1 &= \conv(w_1,w_2,w_6), & f_2 &= \conv(w_1,w_2,w_{12}), & f_3 &= \conv(w_1,w_4,w_6), & f_4 &= \conv(w_1,w_4,w_9), \\
    f_5 &= \conv(w_1,w_9,w_12), & f_6 &= \conv(w_2,w_3,w_6), & f_7 &= \conv(w_2,w_3,w_{11}), & f_8 &=\conv(w_2,w_{11},w_{12}), \\
    f_9 &= \conv(w_3,w_5,w_6), & f_{10} &= \conv(w_3,w_5,w_7), & f_{11} &= \conv(w_3,w_7,w_{11}), & f_{12} &=\conv(w_4,w_5,w_6), \\
    f_{13} &= \conv(w_4,w_5,w_8), & f_{14} &= \conv(w_4,w_8,w_9), & f_{15} &= \conv(w_5,w_7,w_8), & f_{16} &=\conv(w_7,w_8,w_{10}), \\
    f_{17} &= \conv(w_7,w_{10},w_{11}), & f_{18} &= \conv(w_8,w_9,w_{10}), & f_{19} &= \conv(w_9,w_{10}8,w_{12}), & f_{20} &=\conv(w_{10},w_{11},w_{12}).
\end{align*}
}}

\begin{figure}
     \begin{center}
        \includegraphics[width=0.44\textwidth]{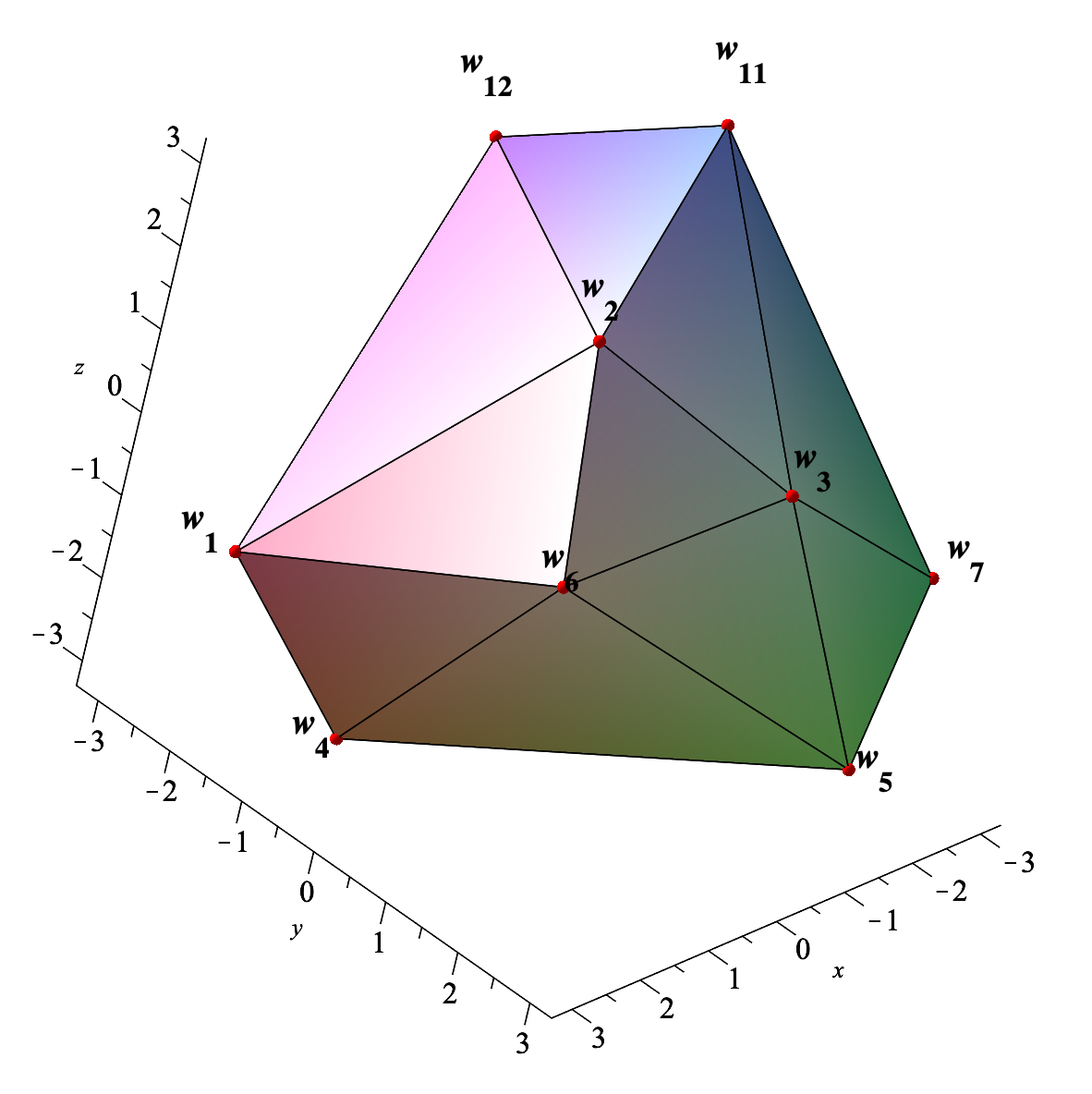}
        \includegraphics[width=0.54\textwidth]{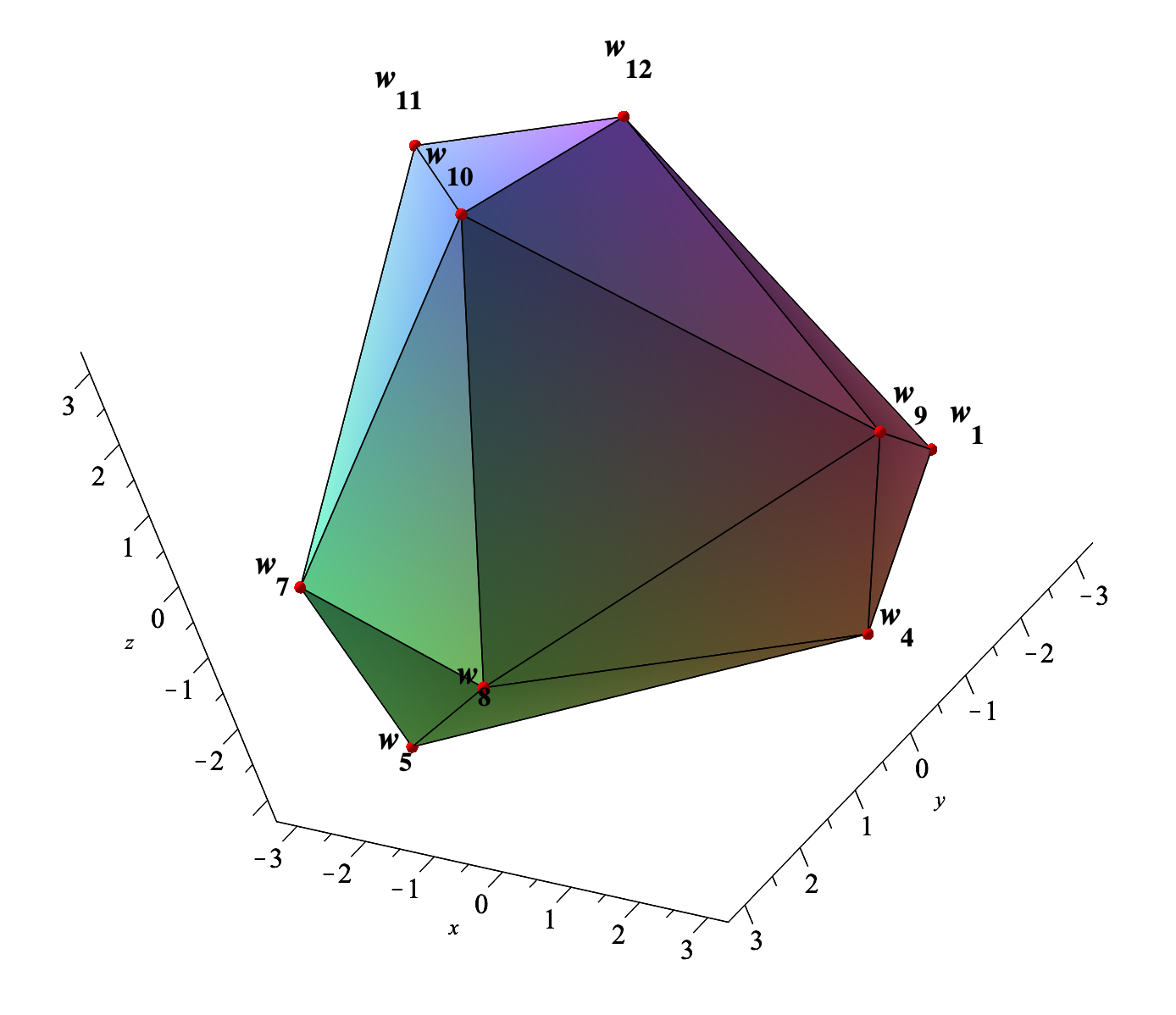}
     \end{center}
     \caption{Different views of the polytope $Q_w$ of icosahedral type with $w = (3,2,1)^\top$ 
     \label{fig:poly-icosahedral}}
\end{figure}

\FloatBarrier

The faces $f_2, f_4, f_6, f_{11}, f_{12}, f_{15}, f_{18}, f_{20}$ are equilateral triangles for any choice of $w_6 = w \in \mathcal{M}_{\rm{icosa}}$.

\smallskip

Let us now derive the weights $x_1,x_2,x_3$ of the random walk matrix $T(Q_w)$. For any $w = w_6 \in \mathcal{M}$, the equation
$$ w_6 = x_1' w_1 + x_2' (w_2+w_3) + x_3' (w_4+w_5) $$
has a unique solution, given by
$$ 
x_1' = \frac{(a-b)(a+b+c)(a+b-c)}{D'}, \quad
x_2' = \frac{a(a+b-c)(b+c)}{D'}, \quad
x_3' = \frac{a(a+b+c)(b-c)}{D'},
$$
with $D' = a^3 + a^2 b + a b^2 +b^3 + a c ^2 - b c^2 > 0$. Choosing $(x_1,x_2,x_3) = \mu (x_1',x_2',x_3')$ such that $x_1+2(x_2+x_3) = 1$
leads to
\begin{equation} \label{eq:abcx1x2x3} 
x_1 = \frac{(a-b)(a+b+c)(a+b-c)}{D}, \quad
x_2 = \frac{a(a+b-c)(b+c)}{D}, \quad
x_3 = \frac{a(a+b+c)(b-c)}{D},
\end{equation}
with $D = a^3+5a^2 b +3ab^2 - 5ac^2 - b^3 + bc^2 > 0$. 
Note that this transformation from $w = (a,b,c)^\top \in \mathbb{R}^3 \neq \{\0\}$ with $a \ge b \ge |c|$  into $(x_1,x_2,x_3)$ is not only well-defined on $\mathcal{M}_{\rm{isoca}}$ but on all of $\mathcal{M}_{\rm{reg}}$, and its outcome is invariant under rescaling $w \mapsto \mu w$, $\mu>0$. The $\Gamma$-action is simply transitive on the $12$ vertices of any polytope $Q_w$ with $w \in \mathcal{M}_{\rm{reg}}$, and we have $x_2=0$ or $x_3=0$ for all polytopes represented by $\mathcal{M}_{\rm{trunctetra}}^\pm$ and $x_1=0$ for all polytopes represented by $\mathcal{M}_{\rm{cubocta}}$. Table \ref{tab:values-tetrahedral-group} provides information about this transformation at specific points. 

The random walk matrix $T(Q_w)$, in terms of the weights $(x_1,x_2,x_3)$, takes the form
\begin{equation} \label{eq:TQw}
T(Q_w) = \begin{pmatrix}
0   & x_3 & 0   & x_2 & 0   & x_1 & 0   & 0   & x_2 & 0   & 0   & x_3 \\  
x_3 & 0   & x_2 & 0   & 0   & x_2 & 0   & 0   & 0   & 0   & x_1 & x_3 \\
0   & x_2 & 0   & 0   & x_1 & x_2 & x_3 & 0   & 0   & 0   & x_3 & 0 \\
x_2 & 0   & 0   & 0   & x_3 & x_3 & 0   & x_1 & x_2 & 0   & 0   & 0 \\
0   & 0   & x_1 & x_3 & 0   & x_3 & x_2 & x_2 & 0   & 0   & 0   & 0 \\
x_1 & x_2 & x_2 & x_3 & x_3 & 0   & 0   & 0   & 0   & 0   & 0   & 0 \\
0   & 0   & x_3 & 0   & x_2 & 0   & 0   & x_2 & 0   & x_1 & x_3 & 0 \\
0   & 0   & 0   & x_1 & x_2 & 0   & x_2 & 0   & x_3 & x_3 & 0   & 0 \\
x_2 & 0   & 0   & x_2 & 0   & 0   & 0   & x_3 & 0   & x_3 & 0   & x_1 \\
0   & 0   & 0   & 0   & 0   & 0   & x_1 & x_3 & x_3 & 0   & x_2 & x_2 \\
0   & x_1 & x_3 & 0   & 0   & 0   & x_3 & 0   & 0   & x_2 & 0   & x_2 \\
x_3 & x_3 & 0   & 0   & 0   & 0   & 0   & 0   & x_1 & x_2 & x_2 & 0
\end{pmatrix},
\end{equation}
and its characteristic polynomial is given by
\begin{multline*} 
\det(T(Q_w) - x \cdot \ID_{12}) =
(x-1)\cdot (x-x_1+x_2+x_3)^2 \cdot \\
(x^3+x_1 \,x^2 + (-x_1^2-3x_2^2+2x_2x_3-3x_3^2)\,x -x_1^3-x_1 x_2^2-2x_1x_2x_3-x_1x_3^2 - 2x_2^3+2x_2^2x_3+2x_2x_3^2-2x_3^3)^3.
\end{multline*}
Therefore the stochastic matrix $T(q_w)$ has the following spectrum for $w \in \mathcal{M}_{\rm{reg}}$: a simple eigenvalue $\lambda_0(w)=1$, three eigenvalues $\lambda_1(w) > \lambda_2(w) > \lambda_3(w)$ of multiplicity $3$ which are the roots of the third polynomial factor on the right hand side, and the eigenvalue $\lambda_4(w) = x_1 - (x_2+x_3)$ with $x_1,x_2,x_3$ given by \eqref{eq:abcx1x2x3} of multiplicity $2$. The eigenvalue functions over the moduli space are illustrated in Figure \ref{fig:eigenvalues}. We chose a slightly different perspective than the one given in Figure \ref{fig:modspacetetrahed} for a better illustration of these functions. Note that $\lambda_1(w) > \lambda_4(w) > \lambda_3(w)$, but $\lambda_2$ and $\lambda_4$ have not ordering in size.  

\begin{figure}[h]
     \begin{center}
        \includegraphics[width=0.48\textwidth]{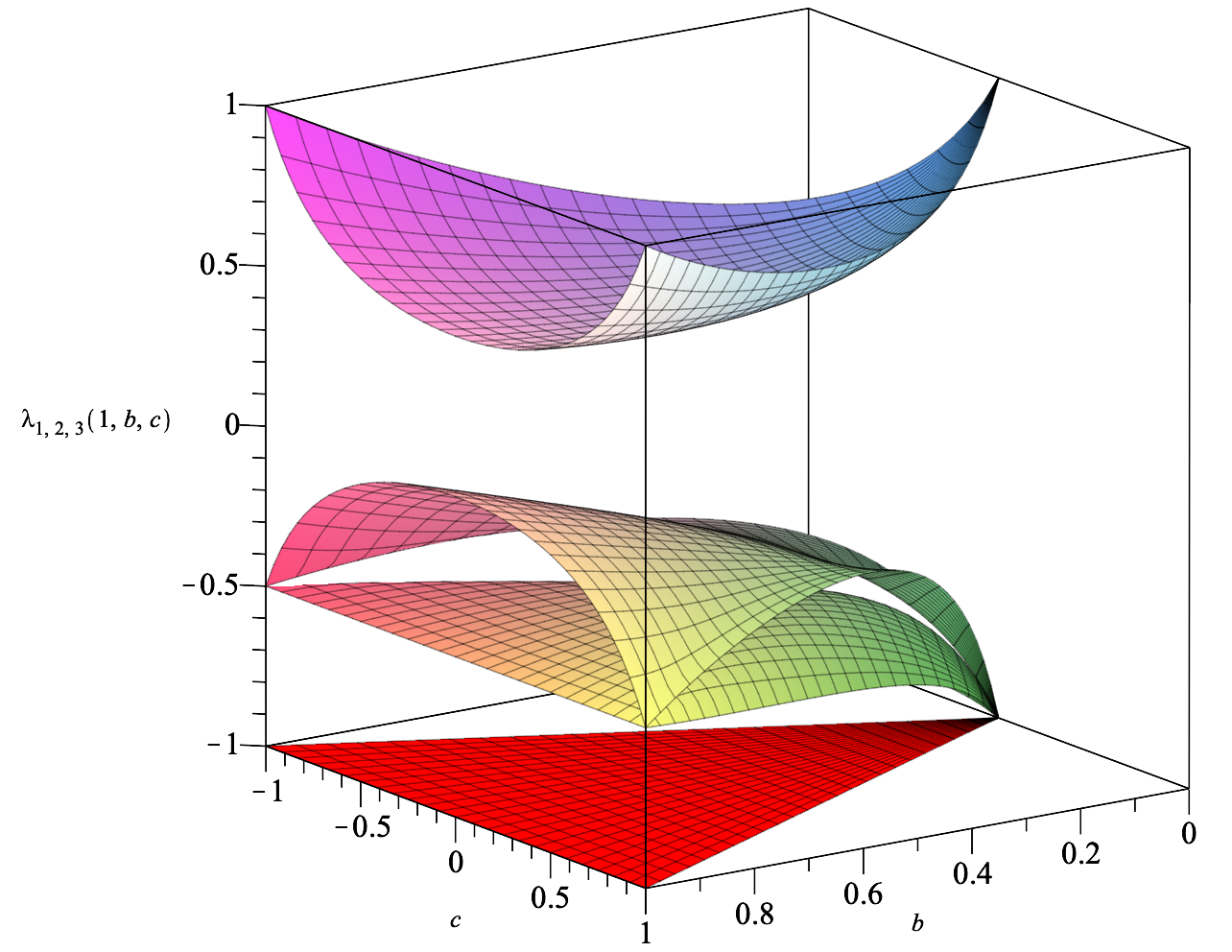}
        \includegraphics[width=0.48\textwidth]{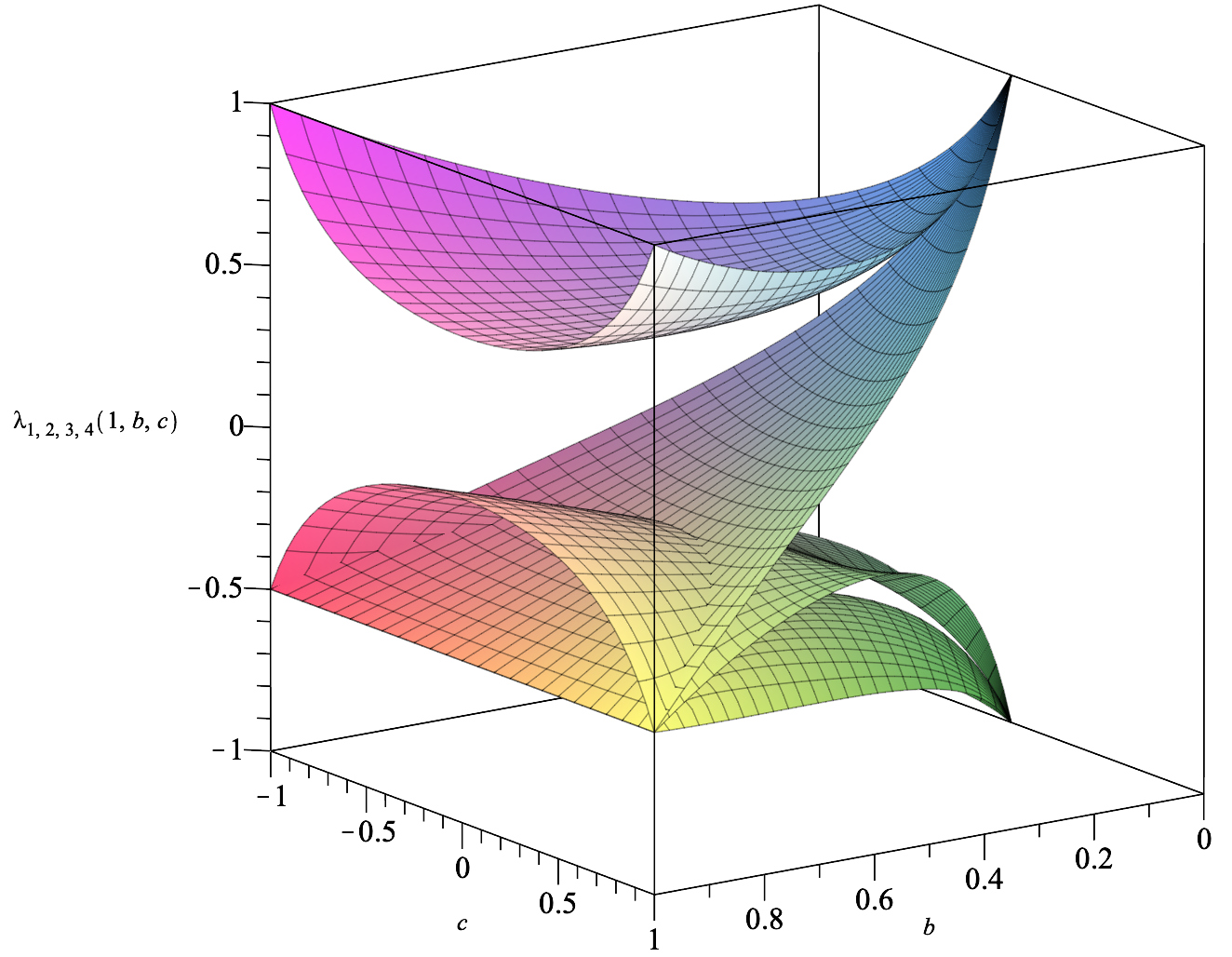}
     \end{center}
     \caption{The functions $\lambda_1,\lambda_2,\lambda_3$ corresponding to the eigenvalues of multiplicity $3$ of $T(Q_w)$ (left) and to all eigenvalues except the trivial one $\lambda_0 \equiv 1$ (right) over the regular moduli space $\mathcal{M}_{\rm{reg}}$. The red triangle at the bottom of the left hand figure illustrates the triangular domain $\mathcal{M}_{\rm{reg}}$ of the eigenfunctions $\lambda_i(a,b,c)$ with fixed $a=1$.
     \label{fig:eigenvalues}}
\end{figure}

\FloatBarrier

Let us now focus on the second largest eigenvalue $\lambda_1(w)$ of $T(Q_w)$. Figure \ref{fig:lambda1tetrahed} illustrates this function over the moduli space $\mathcal{M}_{\rm{reg}}$ and Table \ref{tab:values-tetrahedral-group} provides concrete $\lambda_1$-values at specific points.
The points of $\mathcal{M}_{\rm{reg}}$ corresponding to the \emph{equilateral} polyhedra are marked by vertical black lines, and we see that the $\lambda_1$-function over $\mathcal{M}_{\rm{reg}}$ has a global minimum at the equilateral icosahedron. We noticed also in Figure \ref{fig:eigenvalues} (left) that the global minimum of $\lambda_1$ agrees with the global maximum of $\lambda_3$. Moreover, $\lambda_1$ has local minima within the subsets $\mathcal{M}_{\rm{cubocta}}$ and $\mathcal{M}_{\rm{trunctetra}}^\pm$ at the equilateral cuboctahedron and the equilateral truncated tetrahedra, respectively. The $\Gamma$-action is no longer simply transitive on 
the vertices of the tetrahedra $\conv(\Gamma (1,1,\pm 1)^\top)$ and the octahedron $\conv(\Gamma (1,0,0)^\top)$, for which the random walk matrices have smaller sizes and assume the form
$$ T(Q_w) = \begin{pmatrix} 0 & \frac{1}{3} & \frac{1}{3} & \frac{1}{3} \\
\frac{1}{3} & 0 & \frac{1}{3} & \frac{1}{3} \\ \frac{1}{3} & \frac{1}{3} & 0 & \frac{1}{3} \\ \frac{1}{3} & \frac{1}{3} & \frac{1}{3} & 0 \end{pmatrix} \quad \text{and} \quad T(Q_w) = \begin{pmatrix} 0 & \frac{1}{4} & 0 & \frac{1}{4} & \frac{1}{4} & \frac{1}{4} \\ \frac{1}{4} & 0 & \frac{1}{4} & 0 & \frac{1}{4} & \frac{1}{4} \\ 0 & \frac{1}{4} & 0 & \frac{1}{4} & \frac{1}{4} & \frac{1}{4} \\
\frac{1}{4} & 0 & \frac{1}{4} & 0 & \frac{1}{4} & \frac{1}{4} \\ \frac{1}{4} & \frac{1}{4} & \frac{1}{4} & \frac{1}{4} & 0 & 0 \\ \frac{1}{4} & \frac{1}{4} & \frac{1}{4} & \frac{1}{4} & 0 & 0 \end{pmatrix},$$
with corresponding $\lambda_1$-values $-\frac{1}{3}$ and $-\frac{1}{2}$, respectively. We note that at these points the $\lambda_1$-function is discontinuous. 

\begin{figure}[h]
    \begin{center}
        \includegraphics[width=11cm]{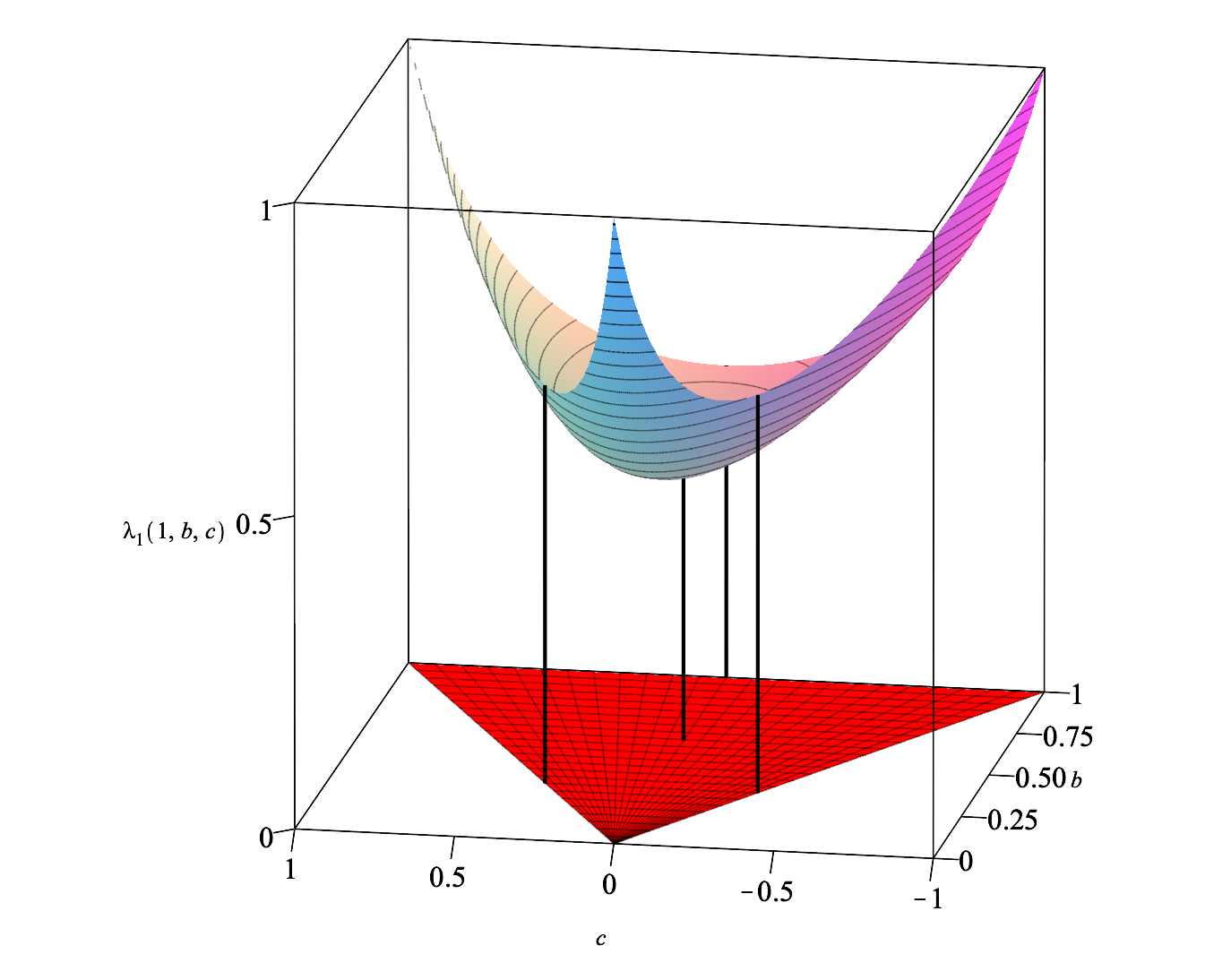}
    \end{center}
    \caption{The function $w \mapsto \lambda_1(w)$ over the moduli space $\mathcal{M}_{\rm{reg}}$; note that the restriction $(a,b,c) \in S^2(r)$ is dropped, and points $w=(1,b,c)^\top$, $1 > b > |c|$ are used, instead.
    \label{fig:lambda1tetrahed}}
\end{figure}



\begin{table}
    \begin{tabular}{l | r r r|l l l|l}
    Polytope & $a$ & $b$ & $c$ & $x_1$ & $x_2$
 & $x_3$ & $\lambda_1$ \\
    \hline
    Tetrahedron & $1$ & $1$ & $-1$ & $0$ & $0$ & $\frac{1}{2}$ & $-\frac{1}{3}$ \color{blue}{($1$)}\\[2pt]
    Cuboctahedron & $1$ & $1$ & $0$ & $0$ & $\frac{1}{4}$ & $\frac{1}{4}$ & $\frac{1}{2}$ \\[2pt]
    Tetrahedron & $1$ & $1$ & $1$ & $0$ & $\frac{1}{2}$ & $0$ & $-\frac{1}{3}$ \color{blue}{($1$)} \\[2pt]
    Icosahedron & $\phi$ & $1$ & $0$ & $\frac{1}{5}$ & $\frac{1}{5}$ & $\frac{1}{5}$ & $0.447214\dots$ \\[2pt]
    Truncated Tetrahedron & $3$ & $1$ & $-1$ & $\frac{5}{11}$ & $0$ & $\frac{3}{11}$ & $\frac{7}{11}$ \\[2pt]
    Truncated Tetrahedron & $3$ & $1$ & $1$ & $\frac{5}{11}$ & $\frac{3}{11}$ & $0$ & $\frac{7}{11}$ \\[2pt]
    Octahedron & $1$ & $0$ & $0$ & $1$ & $0$ & $0$ & $-\frac{1}{2}$ \color{blue}{($1$)}
    \end{tabular}
    \caption{Transformation of $w=(a,b,c)^\top$ values in $(x_1,x_2,x_3)$-weights and second largest eigenvalue $\lambda_1(w)$ of $T(Q_w)$; the $\lambda_1$-values in blue are continuous extensions of the $\lambda_1$-function on $\mathcal{M}_{\rm{reg}}$ to these extremal points of the moduli space. \label{tab:values-tetrahedral-group}}
\end{table}

In the case 
$(a,b,c)=(2,2,1)$, that is, the type of a cuboctahedron (see left image in Figure \ref{fig:trunctetraandcuboct}),
we have $14$ faces, the $8$ equilateral triangles $f_2,f_4,f_6,f_{11},f_{12},f_{15},f_{18},f_{20}$ and the six rectangles 
\begin{align*}
    r_1 &= \conv(w_1,w_2,w_6,w_4), & r_2 &= \conv(w_1,w_9,w_{10},w_{12}), & r_3 &= \conv(w_2,w_3,w_{11},w_{12}),  \\
    r_4 &= \conv(w_3,w_6,w_5,w_7), & r_5 &= \conv(w_4,w_5,w_8,w_9), &
    r_6 &= \conv(w_7,w_8,w_{10},w_{11}).
\end{align*} 
 
In the case $(a,b,c)=(3,1,1)$, 
that is, the type of a truncated tetrahedron (see right image in Figure \ref{fig:trunctetraandcuboct}),
we have $8$ faces, the four equilateral triangles $f_4,f_6,f_{15},f_{20}$ and four hexagons given by
\begin{align*}
    h_1 &= \conv(w_1,w_6,w_2,w_{11},w_{12},w_9), & h_2 &= \conv(w_1,w_4,w_8,w_5,w_3,w_6),  \\
    h_3 &= \conv(w_2,w_3,w_5,w_7,w_{10},w_{11}), & h_4 &= \conv(w_4,w_8,w_7,w_{10},w_{12},w_9).
\end{align*} 
 
\begin{figure}[ht]
\includegraphics[width=0.49\textwidth]{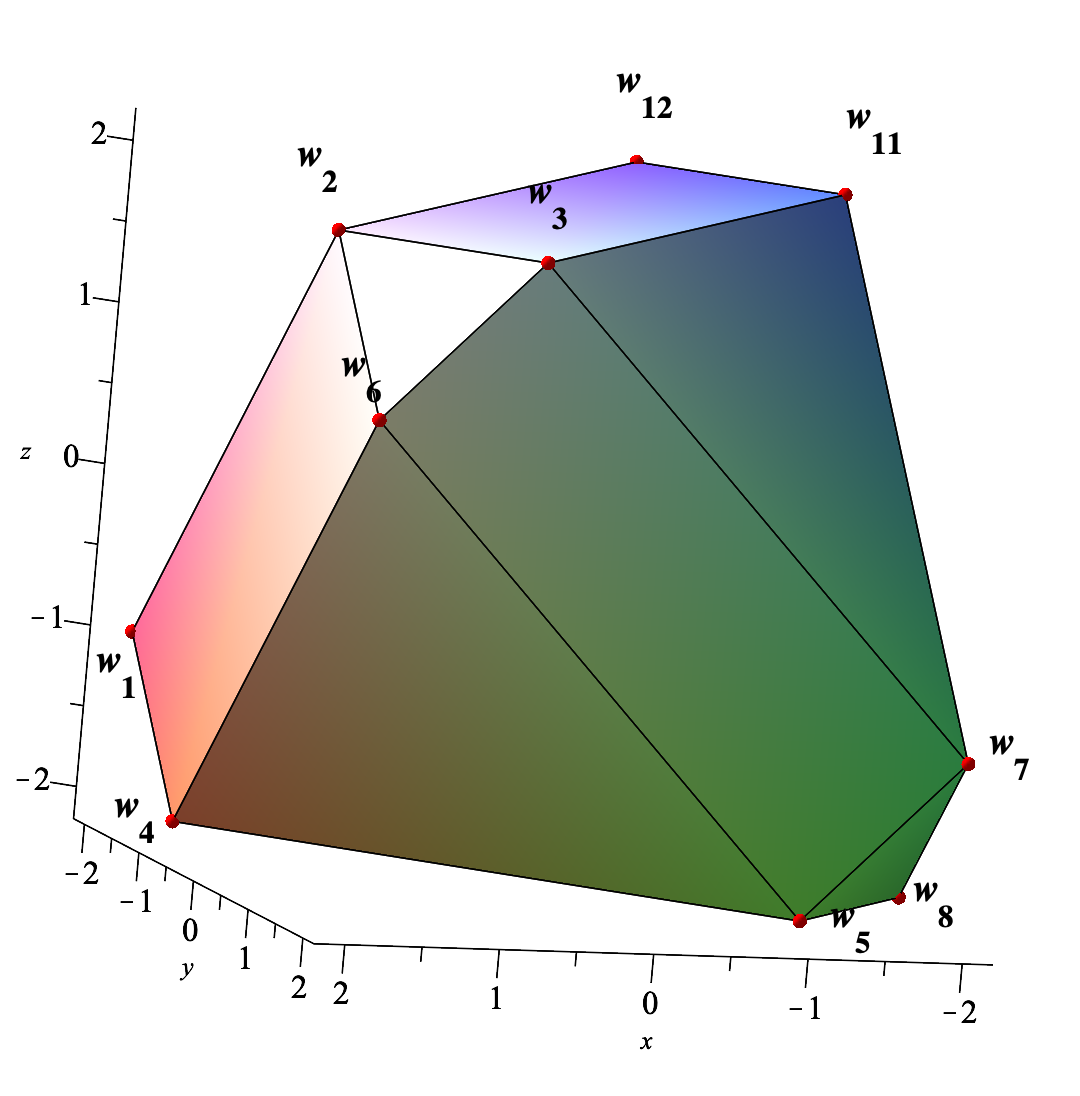}
\includegraphics[width=0.49\textwidth]{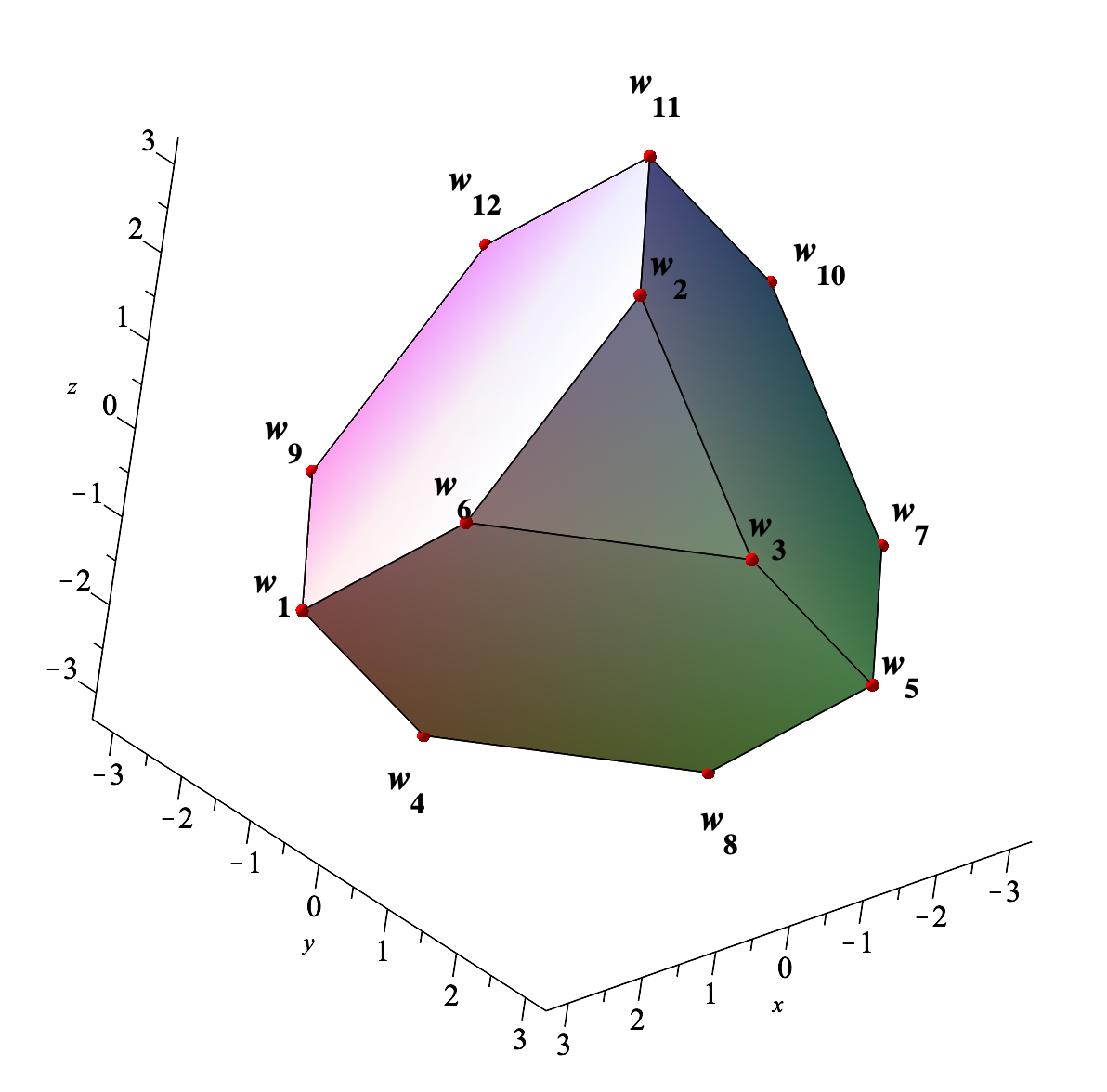}
\caption{The polytope $Q_w$ with $w=(2,2,1)$ (left) and with 
$w=(3,1,1)$ (right). \label{fig:trunctetraandcuboct}}
\end{figure}

\FloatBarrier

\section{Discussion and Open Questions}
\label{subsec:discusquestions}

In this section, we formulate some question which may provide interesting directions of further research.

Firstly, recall that Izmestiev's Colin de Verdi\`ere Matrix formula in Theorem \ref{thm:ivan} applies to general $k$-polytopes in $\R^k$. In the case of simple polytopes, our volume ratio formula in Theorem \ref{thm:main1} provides an alternative expression of the off-diagonal entries of this Colin de Verdi\`ere matrix in the case of simple polytopes (Corollary \ref{cor:altIavnCvD}). It would be interesting to investigate possible generalizations of our volume ratio formula beyond simple cones, which would lead to a corresponding generalization of Corollary \ref{cor:altIavnCvD}.

Regarding vertex transitive polytopes, we note that we only provided proofs of our statements for the example in Subsection \ref{subsec:coxperm} and relied on Maple computations in Subsection \ref{subsec:rottet}. Formal proofs of the statements in Subsection \ref{subsec:rottet} would most likely apply only to this specific example and be lengthy. We consider these statements as motivating observations of more general phenomena, which one should attempt to formulate rigorously and prove in its full generality.

For any finite group $\Gamma \subset O(k)$, the variation obtained by the map $$S^{k-1}(r) \ni w \mapsto Q_w = \conv(\Gamma w)$$ partitions $S^{k-1}(r)$ into subsets of polytopes of the same combinatorial type (distinguished via their face lattices, see \cite[Section 3.2]{G-03} or \cite[Section 2.2]{Z-95}). This leads to the following natural questions:
\begin{itemize}
\item[(a)] Are the connected components of these subsets of $S^{k-1}(r)$ \emph{submanifolds} of $S^{k-1}(r)$, providing a stratification of the sphere? If true, we refer to these submanifolds as the strata of $S^{k-1}(r)$.
\item[(b)] If (a) is true, does for each stratum $\mathcal{S}$ the following hold: $\lambda_1(w):=\lambda_1(T(Q_w))$ is a continuous function on $\mathcal{S}$, which assumes a \emph{global minimum inside the stratum}?
\item[(c)] If (a) and (b) are true, are all polytopes $Q_w$ corresponding to global minima of $\lambda_1(w)$ in any stratum $\mathcal{S}$ \emph{equilateral}?
\end{itemize}

Finally, let us discuss a possible generalization of Godsil's concept of eigenpolytopes (see \cite{G-78}). For a finite simple graph $G=(V,E)$ with vertex set $V = \{v_1,\dots,v_n\}$, adjacency matrix $A_G$, and fixed  eigenvalue $\theta$ of multiplicity $k$, we can construct a convex polytope in $\R^k$ as follows: Choose an arbitrary eigenbasis $\phi_1, \dots, \phi_k$ of $A_G$ corresponding to the eigenvalue $\theta$ and derive $w_1,\dots,w_n \in \R^k$ via the identity
\begin{equation} \label{eq:wphi-rel}
\begin{psmallmatrix} \zsvek{w_1} \\[-1mm] \vdots \\ \zsvek{w_n} \end{psmallmatrix} = \begin{psmallmatrix} \ssvek{\phi_1} & \cdots & \ssvek{\phi_k} \end{psmallmatrix}. 
\end{equation}
Then the associated \emph{$\theta$-eigenpolytope} is given by
$$ P_G(\theta) := \conv\set{w_i}{i \in [n]} \subset \R^k. $$
Moreover, there is a canonical map 
\begin{equation}
\label{eq:vertex-identification}
V \to \{v_1,\dots,w_n \}, \quad v_i \mapsto w_i. 
\end{equation}
Note that the combinatorial type of the $\theta$-eigenpolytope does not depend on the choice of basis, since any two $\theta$-eigenpolytopes are related via an invertible linear transformation of $\R^k$ (see, e.g., \cite[Theorem 2.1]{W-20}).
Conversely, given a convex polytope $P \subset \R^k$ with $n$ vertices $v_1,\dots, v_n \in \R^k$ and its associated vertex-edge graph $G=(V,E)$,  whose adjacency matrix $A_G$ has an eigenvalue $\theta$ of multiplicity $k$, following Winter in \cite[p. 5]{W-20}, we say that $P$ is \emph{$\theta$-spectral}, if the vertex-edge graph of the associated $\theta$-eigenpolytope $P_G(\theta)$ agrees with $G$ under the canonical map \eqref{eq:vertex-identification}.
It was mentioned in \cite[Observation 2.5(i)]{W-20} that there are no known $\theta$-spectral polytopes for $A_G$-eigenvalues $\theta$ which are not the second largest eigenvalue of $A_G$.

\begin{figure}[ht]
\includegraphics[width=0.44\textwidth]{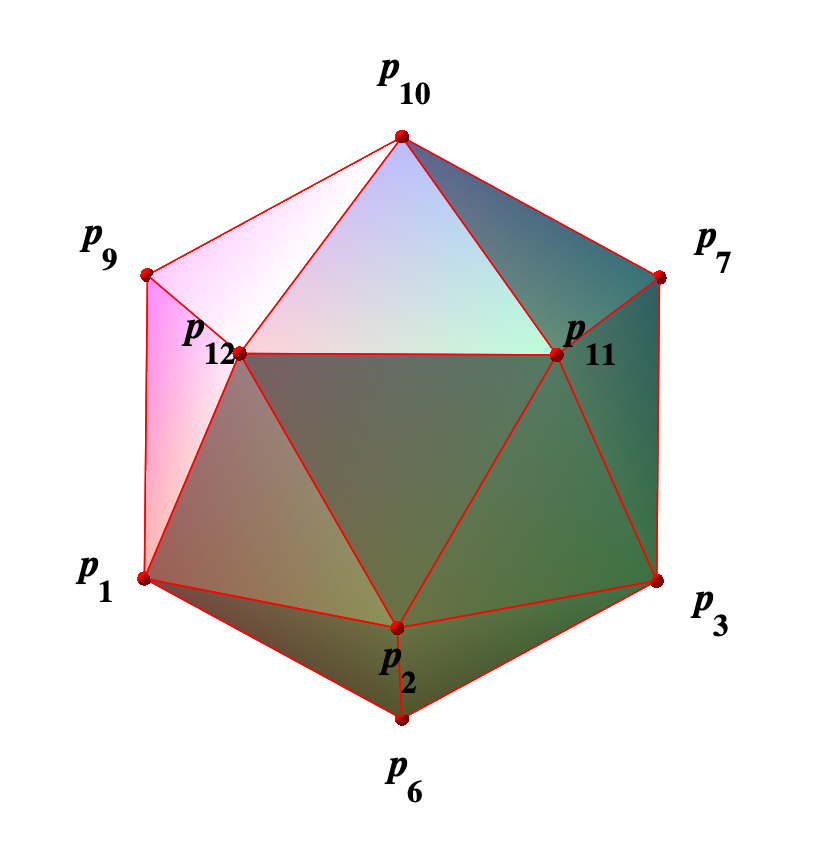}
\includegraphics[width=0.54\textwidth]{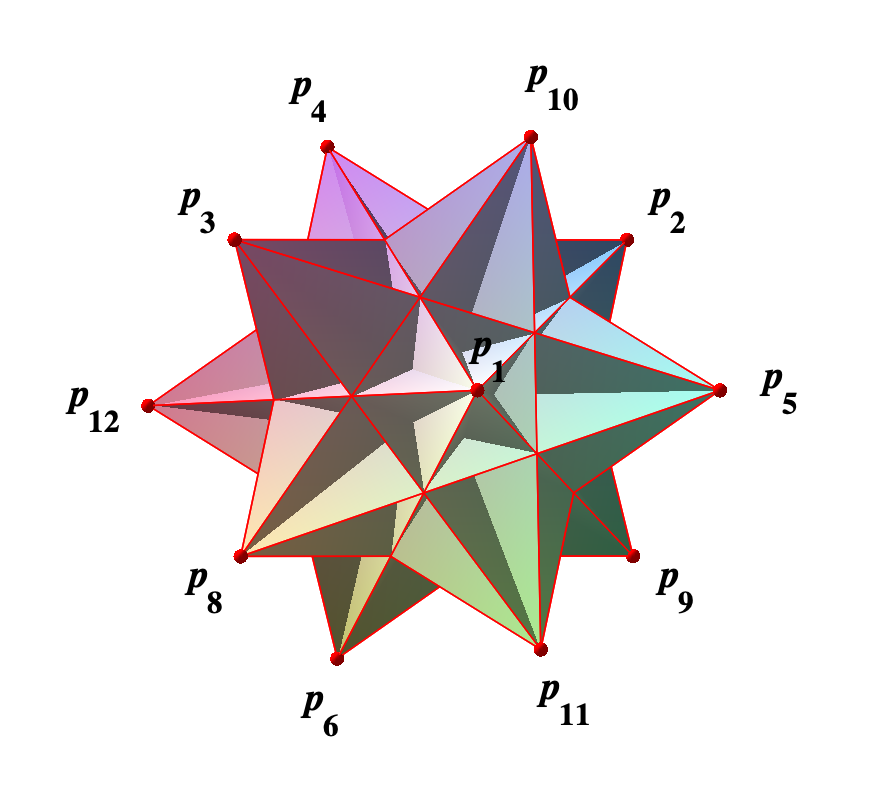}
\caption{Polytopes constructed via orthonormal eigenbases to the eigenvalues $\lambda_1 = \frac{1}{\sqrt{5}}$ (left) and $\lambda_3 = \frac{-1}{\sqrt{5}}$ (right) of $T(Q_w)$ for the choice $(x_1,x_2,x_3)=(\frac{1}{5},\frac{1}{5},\frac{1}{5})$. The polytope on the left hand side is the icosahedron and on the right hand side is Kepler's great icosahedron.
\label{fig:stellation}}
\end{figure}

\FloatBarrier

Instead of using the adjacency matrix, one could also use eigenvalues $\lambda$ and eigenbases of Izmestiev's Colin de Verdi\`ere matrix $M(Q)$ of a convex polytope $Q \subset \R^k$, to define \emph{$\lambda$-spectrality} of this polytope in precisely the same way. It follows from Corollary \ref{cor:specrep} that every convex $k$-polytope $Q \subset \R^k$ is $0$-spectral with respect to the matrix $M(Q)$ in Theorem \ref{thm:ivan}. In the case of vertex transitive polytopes $Q$, the kernel of the Colin de Verdi\`ere matrix $M$ agrees with the $\lambda_1$-eigenspace of the corresponding random walk matrix $T(Q)$. In the example discussed in Subsection \ref{subsec:rottet}, the eigenvalues $\lambda_1(w)$ and $\lambda_3(w)$ of $T(Q_w)$ have multiplicity $3$ for all $w \in \mathcal{M}_{\rm{icosa}}$ (see Figure \ref{fig:eigenvalues} (left)). 
Utilizing eigenbases $\phi_1,\phi_2,\phi_3$ of the lower eigenvalue $\lambda_3(w)$ (instead of $\lambda_1(w)$) for $w \in \mathcal{M}_{\rm{icosa}}$ leads to not necessarily convex polytopes, whose vertices are derived via the identity \eqref{eq:wphi-rel} and whose face-lattices agree with the face-lattices of the original polytopes $Q_w$. Similarly as in the $\lambda_1$-case, it would be interesting to investigate potential relations between extremality of such lower eigenvalues and equilaterality of such non-convex polytopes for arbitrary finite groups $\Gamma \subset O(k)$. The so-constructed polytopes are closely related to the \emph{spectral representations } discussed in \cite[Section 1.2]{IP-13}. In the case $(a,b,c) = (\phi,1,0)$ corresponding to $(x_1,x_2,x_3)=(1/5,1/5,1/5)$,
the matrix $T(Q_w)$ given in \eqref{eq:TQw} has the simple eigenvalue $1$, eigenvalues $\lambda_{1,3} = \pm 1/\sqrt{5}$ (both of multiplicity $3$) and $\lambda_2 = \lambda_4 = -1/5$ of multiplicity $5$. The polytopes obtained via orthonormal bases of the eigenspaces of $T(Q_w)$ to the eigenvalues $\lambda_1$ and $\lambda_3$ are illustrated in Figure \ref{fig:stellation}.

\end{document}